\newcommand{\jb}[1]{\left\langle #1 \right\rangle}
\newtheorem{thm}{Theorem}
\newtheorem{lm}{Lemma}
\numberwithin{equation}{section}
\numberwithin{thm}{section}
\numberwithin{lm}{section}
\title [Nonlinear wave equation with  damping and potential]
{Critical exponent for nonlinear wave equations with damping and potential terms}
\author{Masakazu Kato}
\address{M. Kato \newline
Faculty of Science and Engineering,
Muroran Institute of Technology,
Muroran 050-8585, Japan}
\email{mkato@mmm.muroran-it.ac.jp}
\author{Hideo Kubo}
\address{H. Kubo \newline
Department of Mathematics,
Faculty of Science, Hokkaido University,
Sapporo 060-0810, Japan}
\email{kubo@math.sci.hokudai.ac.jp}
\thanks{The first author was
partially supported by Grant-in-Aid for Science Research (No. 19H01795 and No. 22H00097),
JSPS. The second author was
partially supported by Grant-in-Aid for Science Research (No. 16H06339, No. 22H00097 and No. 26220702),
JSPS}
\subjclass[2020]{Primary: 35L71, Secondary: 35B33}
\keywords{semilinear wave equations, global existence, blow-up, lifespan}
\begin{document}
\begin{abstract}
The aim of this paper is to determine the critical exponent for the nonlinear wave equations with damping and potential terms of the scale invariant order, by assuming that these terms satisfy a special relation. We underline that our critical exponent is different from the one %critical exponent 
for related equations such as the nonlinear wave equation without lower order terms, only with a damping term, and only with a potential term. Moreover, we study the effect of the decaying order of initial data at spatial infinity. In fact, we prove that not only the lower order terms but also the order of the initial data affects the critical exponent, as well as the sharp upper and lower bounds of the maximal existence time of the solution.
\end{abstract}

\maketitle

\section{Introduction}
This paper is concerned with the Cauchy problem for the nonlinear wave equation 
with damping and potential terms:
\begin{align} \label{1.1}
\left\{
\begin{array}{l}
(\partial_t^2 + 2w(r) \partial_t -\Delta +V(r))U=  |U|^p
\quad \mbox{in} \ (0,T) \times {\mathbb R}^3,
\\
U(0,x)=\varepsilon f_0(x), \quad (\partial_t U)(0,x)=\varepsilon f_1(x)
\quad \mbox{for} \ x \in {\mathbb R}^3,
\end{array}
\right.
\end{align}
where $r=|x|$, $p>1$, $\varepsilon>0$, and $f_0$, $f_1$ are given functions
vanishing at spatial infinity, like
\begin{equation} \label{1.2}
f_0(x)={\mathcal{O}}(|x|^{-\kappa}), \quad f_1(x)={\mathcal{O}}(|x|^{-\kappa-1}) \ \
\mbox{as}\ \ |x| \to \infty.
\end{equation}
Here $\kappa$ is a positive constant.

The Cauchy problem for the wave equation with power-type nonlinearity has a long history.
The starting point is the study for the case where both damping and potential terms are absent.
In this case, the critical exponent has been determined for general space dimensions $n$ with $n\ge 2$.
The exponent is obtained as the positive root of 
\begin{align} \label{gSdef}
\gamma_S(p,n):=2+(n+1)p-(n-1)p^2=0
\end{align}
and denoted by $p_S(n)$. 
We call it Strauss exponent after the Strauss conjecture.
The conjecture says that if $p>p_S(n)$, then there exists uniquely a global solution to 
$$
 (\partial_t^2  -\Delta) U=  |U|^p
\quad \mbox{in} \ (0,T) \times {\mathbb R}^n$$
for sufficiently small initial data,
and that if $1<p\le p_S(n)$, then the solution blows up in finite time
even for the small initial data
 (see \cite{Str89}, \cite{Joh79}, \cite{G81}, \cite{GLS97}, \cite{DGK01}, 
\cite{YZ06}, \cite{KubOht05}, or references in \cite{G2005}).

We underline that if we add a damping term $\mu (1+|x|^{2})^{-\alpha/2}\partial_t U$
with $\mu>0$ and $0\leq \alpha \leq 1$ in the above equation, i.e.
%Next, we introduce known results for the semilinear equation with damping
\begin{align}\label{eq1}
(\partial_t^2 +\mu (1+|x|^{2})^{-\alpha/2}\partial_t-\Delta)U=|U|^p,
\quad (t,x) \in (0,\infty) \times \mathbb{R}^{n},
\end{align}
then the critical exponent drastically changes.
In fact, when $\alpha=0$, Todorova and Yordanov \cite{TY01} showed that the critical exponent
becomes the Fujita exponent $p_F(n)=1+2/n$.
This is because the solution behaves rather similar to that of 
the heat equation $\partial_t v -\Delta v=v^p$.
When $0<\alpha<1$, Ikehata, Todorova and Yordanov \cite{ITY09} obtained that the critical exponent
shifts to $p_F(n-\alpha)=1+2/(n-\alpha)$. 
For the scale invariant case $\alpha=1$, 
Li \cite{Li13} showed the non-existence result of the global solution 
for $1<p\le p_F(n-1)$ with $p_F(0)=\infty$. 
%the complete picture is not yet available.
On the one hand, an interesting blow-up result was derived by Ikeda and Sobajima \cite{IkeSob17space}
for the case where $0\leq \mu < (n-1)^2/(n+1)$, $n/(n-1) < p \leq p_S(n+\mu)$ and $n\geq 3$.
Actually, since $p_S(n)>p_F(n-1)$ and $p_S(n)$ is monotonically decreasing to 1, %for $n \ge 2$
we see that the critical exponent is of the Strauss type if $\mu$ is small.
%But it is remarkable that the shift of the critical exponent according to the size of $\mu$
%could occur, in view of the work of 
This type of shift is an analogue to the results due to 
D'Abbicco, Lucente and Reissig \cite{DAbLucRei15},  Ikeda, Sobajima \cite{IkeSob17time}, Kato, Sakuraba \cite{KatSak18}, and Lai \cite{Lai18}
in which the Cauchy problem for the wave equation with damping term of time dependent coefficient:
\begin{align*} %\label{eq.t1}
\left( \partial_t^2 + 2(1+t)^{-1} \partial_t -\Delta \right)U=  |U|^p
\quad \mbox{in} \ (0,T) \times {\mathbb R}^3
\end{align*}
was studied. Indeed, the problem admits a global solution for sufficiently small initial data 
if $p>p_S(5)$, and that the solution blows up in  finite time if $1<p\le p_S(5)$.
This means that the critical exponent $p_S(3)$ is shifted to $p_S(3+2)$, 
by virtue of the presence of the damping term.

%%%%%%%%%%
%%%%%%%%% 

On the other hand, the critical exponent does not change when we add only potential
term $V(x) U$ with a non-negative function $V$  as
$$
 (\partial_t^2  -\Delta +V(x)) U=  |U|^p
\quad \mbox{in} \ (0,T) \times {\mathbb R}^n.$$
Indeed, it was shown in \cite{GHK01} that there exists small global solutions if $n=3$, $p>p_S(3)$ and 
$V \in C_0^{\infty}(\mathbb{R}^3)$.
When $V={\mathcal{O}}(|x|^{-2-\delta})$ as $|x| \to \infty$
for some $\delta>0$, the blow-up result was obtained
by \cite{YZ06}, provided $n \geq 3$  and $1<p<p_S(n)$
(see also \cite{ST97}, \cite{K05} for potentials with small magnitude).

Therefore, it is natural to ask whether the critical exponent might change or not 
if both damping and potential terms are in presence as in \eqref{1.1}. 
In Georgiev, Kubo and Wakasa \cite{GKW19},
by assuming that $w(r)$ takes the form of $1/r$ for large values of $r$ and
that $w(r)$, $V(r)$ have the following relation:
\begin{align} \label{1.5}
V(r)=-w^\prime(r) + w(r)^2\quad \mbox{for} \ r>0,
\end{align}
the critical exponent was shown to be shifted from $p_S(3)$ to $p_S(5)$.
Namely, the effect of the damping term appears as the shift of the critical exponent.
But it is not still clear if the size of the coefficient of $w(r)$ comes into play or not. 
For this reason, we assume that $w(r)$ is a function in
$C([0,\infty)) \cap C^1(0,\infty)$ satisfying
\begin{align} \label{1.3}
2w(r)=\frac{\mu}{r} +\widetilde{w}(r),\ \
|\widetilde{w}(r)| \lesssim r^{-1-\delta} \quad \mbox{for} \ r \ge r_0
\end{align}
with some positive numbers $r_0$, $\mu \geq 0$ and $\delta >0$.
Then, our question reformulate as follows:
Does the critical exponent $p_S(3)$ shift to $p_S(3+\mu)$ for any $\mu\ge 0$? 

In this paper, we shall give an affirmative answer to this question.
To be more precise, one more issue concerning the decaying order
of the initial data.
In fact, in \cite{GKW19}, the initial data is assumed to vanish sufficiently fast
order at spatial infinity.
However, in view of the work of Asakura \cite{Asakura}, the self-similarity becomes
another important factor which determines the blow-up and global existence for
small initial data
(see also \cite{Kubota93}, \cite{AgemiTakam92}, \cite{Tsutaya94}, \cite{Kubo97}).
Namely, the global behavior would be different between the cases $\kappa \ge 2/(p-1)$ 
and $\kappa<2/(p-1)$, where $\kappa$ is the number from \eqref{1.2}.
% for the fast decreasing initial data. 
%On the other hand, the blow-up and lifespan results were shown in \cite{LLTW} without assuming 
%such a relation as \eqref{1.5}, provided that the initial data is compactly supported.
As a matter of fact, we prove a blow-up result for $1<p\le p_S(3+\mu)$ in Theorem 2.1
and a global existence result for $p>p_S(3+\mu)$ and $\kappa \ge 2/(p-1)$ in Theorem 2.2.
Moreover, we obtain lower bounds of the lifespan of the solution when
$1<p\le p_S(3+\mu)$ or $\kappa < 2/(p-1)$ in Theorem 2.3.
Finally, we show the optimality of the lifespan estimates by deriving upper bounds of lifespan
for slowly decreasing initial data in Theorem 2.4. 
%On the other hand, we are able to broaden the choice of number $\mu$ affects on the shift of the critical 
%exponent of the Strauss type, as we shall see below.
%Our aim in this paper is to generalize the result in \cite{GKW19} under the assumptions \eqref{1.5}
%and \eqref{1.3}.
%but the initial data are slowly decreasing.
In conclusion, we find from these results that our equation \eqref{1.1} has different nature
from the wave equation only with damping term, like \eqref{eq1}.

In the next section, we formulate our problem under the assumption of the radial symmetry
and describe the statements mentioned in the above, precisely.

%%%%%%%%%%%%%%%%%%%%%%%%%%%%%%%%%%%%%%
\section{Formulation of the problem and Results}
%%%%%%%%%%%%%%%%%%%%%%%%%%%%%%%%%%%%%

Since we are interested in spherically symmetric solutions
to the problem (\ref{1.1}), we set
$$
u(t,r)=r U(t,r\omega) \quad \mbox{with} \ r=|x|,~ \omega=x/|x|.
$$
Then, by the relation \eqref{1.5} we obtain
\begin{align} \label{1.6}
\left\{
\begin{array}{l}
(\partial_t -\partial_r+w(r)) (\partial_t +\partial_r+w(r)) 
u={|u|^p}/{r^{p-1}}
\quad \mbox{in} \ (0,T) \times (0,\infty),
\\
u(0,r)=\varepsilon \varphi(r),
\quad (\partial_t u)(0,r)=\varepsilon \psi(r)
\quad \mbox{for} \ r>0,\\
u(t,0)=0 \quad \mbox{for} \ t  \in (0,T),
\end{array}
\right.
\end{align}
where we put $\varphi(r)=r f_0(r)$ and $\psi(r)=r f_1(r)$.

In order to express the solution of \eqref{1.6},
we set $W(r)=\displaystyle \int_0^r w(\tau) d\tau$ for $r \ge 0$  and define
\begin{align} \label{eq.d1}
E_-(t,r,y)=
e^{-W(r)} e^{2W( 2^{-1} (y- t+r) )} e^{-W(y)} \quad
\mbox{for}\ t, r \ge 0, ~ y \ge t-r.
\end{align}
From the assumption \eqref{1.3} we can deduce 
\begin{align*} %\label{eq.le1}
e^{W(r)} \sim \langle r \rangle^{\mu/2},  \ \ r >0.
\end{align*}
Then the definition \eqref{eq.d1} of $E_-$ implies
\begin{align} \label{eq.le2}
E_-(t,r,y) \sim  \frac{\langle  r-t +y \rangle^{\mu}}{\langle r \rangle^{\mu/2} \langle y \rangle^{\mu/2}}.
\end{align}

Following the argument in \cite{GKW19}, we see that the problem \eqref{1.6} can be written
in the integral form
\begin{align} \label{eq.io0}
u(t,r)=
\varepsilon u_L(t,r)+
\frac12 \iint_{\Delta_-(t,r)} E_-(t-\sigma,r,y) \frac{|u(\sigma, y)|^p}{y^{p-1} } dy d\sigma
\end{align}
for $t>0$, $r>0$, where we have set
$$
\Delta_-(t,r)
=\{(\sigma, y) \in (0,\infty) \times (0,\infty); \ |t-r| < \sigma+y < t+r, \
\sigma-y <t-r \}.
$$
Besides, $u_L$ is the free solution defined by
\begin{align} \label{eq.io00}
u_L(t,r)=
& \frac12 \int_{|t-r|}^{t+r} E_-(t,r,y) \left( \psi(y) + \varphi^\prime(y)+ w(y)
\varphi(y) \right) dy
\\ \nonumber
& + \chi(r-t) E_-(t,r,r-t) \varphi(r-t),
\end{align}
where $\chi(s)=1$ for $s \ge 0$,
and $\chi(s)=0$ for $s<0$.

First of all, we extend the blow-up result in \cite{GKW19} where $\mu=2$
is assumed as follows.
Its proof can be found in \cite{KK22}.

\begin{thm} \label{bw2}
Suppose that \eqref{1.3} holds.
Let 
$\varphi$, $\psi \in C([0,\infty))$ satisfy
\begin{align*} %\label{bua}
\varphi (r)\equiv 0, \quad \psi(r) \ge 0  \ (\not\equiv 0) \quad \mbox{for}\ r\ge 0.
\end{align*}
If $1<p\le p_S(3+\mu)$, then
\begin{align*}
T(\varepsilon) \le 
\left\{
\begin{array}{ll}
\exp (C\varepsilon^{-p(p-1)}) &  \mbox{if} \hspace{3mm} p=p_S(3+\mu), \\
C\varepsilon^{-2p(p-1)/ \gamma_S(p,3+\mu)}
& \mbox{if} \hspace{3mm} 1<p <p_S(3+\mu).
\end{array}
\right.
\end{align*}
Here $T(\varepsilon)$ denotes the lifespan of the problem \eqref{eq.io0}.
\end{thm}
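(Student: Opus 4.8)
The plan is to establish the blow-up by an iteration argument applied to a lower bound for a suitable weighted functional of the solution, following the standard test-function/comparison scheme of Kato–Yordanov type adapted to the integral equation \eqref{eq.io0}. First I would fix nonnegative, compactly supported data with $\varphi\equiv0$ and $\psi\ge0$ ($\psi\not\equiv0$), and observe from the representation \eqref{eq.io00} that $u_L(t,r)\ge0$; combined with the positivity of the kernel $E_-$ (immediate from \eqref{eq.d1} or \eqref{eq.le2}), the integral equation \eqref{eq.io0} shows that the solution $u$ stays nonnegative on its interval of existence, and moreover $u(t,r)\ge \varepsilon u_L(t,r)$. A first lower bound then comes from plugging this into the Duhamel term once: $u(t,r)\ge \tfrac12\iint_{\Delta_-(t,r)} E_-(t-\sigma,r,y)\,\varepsilon^p u_L(\sigma,y)^p\, y^{-(p-1)}\,dy\,d\sigma$, which after using the asymptotics \eqref{eq.le2} for $E_-$ and the known lower bound for $u_L$ on an appropriate region (roughly $t-r$ bounded, $t+r$ large, i.e.\ near the light cone) produces a polynomial lower bound with an explicit power of $t$ and of $\varepsilon$.

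Next I would introduce the weighted integral functional — the natural choice being something like
\begin{align*}
G(t)=\int u(t,r)\,\phi(r)\,dr
\end{align*}
with $\phi$ a multiplier adapted to the operator $(\partial_t-\partial_r+w)(\partial_t+\partial_r+w)$, or, staying purely in the integral-equation framework, a spatial average of $u$ over the region $\{r \sim t\}$ weighted by a power of $r$ matching the $\langle r\rangle^{-\mu/2}$ factors coming from $E_-$. Using \eqref{eq.io0}, nonnegativity, and Hölder's inequality in the $r$-variable on a strip of width $\sim t$, one derives a nonlinear integral inequality of the form $G(t)\ge \varepsilon^p C \int_{t_0}^{t}\int_{t_0}^{\sigma}(\sigma-\tau)^{a}\,G(\tau)^{p}\,\tau^{b}\,d\tau\,d\sigma$ plus the first-iteration lower bound $G(t)\ge D\varepsilon^p t^{c}$. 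The exponents $a,b,c$ are where $\gamma_S(p,3+\mu)$ and the dimensional shift $3\mapsto 3+\mu$ enter: the $\langle r\rangle^{\mu}$ weights in \eqref{eq.le2} are precisely what converts the $3$-dimensional geometry into an effective dimension $3+\mu$, so the slicing computation should reproduce exactly $\gamma_S(p,3+\mu)$ in the exponent balance.

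Then comes the iteration proper: assuming inductively $G(t)\ge A_j\,(t-t_0)^{\alpha_j}$ for $t\ge t_0$, feed this into the integral inequality to obtain $G(t)\ge A_{j+1}(t-t_0)^{\alpha_{j+1}}$ with $\alpha_{j+1}=p\alpha_j+(\text{shift})$ and $A_{j+1}\ge (\text{const})\,\varepsilon^{?}\,A_j^{p}/(\text{polynomial in }\alpha_j)$. Taking logarithms and analyzing the recursion: in the subcritical case $1<p<p_S(3+\mu)$ the exponents $\alpha_j$ and $\log A_j$ both grow geometrically like $p^j$, and a lower bound on $G$ of the form $G(t)\ge \exp\big(p^j(\log(\varepsilon^{\theta}t^{\sigma})-K)\big)$ forces $T(\varepsilon)\le C\varepsilon^{-2p(p-1)/\gamma_S(p,3+\mu)}$ once $\varepsilon^{\theta}t^{\sigma}>e^{K}$; in the critical case $p=p_S(3+\mu)$ the naive recursion loses, so one must refine — inserting a logarithmic factor $(\log t)$ into the induction hypothesis, $G(t)\ge A_j(t-t_0)^{\alpha}(\log(t/t_0))^{\beta_j}$ with $\alpha$ now a fixed number (the critical-case balance makes the power of $t$ stabilize) and $\beta_j$ growing linearly — which yields the double-exponential $T(\varepsilon)\le\exp(C\varepsilon^{-p(p-1)})$.

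The main obstacle I expect is the critical case $p=p_S(3+\mu)$: getting the logarithmic improvement requires a sharp, not merely order-of-magnitude, estimate for the first iterate and for the kernel factor near the light cone, so that the $t$-power in the recursion is exactly critical and the accumulation of $\log$'s is visible; secondarily, one must be careful that the region of integration in $\Delta_-(t,r)$ over which the lower bounds for $u_L$ and for the weights are simultaneously valid is large enough (width proportional to $t$) — this is a geometric check on the light-cone strip that is routine for the free wave equation but needs the asymptotics \eqref{eq.le2} to control the $w$-dependent kernel uniformly. Since the statement says the proof is in \cite{KK22}, I would expect that reference to carry out exactly this Kato–Yordanov iteration with the effective dimension $3+\mu$, and here I would only need to reproduce the scheme, flagging \eqref{eq.le2} and the positivity of $u_L$ as the two structural inputs.
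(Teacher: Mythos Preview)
Your outline is a plausible route to the result, but it diverges from the paper's (i.e.\ \cite{KK22}'s) argument in a structural way. You propose a Kato--Yordanov style scheme: build a spatial functional $G(t)$ (either a test-function pairing or a weighted average over $\{r\sim t\}$), derive a nonlinear integral inequality for $G$, and iterate. The paper instead follows John's pointwise method directly on the integral equation \eqref{eq.io0}: after one substitution it obtains $u(t,r)\gtrsim \varepsilon^p \langle r\rangle^{-\mu/2}(t-r)^{-\eta}$ on the sector $\{0<t<2r,\ t-r\ \text{large}\}$ (this is where \eqref{eq.le2} does all the work), then sets
\[
\langle u\rangle(\rho)=\inf\bigl\{\langle y\rangle^{\mu/2}(\sigma-y)^{\eta}\,|u(\sigma,y)|:\ 0\le\sigma\le 2y,\ \sigma-y\ge\rho\bigr\},
\]
and shows $\langle u\rangle(\xi)\ge C_2\int_1^\xi (1-\beta/\xi)\,[\langle u\rangle(\beta)]^p\,\beta^{-p\eta}\,d\beta$. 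This last inequality is exactly the hypothesis of Lemma~\ref{lem:boi-1} with $\alpha=p$, $\beta=0$, $\theta=p\eta$; since $p\eta=1$ is equivalent to $p=p_S(3+\mu)$ and $1-p\eta=\gamma_S(p,3+\mu)/2$, the lemma gives both the subcritical and the critical lifespan bounds in one stroke, with no separate logarithmic iteration (compare case~(iii) in the proof of Theorem~\ref{bw1}).

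What each approach buys: the paper's pointwise method stays entirely inside the integral-equation framework, needs no adjoint operator or test function, and offloads the endgame to the ready-made ODE comparison Lemma~\ref{lem:boi-1}; your averaging approach is closer to the PDE tradition and would work, but the test-function step is awkward here because you only have the one-dimensional integral equation \eqref{eq.io0}, not the differential form, and the ``spatial average over $\{r\sim t\}$'' you fall back on is essentially a weaker version of the infimum functional the paper uses. Two small corrections to your critical-case sketch: the log-exponent $\beta_j$ in that iteration grows geometrically (roughly $\beta_{j+1}=p\beta_j+1$), not linearly; and $\exp(C\varepsilon^{-p(p-1)})$ is exponential in a power of $\varepsilon^{-1}$, not double-exponential.
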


On the other hand, when $p>p_S(3+\mu)$, we prove that the solution exists globally,
analogously to \cite{GKW19}.
But the pointwise estimate for the solution is improved as in \eqref{pe1} below,
by refining the basic estimates.
In fact, the decaying order is stronger than the one in \cite{GKW19} 
in the region away from the light cone, due to the factor $\langle t+r \rangle^{-1}$.

To state our results, we introduce the following parameters: % and notation:
\begin{align}
\nu&=\kappa-\mu/2-1,\label{dai98}\\
\eta&=(\mu/2+1)p-(\mu/2+2). \label{dai89} 
\end{align}
Besides, $p_S(n)$ denotes the positive root of \eqref{gSdef}
and we put $p_F(\kappa):=1+2/\kappa$.

\begin{thm} \label{ge1}
Let $\kappa>\mu/2$.
Suppose that \eqref{1.3} holds and 
$f_0 \in C^1([0,\infty))$, $f_1 \in C^0([0,\infty))$
satisfy
\begin{align} \label{eq.zj1d}
|f_0(r)| \le \langle r \rangle^{-\kappa}, \quad
|f_0^\prime (r)| +|f_1(r)| \le \langle r \rangle^{-\kappa-1} \ \ \mbox{for} \ r \ge 0.
\end{align}
If $p > p_S(3+\mu)$ and $p \geq p_F(\kappa)$, then there exists $\varepsilon_0>0$ so that
for any $\varepsilon \in (0,\varepsilon_0]$ the corresponding integral equation
\eqref{eq.io0} to
the problem \eqref{1.6} has a unique global solution satisfying
\begin{align}\label{pe1}
|u(t,r)| \lesssim \! \varepsilon r \, \langle r \rangle^{-\mu/2}
\times
\left\{
\begin{array}{ll}
\langle t+r \rangle^{-1-\nu} & (-1<\nu<0),\\
\langle t+r \rangle^{-1}  \Psi(t-r,t+r)
& (\nu=0),\\
\langle t+r \rangle^{-1}  \langle t-r \rangle^{-\min \{ \nu,\eta \}} 
& (\nu>1)
\end{array}
\right.
\end{align}
for $t>0$, $r>0$. Here, for $|\beta|\leq \alpha$, we put
\begin{align}
 \Psi(\beta,\alpha):=1+\log \frac{1+\alpha}{1+|\beta|}.\label{dai96}
\end{align}
\end{thm}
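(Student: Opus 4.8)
The plan is to prove Theorem \ref{ge1} by a standard contraction-mapping argument applied to the integral equation \eqref{eq.io0}, but carried out in a weighted function space whose weight is precisely the right-hand side of \eqref{pe1}. Concretely, for a parameter set $(\nu,\eta)$ as in \eqref{dai98}--\eqref{dai89}, I would define
\[
\|u\| := \sup_{t>0,\,r>0} \frac{\langle r\rangle^{\mu/2}}{r}\,w(t,r)^{-1}\,|u(t,r)|,
\]
where $w(t,r)$ denotes the bracketed weight appearing in \eqref{pe1} (case-split on $-1<\nu<0$, $\nu=0$, $\nu>1$), and let $X$ be the space of continuous functions on $(0,\infty)^2$ vanishing at $r=0$ with this norm finite. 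Writing $\Phi[u](t,r)$ for the right-hand side of \eqref{eq.io0}, the goal is to show that for $\varepsilon$ small, $\Phi$ maps the ball $\{\|u-\varepsilon u_L\|\le C\varepsilon\}$ (or $\{\|u\|\le 2C_0\varepsilon\}$) into itself and is a contraction there. Since $|\,|u|^p-|v|^p|\lesssim (|u|^{p-1}+|v|^{p-1})|u-v|$, the contraction estimate follows from the same bilinear-type bound as the self-mapping estimate, so the crux is a single a priori estimate on the Duhamel term.

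The heart of the matter is therefore the following: given $|u(\sigma,y)|\le A\,y\langle y\rangle^{-\mu/2}w(\sigma,y)$, show that
\[
I(t,r):=\iint_{\Delta_-(t,r)} E_-(t-\sigma,r,y)\,\frac{|u(\sigma,y)|^p}{y^{p-1}}\,dy\,d\sigma
\ \lesssim\ A^p\, r\,\langle r\rangle^{-\mu/2}\,w(t,r).
\]
First I would insert the pointwise bound \eqref{eq.le2} for $E_-$, namely $E_-(t-\sigma,r,y)\sim \langle r-(t-\sigma)+y\rangle^{\mu}\langle r\rangle^{-\mu/2}\langle y\rangle^{-\mu/2}$, together with $|u(\sigma,y)|^p/y^{p-1}\lesssim A^p\,y\,\langle y\rangle^{-p\mu/2}\,w(\sigma,y)^p$; the factor $\langle r\rangle^{-\mu/2}$ comes straight out, so one is left to bound $\iint_{\Delta_-}\langle r-t+\sigma+y\rangle^{\mu}\langle y\rangle^{-(p+1)\mu/2}\,y\,w(\sigma,y)^p\,dy\,d\sigma$ by $r\,w(t,r)$. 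Next I would change to characteristic coordinates $\alpha=\sigma+y$, $\beta=y-\sigma$ on $\Delta_-(t,r)$, under which the region becomes $\{|t-r|<\alpha<t+r,\ \beta<t-r\}$ (with the additional constraint from $\sigma,y>0$), the Jacobian is constant, and $r-t+\sigma+y=\alpha-(t-r)$, $y=(\alpha+\beta)/2$, $\sigma=(\alpha-\beta)/2$. In these coordinates $w(\sigma,y)$ — in each of the three cases — depends on $\alpha\pm\beta$ through $\langle\alpha+\beta\rangle$, $\langle\alpha-\beta\rangle$ and (in the $\nu=0$ case) $\Psi$, so the double integral decouples into the product of a $\beta$-integral over $(-\infty,t-r)$ and an $\alpha$-integral over $(|t-r|,t+r)$ up to harmless cross terms. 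One then estimates each one-dimensional integral by elementary calculus: the $p>p_S(3+\mu)$ hypothesis is exactly what makes $\gamma_S(p,3+\mu)>0$, which controls the $\alpha$-integral near the light cone (this is where the logarithmic factor $\Psi$ and the borderline decay $\langle t+r\rangle^{-1}$ enter), while $p\ge p_F(\kappa)$, i.e. $(p-1)\kappa\ge 2$, is what makes the $\beta$- and $\alpha$-integrals involving the initial-data decay converge and produces the $\langle t-r\rangle^{-\min\{\nu,\eta\}}$ factor in the regime $\nu>1$; the exponent $\eta$ from \eqref{dai89} is precisely the decay rate generated by integrating $w(\sigma,y)^p$ in the $\nu>1$ case, and $\min\{\nu,\eta\}$ records which of the two competing mechanisms wins.

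Before that one needs the corresponding bound on the free part: $|u_L(t,r)|\lesssim r\langle r\rangle^{-\mu/2} w(t,r)$. This follows from \eqref{eq.io00} by inserting \eqref{eq.le2} and the data bounds $|\psi(y)|+|\varphi'(y)|+|w(y)\varphi(y)|\lesssim \langle y\rangle^{-\kappa}$ (valid since $\varphi(r)=rf_0(r)$, $\psi(r)=rf_1(r)$ and \eqref{eq.zj1d}, together with $w(y)\lesssim \langle y\rangle^{-1}$ from \eqref{1.3}), then doing the single $y$-integral over $(|t-r|,t+r)$ — again a one-dimensional elementary estimate that splits at the hypothesis $\kappa>\mu/2$, i.e. $\nu>-1$ — and handling the boundary term $\chi(r-t)E_-(t,r,r-t)\varphi(r-t)$, which is supported in $r>t$ and bounded using $E_-(t,r,r-t)\sim\langle r\rangle^{\mu/2}\langle r-t\rangle^{-\mu/2}$ and $|\varphi(r-t)|\lesssim (r-t)\langle r-t\rangle^{-\kappa}$.

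I expect the main obstacle to be the bookkeeping in the Duhamel estimate $I(t,r)\lesssim A^p\,r\langle r\rangle^{-\mu/2}w(t,r)$: one has to treat the three cases of $w$ separately, keep track of the interaction between the $\Psi$-logarithm and the polynomial weights across the characteristic change of variables, and verify that the borderline cases $\nu=0$ and $\eta=\nu$ close with the correct powers and logarithms rather than leaking an extra $\log$ or an $\varepsilon$-power. In particular the $\nu=0$ self-improvement — showing $\iint$ of the $p$-th power of a $\langle t+r\rangle^{-1}\Psi$ weight reproduces $\langle t+r\rangle^{-1}\Psi$ and not $\langle t+r\rangle^{-1}\Psi^p$ — is the delicate point, and is precisely where $p>p_S(3+\mu)$ (strict inequality) is used; the sub-light-cone decay $\langle t-r\rangle^{-\min\{\nu,\eta\}}$ for $\nu>1$ then falls out by the same computation once one notes that integrating the weight over the characteristic strip trades the $\langle t-r\rangle$-power for the better of the data-rate $\nu$ and the damping-generated rate $\eta$.
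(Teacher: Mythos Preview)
Your outline is correct and matches the paper's proof essentially step for step: the paper defines precisely your weighted norm (their $w_1$, $\|\cdot\|_1$ in \eqref{weight1}--\eqref{norm1}), bounds $u_L$ in it (Lemma~\ref{fil1}), proves the Duhamel self-mapping estimate via the same characteristic coordinates $\alpha=\sigma+y$, $\beta=y-\sigma$ (Lemmas~\ref{lem4} and \ref{lem6}, with the one-dimensional integrals handled by Lemmas~\ref{lem1}--\ref{lem3}), and closes by Picard iteration. One slip and one organizational point: $p>p_S(3+\mu)$ makes $\gamma_S(p,3+\mu)<0$, not $>0$ (equivalently $p\eta>1$, which is what gives convergence of the inner integral); and the paper obtains the $\min\{\nu,\eta\}$ not by a separate case analysis but by replacing $\kappa$ with $\tilde\kappa=\min\{\kappa,(\mu/2+1)p-1\}$ at the outset, so that the effective $\tilde\nu=\min\{\nu,\eta\}\le\eta$ and Lemma~\ref{lem6} applies directly.
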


When either $1<p\le p_S(3+\mu)$ or $1<p<p_F(\kappa)$,
we obtain the following lower bounds of the lifespan.

\begin{thm} \label{bw3}
Let $\kappa>\mu/2$. % and set $\kappa_1=\mu/2+1+1/p$.
Suppose that \eqref{1.3} holds.
Let $f_0 \in C^1([0,\infty))$, $f_1 \in C^0([0,\infty))$
satisfy \eqref{eq.zj1d}.
If $1< p \leq p_S(3+\mu)$ or $1 < p < p_F(\kappa)$, then there exist $C>0$ and $\varepsilon_0>0$ such
that for any $\varepsilon\in (0,\varepsilon_0]$
\begin{align}
T(\varepsilon)\geq
\left\{
\begin{array}{ll}
\exp(C \varepsilon^{-p(p-1)}) & (p=p_S(3+\mu) \ \mbox{and} \ p \nu>1), \\ %\kappa>\kappa_1),\\
C\varepsilon^{-2p(p-1)/\gamma_S(p,3+\mu)} & (1<p<p_S(3+\mu) \ \mbox{and} \ p \nu>1), \\ %\kappa>\kappa_1),\\
\exp(C \varepsilon^{-(p-1)}) & (p=p_S(3+\mu)  \ \mbox{and} \ p \nu=1),\\
Cb(\varepsilon) & (1<p<p_S(3+\mu) \ \mbox{and} \ p \nu=1), \\ %\kappa = \kappa_1),\\
C\varepsilon^{-(p-1)/\gamma_F(p,\kappa)} & (1<p<p_F(\kappa) \ \mbox{and} \ p\nu<1). %\kappa < \kappa_1).
\end{array}
\right.\label{llifespan}
\end{align}
Here we put $\gamma_F(p,\kappa)=2-(p-1)\kappa$, and $b(\varepsilon)$ is defined by
\begin{align}
	 b(\varepsilon)
        \left\{\log(1+ b(\varepsilon) )\right\}^{2(p-1)/\gamma_S(p,3+\mu)}
      =\varepsilon^{-2p(p-1)/\gamma_S(p,3+\mu)}.
\label{bdef}
\end{align}
\end{thm}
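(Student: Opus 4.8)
To prove Theorem \ref{bw3}, the plan is to solve the integral equation \eqref{eq.io0} on each truncated slab $[0,T]\times(0,\infty)$ by a contraction mapping argument and to take, as a lower bound for the lifespan $T(\varepsilon)$, the supremum of those $T$ for which the argument closes. For this I would work in a weighted $L^\infty$ space modelled on \eqref{pe1}: having fixed which of the five regimes of \eqref{llifespan} we are in, choose a weight
\[
\Phi(t,r)=r\,\langle r\rangle^{-\mu/2}\,\langle t+r\rangle^{-1}\,\langle t-r\rangle^{-a}\,\Psi(t-r,t+r)^{\,b}
\]
with exponents $a,b\ge 0$ dictated by $\nu$, $\eta$ and by the algebraic identity $\gamma_S(p,3+\mu)=2-2p\eta$ (so that $p=p_S(3+\mu)$ is the same as $p\eta=1$, and, in that case, $p\nu=1$ is the same as $\nu=\eta$), and let $X_T=\{u\in C([0,T]\times[0,\infty)):\|u\|_T:=\sup_{0<t<T,r>0}\Phi(t,r)^{-1}|u(t,r)|<\infty,\ u(t,0)=0\}$. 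The hypothesis $\kappa>\mu/2$, i.e.\ $\nu>-1$, is precisely what makes the weight integrable enough for the free part to belong to $X_T$.

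The argument then has the usual two ingredients. First, using \eqref{eq.io00}, the kernel asymptotics \eqref{eq.le2} and the decay hypothesis \eqref{eq.zj1d} (recall $\varphi=rf_0$, $\psi=rf_1$), one shows $|\varepsilon u_L(t,r)|\le C_0\varepsilon\,\Phi(t,r)$. Second, for the Duhamel operator $L[u](t,r)=\tfrac12\iint_{\Delta_-(t,r)}E_-(t-\sigma,r,y)\,y^{-(p-1)}|u(\sigma,y)|^p\,dy\,d\sigma$ one proves a weighted bound
\[
\|L[u]\|_T\le C_1 D(T)\|u\|_T^{\,p},\qquad \|L[u]-L[v]\|_T\le C_1 D(T)\bigl(\|u\|_T^{p-1}+\|v\|_T^{p-1}\bigr)\|u-v\|_T,
\]
where the growth factor $D(T)$ is: a power $T^{\gamma_S(p,3+\mu)/(2p)}$ if $1<p<p_S(3+\mu)$ and $p\nu>1$; a power of $\log\langle T\rangle$ (exponent $1/p$ if $p\nu>1$, exponent $1$ if $p\nu=1$) if $p=p_S(3+\mu)$; the product $T^{\gamma_S(p,3+\mu)/(2p)}(\log\langle T\rangle)^{(p-1)/p}$ if $1<p<p_S(3+\mu)$ and $p\nu=1$; and a power $T^{\gamma_F(p,\kappa)}$ if $1<p<p_F(\kappa)$ and $p\nu<1$. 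Granting these, $u\mapsto\varepsilon u_L+L[u]$ is a contraction on the ball $\{\|u\|_T\le 2C_0\varepsilon\}\subset X_T$ exactly when $C_1 D(T)(2C_0\varepsilon)^{p-1}\le\tfrac12$; solving this inequality for $T$ yields each of the five bounds in \eqref{llifespan}, the implicitly defined $b(\varepsilon)$ of \eqref{bdef} arising from the mixed borderline case after raising the closure inequality to the power $2p/\gamma_S(p,3+\mu)$.

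The heart of the matter — and the part that goes beyond \cite{GKW19}, where only fast-decaying data with $\mu=2$ was treated — is the weighted Duhamel estimate with the sharp $D(T)$ in each regime. I would pass to the characteristic coordinates $\xi=y+\sigma$, $\zeta=y-\sigma$, which turns $\Delta_-(t,r)$ into the region $\{|t-r|<\xi<t+r,\ r-t<\zeta<\xi\}$ and, via \eqref{eq.le2}, replaces $E_-(t-\sigma,r,y)$ by a quantity comparable to $\langle\xi-(t-r)\rangle^{\mu}\langle r\rangle^{-\mu/2}\langle(\xi+\zeta)/2\rangle^{-\mu/2}$, after which the $\xi$- and $\zeta$-integrations against $\Phi^{\,p}/y^{p-1}$ can be carried out in succession. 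The delicate points are the logarithmic divergences that appear exactly at $p=p_S(3+\mu)$ (these force the $\Psi$-factor in $\Phi$ and the powers of $\log\langle T\rangle$ in $D(T)$, hence the exponents $\varepsilon^{-p(p-1)}$ and $\varepsilon^{-(p-1)}$), the threshold $p\nu=1$ (where $\nu$ and $\eta$ collide and an additional logarithm appears, producing $b(\varepsilon)$), the separate treatment of the region near the light cone $t=r$ versus the interior and exterior, and the singular factor $y^{-(p-1)}$ at $y=0$, which is harmless because the $r$ in $\Phi$ records the boundary condition $u(t,0)=0$. Everything else is the standard Strauss-type iteration adapted to the kernel $E_-$; keeping all the powers of the logarithm right in each region is the main bookkeeping burden.
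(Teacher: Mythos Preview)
Your scheme is correct for the last case ($p\nu<1$), and there the paper does exactly what you describe: a direct contraction on $u$ with the weight $w_1$ of \eqref{weight1} and growth factor $D_1(T)=(1+T)^{\gamma_F(p,\kappa)}$; the closure $D_1(T)\varepsilon^{p-1}\lesssim 1$ gives $T(\varepsilon)\gtrsim\varepsilon^{-(p-1)/\gamma_F}$. But for the four cases with $p\nu\ge 1$ there is a genuine gap. The growth factors you announce, namely $T^{\gamma_S/(2p)}$, $(\log T)^{1/p}$, and $T^{\gamma_S/(2p)}(\log T)^{(p-1)/p}$, are \emph{not} what a direct contraction on $u$ actually produces: they are the $p$-th roots of the true growth factors, reverse-engineered from the answers you want. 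The obstruction is that when $\nu>\eta$ the Duhamel operator does not reproduce the fast characteristic decay $\langle t-r\rangle^{-\nu}$ carried by $u_L$; its output only decays like $\langle t-r\rangle^{-\eta}$. So iterating with the weaker weight (the paper's $w_2$ in \eqref{weight2}) one finds $D(T)=D_2(T)$ as in \eqref{defD2}, and the closure $D_2(T)\varepsilon^{p-1}\lesssim 1$ gives only $T\gtrsim\varepsilon^{-2(p-1)/\gamma_S}$ or $\exp(C\varepsilon^{-(p-1)})$, off by a factor of $p$ in the exponent from \eqref{llifespan}.

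The missing idea --- which the paper flags as essential --- is to linearize around the free solution: set $v=u-\varepsilon u_L$ and solve \eqref{dai92} for $v$ in a space built on $w_2$, while keeping $u_L$ measured in the stronger norm built on $w_1$. The key gain is Lemma~\ref{lem8}: because $u_L$ enjoys the faster decay $w_1$, the first Picard iterate $I_-(|\varepsilon u_L|^p/r^{p-1})$ has size $O(\varepsilon^p)$ in $\|\cdot\|_2$ \emph{with no $T$-dependent factor}. The nonlinear self-interaction of $v$ does carry the factor $D_2(T)$ (Lemma~\ref{lem9}), but now the closure condition reads $D_2(T)\varepsilon^{p(p-1)}\lesssim 1$ as in \eqref{dai93}, and this extra $\varepsilon^{(p-1)^2}$ is exactly what upgrades the bounds to $\exp(C\varepsilon^{-p(p-1)})$, $\varepsilon^{-2p(p-1)/\gamma_S}$, and $b(\varepsilon)$. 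Your single-weight, single-norm setup cannot see this gain.
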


We remark that $b(\varepsilon)$ is well-defined, because $\gamma_S(p,3+\mu)>0$
for $1<p<p_S(3+\mu)$.
Moreover, it is easy to see that $b(\varepsilon)$ is a decreasing function 
and tends to $\infty$ as $\varepsilon\to 0+0$. 
Also, we notice that the last case where $1<p<p_F(\kappa)$ and $p\nu<1$
includes the case where $1<p\le p_S(3+\mu)$ and $p\nu<1$.

To conclude the optimality of the lower bounds with respect to $\varepsilon$,
the upper bounds given in Theorem \ref{bw2} are not enough for the last three cases.
Indeed, %it holds that 
both $b(\varepsilon)$ and $\varepsilon^{-(p-1)/\gamma_F(p,\kappa)}$
is smaller than $\varepsilon^{-2p(p-1)/\gamma_S(p,3+\mu)}$,
if we choose $\varepsilon$ is suitably small
and $p \nu \le 1$, because
$$
\frac{\gamma_S(p,3+\mu)}{2p(p-1)}=\frac{\gamma_F(p,\kappa)}{p-1}+\frac{p\nu -1}{p}.
$$
However, the following result tells us the optimality in those cases.

\begin{thm} \label{bw1}
Suppose that \eqref{1.3} holds.
Let $\varphi$, $\psi \in C([0,\infty))$ satisfy
\begin{align} \label{hyp1}
\varphi (r)\equiv 0, \quad \psi(r) \ge (1+r)^{-\kappa}
\quad \mbox{for}\ r\ge 0
\end{align}
for some $\kappa>0$. % \leq \kappa_1$. 
If either $1<p\le p_S(3+\mu)$ and $p \nu=1$, or $1<p<p_F(\kappa)$
and $p \nu<1$, % or $p=p_F(\kappa)=p_S(3+\mu)$,
then there exist %$C>0$ such that
$C>0$ and $\varepsilon_0>0$ such that for any $\varepsilon\in (0,\varepsilon_0]$ 
\begin{align*}
T(\varepsilon)\le
\left\{
\begin{array}{ll}
\exp(C \varepsilon^{-(p-1)}) & (p=p_S(3+\mu)  \ \mbox{and} \ p \nu=1), \\
   %(p=p_F(\kappa)=p_S(3+\mu)),\\
Cb(\varepsilon) & (1<p<p_S(3+\mu) \ \mbox{and} \ p \nu=1), \\ 
   %(1<p<p_F(\kappa) \ \mbox{and} \ \kappa = \kappa_1),\\
C\varepsilon^{-(p-1)/\gamma_F(p,\kappa)} & (1<p<p_F(\kappa) \ \mbox{and} \ p \nu<1). %\kappa < \kappa_1).
\end{array}
\right.
\end{align*}
\end{thm}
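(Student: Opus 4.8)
The plan is to run a slicing/iteration argument directly on the integral equation \eqref{eq.io0}, exploiting the pointwise lower bound \eqref{eq.le2} for the kernel $E_-$ together with the hypothesis $\psi(r)\ge(1+r)^{-\kappa}$ from \eqref{hyp1}. Since $\varphi\equiv0$, the free term reduces to
$$
u_L(t,r)=\frac12\int_{|t-r|}^{t+r}E_-(t,r,y)\,\psi(y)\,dy
\ \gtrsim\ \frac{1}{\langle r\rangle^{\mu/2}}\int_{|t-r|}^{t+r}\frac{\langle r-t+y\rangle^{\mu}}{\langle y\rangle^{\mu/2+\kappa}}\,dy ,
$$
and a direct estimate of this integral in a forward subregion of the light cone gives a first lower bound for $u$ proportional to $\varepsilon\,r\,\langle r\rangle^{-\mu/2}$ times negative powers of $\langle t+r\rangle$ and $\langle t-r\rangle$ governed by $\nu=\kappa-\mu/2-1$. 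Substituting this once more into the Duhamel term of \eqref{eq.io0}, the factor $\langle t-r\rangle^{-p\nu}$ produced by the nonlinearity integrates to a power of $\langle t-r\rangle$ when $p\nu\ne1$ and to a logarithm when $p\nu=1$; this already isolates the three regimes of the statement and yields a clean ``seed'' estimate
$$
u(t,r)\ \ge\ M_1\,r\,\langle r\rangle^{-\mu/2}\,\langle t+r\rangle^{-a_1}\langle t-r\rangle^{b_1}\bigl(\log\langle t+r\rangle\bigr)^{c_1}
$$
on a slice $\mathcal R_1$ of the forward cone, with $c_1>0$ exactly when $p\nu=1$ and $c_1=0$ otherwise.

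The heart of the proof is the iteration. Assuming a lower bound of the above shape on a slice $\mathcal R_j$, one inserts it into
$$
u(t,r)\ \ge\ \frac12\iint_{\Delta_-(t,r)\cap\mathcal R_j}E_-(t-\sigma,r,y)\,\frac{u(\sigma,y)^{p}}{y^{p-1}}\,dy\,d\sigma ,
$$
applies \eqref{eq.le2} to $E_-(t-\sigma,r,y)$, passes to the characteristic variables $\alpha=\sigma+y$, $\beta=\sigma-y$, and performs the resulting two-dimensional integral over a slightly smaller slice $\mathcal R_{j+1}\subset\mathcal R_j$. This produces recursions for the exponents, schematically of the form $a_{j+1}=p\,a_j-\text{(gain)}$, $b_{j+1}=p\,b_j+\text{(gain)}$, $c_{j+1}=p\,c_j+(\text{const}\ge0)$, together with $M_{j+1}\ge D\,j^{-q}\,\langle t\rangle^{\theta}M_j^{\,p}$ for explicit constants $\theta\ge0$, $q>0$, $D>0$. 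The sizes of the ``gains'' and of $\theta$ are dictated by $\gamma_S(p,3+\mu)$ and by $\gamma_F(p,\kappa)=2-(p-1)\kappa$, and the algebraic identity
$$
\frac{\gamma_S(p,3+\mu)}{2p(p-1)}=\frac{\gamma_F(p,\kappa)}{p-1}+\frac{p\nu-1}{p}
$$
recorded above is precisely what reconciles the three regimes.

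It remains to solve the recursion and read off the blow-up. If $1<p<p_F(\kappa)$ and $p\nu<1$, then $a_j$, $b_j$ stabilise and $c_j\equiv0$, so $\log M_j$ obeys $\log M_{j+1}\ge p\log M_j+\log(Dj^{-q}\langle t\rangle^{\theta})$ with $\theta=\gamma_F(p,\kappa)>0$; summing the geometric series shows that the iterated lower bounds for $u(t,r)$ diverge at points with $t$ comparable to $\varepsilon^{-(p-1)/\gamma_F(p,\kappa)}$, so no solution can exist that long and $T(\varepsilon)\le C\varepsilon^{-(p-1)/\gamma_F(p,\kappa)}$. If $p\nu=1$, the powers of $\log\langle t+r\rangle$ accumulate through the iteration, and the same kind of summation — now involving $\sum_j p^{-j}c_j$ and $\sum_j p^{-j}\log(Dj^{-q})$ — forces $T\{\log(1+T)\}^{2(p-1)/\gamma_S(p,3+\mu)}\lesssim\varepsilon^{-2p(p-1)/\gamma_S(p,3+\mu)}$ when $1<p<p_S(3+\mu)$, which is exactly the relation \eqref{bdef} defining $b(\varepsilon)$, so $T(\varepsilon)\le Cb(\varepsilon)$; and when $p=p_S(3+\mu)$, where $\gamma_S(p,3+\mu)=0$ and the factor $\langle t\rangle^{\theta}$ degenerates into a power of $\log\langle t\rangle$, it forces $\log\log T\lesssim\varepsilon^{-(p-1)}$, i.e. $T(\varepsilon)\le\exp(C\varepsilon^{-(p-1)})$.

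The main obstacle is the bookkeeping of the iteration once logarithms are present. One must choose the slices $\mathcal R_j\subset\Delta_-(t,r)$ so that the multiplicative constant lost at each step is only polynomially small in $j$ — so that $\sum_j p^{-j}\log(Dj^{-q})$ converges and does not spoil the exponent — and one must propagate the $\log\langle t+r\rangle$ powers (and, in the doubly critical case $p=p_S(3+\mu)$, $p\nu=1$, the precise constants) accurately enough to recover the sharp lifespan estimates of the statement rather than anything weaker. Carrying out the characteristic-coordinate integral of $E_-$ against the iterated profile and checking that the exponent ``gains'' are exactly those prescribed by $\gamma_S(p,3+\mu)$ and $\gamma_F(p,\kappa)$ is the other technical core; both estimates are of the same nature as, but somewhat more delicate than, the ones behind Theorems \ref{bw2} and \ref{ge1}.
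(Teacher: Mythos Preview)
Your broad plan---iterate pointwise lower bounds for $u$ via the Duhamel term in \eqref{eq.io0} and the kernel bound \eqref{eq.le2}---is right, and for the polynomial regime $1<p<p_F(\kappa)$, $p\nu<1$ it agrees with the paper almost verbatim.  There the paper works on the fixed wedge $\Sigma=\{t-r\ge T_0,\ t\le 2r\}$ with ansatz $u(t,r)\ge C_n(t-r-T_0)^{a_n}/(r^{\mu/2}(t-r)^{b_n})$, proves the recursion $a_{n+1}=pa_n+2$, $b_{n+1}=pb_n+(\mu/2+1)(p-1)$, $C_{n+1}\ge C\,C_n^{p}/(pa_n+2)^2$ as a clean lemma, and reads off $T(\varepsilon)\lesssim\varepsilon^{-(p-1)/\gamma_F(p,\kappa)}$ exactly as you describe.

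The two $p\nu=1$ cases are handled quite differently in the paper, and your log-accumulation scheme is not what is done there.  For $1<p<p_S(3+\mu)$ with $p\nu=1$, the paper performs the Duhamel step \emph{once} to produce $\int_1^{t-r}(t-r-\beta)\beta^{-1}\,d\beta$, and then bounds this below by the \emph{constant} $\tfrac12(t-r)\log(T_0/2)$ on $\Sigma$.  From that point on the iteration is \emph{log-free}: the seed is $C_0=\widetilde M\varepsilon^p\log(T_0/2)$ with $a_0=1$, $b_0=(\mu/2+1)(p-1)$, and the same recursion as in the first case is run.  Only at the very end is $T_0$ chosen---as $T_0\sim b(\varepsilon)$---so that the blow-up function $\widetilde J(\tau,\tau/2)\gtrsim\varepsilon^p T_0^{\gamma_S(p,3+\mu)/(2(p-1))}\log(T_0/2)$ reproduces \eqref{bdef} directly.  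This ``freeze one logarithm as a free parameter'' device completely sidesteps the bookkeeping you flag as the main obstacle.  Your scheme with $c_{j+1}=pc_j+\mathrm{const}$ may also be made to work, but note $c_j\sim p^{j}/(p-1)$, so any multiplicative loss of the form $C^{c_j}$ with $C<1$ (which arises whenever one compares $\log\langle\sigma+y\rangle$ to $\log\langle t+r\rangle$ across the integration domain) contributes $\sim p^{j}\log C$ to $\log M_j$ and destroys the summation; avoiding this demands more care than your sketch provides.

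For the doubly critical case $p=p_S(3+\mu)$, $p\nu=1$, the paper abandons the pointwise iteration altogether.  It sets $\langle u\rangle(\rho)=\inf\{\langle y\rangle^{\mu/2}(\sigma-y)^{\eta}|u(\sigma,y)|:\ \sigma\le 2y,\ \sigma-y\ge\rho\}$, shows via one Duhamel step that $\langle u\rangle(y)\ge C_2\int_1^y(1-\xi/y)\,\xi^{-1}\,[\langle u\rangle(\xi)]^p\,d\xi$ (here $p\eta=1$), and then invokes the ODE-type blow-up criterion Lemma~\ref{lem:boi-1} with $\alpha=1$, $\beta=0$, $\theta=1$.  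Your proposal that ``$\langle t\rangle^{\theta}$ degenerates into a power of $\log\langle t\rangle$'' is vague by comparison, and your stated conclusion ``$\log\log T\lesssim\varepsilon^{-(p-1)}$'' is one logarithm too many: $T\le\exp(C\varepsilon^{-(p-1)})$ corresponds to $\log T\lesssim\varepsilon^{-(p-1)}$.
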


This paper is organized as follows.
In the section 3, we give preliminary facts.
In the section 4, we derive a priori upper bounds and complete the proofs of Theorem \ref{ge1} 
and Theorem \ref{bw3}.
In particular, it is essential to linearize the problem around the free solution
from the nonlinearity as \eqref{dai92}.
The section 5 is devoted to the proof of a blow-up result given in Theorem \ref{bw1}
based on the argument due to John \cite{Joh79}.

%%%%%%%%%%%%%%%%%%%%%%%%%%%%%%%%
\section{Preliminaries}
%%%%%%%%%%%%%%%%%%%%%%%%%%%%%%%%
In this section we prepare lemmas 
which will be used in the proofs of the theorems.
For the proofs of Lemma \ref{lem1} and Lemma \ref{lem3.1}, 
see \cite{KK22}, Lemma 3 and Lemma 4, respectively.
%%%%%%%%%%%%%%%%%%%%%%%%%%%%%%%%%%%%%%%%%%
\begin{lm}\label{lem1}
Let $0\leq a \leq b$ and $k \in \mathbb{R}$. Then we have
\begin{align*}%\label{dai26}
	\int_a^b \langle x \rangle^{-k}dx
	\lesssim (b-a) \times
	\left\{
	\begin{array}{ll}
	\langle b \rangle^{-k} & (k<1),\\
	\langle b \rangle ^{-1} \Psi(a,b) & (k=1),\\
	\langle b \rangle^{-1} \langle a \rangle^{-k+1} & (k>1).
	\end{array}
	\right.
\end{align*}
\end{lm}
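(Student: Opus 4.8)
\emph{Proof of Lemma~\textup{\ref{lem1}} (sketch).}

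The plan is to reduce the estimate to the elementary integral $\int_a^b(1+x)^{-k}\,dx$, using that $\langle x\rangle$ and $1+x$ are comparable for $x\ge0$, and then to split into two regimes according to the size of the length $\ell:=b-a$ relative to $1+b$. I would distinguish the \emph{long} case $1+b\le 2\ell$ and the \emph{short} case $1+b>2\ell$; in the latter one has $1+a=(1+b)-\ell>(1+b)/2$, so $1+x$ is comparable to $1+b$ uniformly for $x\in[a,b]$.

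In the short case the integrand is essentially constant on $[a,b]$: since $(1+x)^{-k}\sim(1+b)^{-k}\sim(1+a)^{-k}$ there, one gets $\int_a^b(1+x)^{-k}\,dx\lesssim\ell\,(1+a)^{-k}$, and rewriting $(1+a)^{-k}$ as $(1+b)^{-k}$ when $k<1$, as $(1+a)^{-1}(1+a)^{1-k}\sim(1+b)^{-1}(1+a)^{1-k}$ when $k>1$, or as $(1+a)^{-1}\sim(1+b)^{-1}\le(1+b)^{-1}\Psi(a,b)$ when $k=1$, produces the three stated bounds. In the long case $\ell\,\langle b\rangle^{-1}\gtrsim1$, so it is enough to dominate the integral by $\langle b\rangle^{-k}$, by $\langle b\rangle^{-1}\Psi(a,b)$, and by $\langle a\rangle^{1-k}$ respectively. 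For $k<1$ this follows from $\int_a^b(1+x)^{-k}\,dx\le(1-k)^{-1}(1+b)^{1-k}=(1-k)^{-1}(1+b)(1+b)^{-k}\lesssim\ell\,(1+b)^{-k}$; for $k>1$ from $\int_a^b(1+x)^{-k}\,dx\le(k-1)^{-1}(1+a)^{1-k}\lesssim\ell\,(1+b)^{-1}(1+a)^{1-k}$; and for $k=1$ from $\int_a^b(1+x)^{-1}\,dx=\log\frac{1+b}{1+a}\le\Psi(a,b)\lesssim\ell\,(1+b)^{-1}\Psi(a,b)$.

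I do not expect a genuine obstacle; the only point requiring care is the borderline exponent $k=1$, where the logarithmic growth of $\int(1+x)^{-1}\,dx$ is what forces the extra factor $\Psi(a,b)$. There one uses only $\Psi(a,b)\ge1$ and $\Psi(a,b)\ge\log\frac{1+b}{1+a}$: in the short case the bound comes from the pointwise estimate $(1+x)^{-1}\le(1+a)^{-1}\sim(1+b)^{-1}$, with a harmless spare factor $\Psi\ge1$; in the long case the integral equals $\log\frac{1+b}{1+a}\le\Psi(a,b)$, which is absorbed since $\ell\,\langle b\rangle^{-1}\gtrsim1$. The implicit constants produced this way may depend on $k$ (through factors such as $|1-k|^{-1}$), which is immaterial for the statement as given.
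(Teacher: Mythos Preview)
Your argument is correct. The paper refers to \cite{KK22} for the proof, and the argument there differs from yours only in how the factor $b-a$ is extracted. For $k\neq 1$ the paper computes the antiderivative exactly and then applies the elementary inequality
\[
1-s^{\,l}\le \max\{1,l\}\,(1-s)\qquad (0\le s\le 1,\ l\ge 0)
\]
with $s=(1+a)/(1+b)$, which yields the factor $(b-a)\langle b\rangle^{-1}$ in one stroke without any case distinction; for $k=1$ it splits into the three subcases $a\ge b/2$, $a\le b/2$ with $b\ge 1$, and $0<b\le 1$. Your long/short dichotomy $1+b\lessgtr 2(b-a)$ replaces both of these devices by a single uniform split, which is arguably cleaner (especially at $k=1$, where you need only two subcases instead of three). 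The two routes are equally elementary and give the same $k$-dependent implicit constants (of order $|1-k|^{-1}$ as $k\to 1$), so neither has a real advantage over the other.
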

\begin{lm}\label{lem3.1}
Let $k_1, k_2, k_3 \geq 0$ and $\alpha \geq0$. Then we have
\begin{align*}
\int_{-\alpha}^{\alpha} \langle \alpha + \beta \rangle^{-k_1-k_2}
\jb{\beta}^{-k_1-k_3} d\beta
\lesssim
\jb{\alpha}^{-k_1}  \times
\left\{
\begin{array}{ll}
\Phi_{1-(k_1+k_2+k_3)}(\alpha)
& (k_1+k_2+k_3 \neq 1),\\
\log(2+\alpha) & (k_1+k_2+k_3 = 1).
\end{array}
\right.
\end{align*}
Here, we put
\begin{align}\label{dai104}
	\Phi_{\rho}(s):=\max \{1,\langle s \rangle^{\rho}\}
	\quad (s,\rho \in \mathbb{R}).
\end{align}
\end{lm}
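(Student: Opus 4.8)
The plan is to split the interval of integration into two subintervals on each of which one of the two weights $\jb{\alpha+\beta}$, $\jb{\beta}$ is comparable to $\jb{\alpha}$, to reduce the remaining one-variable integral to Lemma~\ref{lem1}, and to combine the resulting two contributions by an elementary case analysis.

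Concretely, I would write $[-\alpha,\alpha]=[-\alpha,-\alpha/2]\cup[-\alpha/2,\alpha]$. On $[-\alpha,-\alpha/2]$ one has $\alpha/2\le|\beta|\le\alpha$, hence $\jb{\beta}\sim\jb{\alpha}$; extracting this factor and substituting $s=\alpha+\beta\in[0,\alpha/2]$ bounds the corresponding part of the integral by $\jb{\alpha}^{-k_1-k_3}\int_0^{\alpha/2}\jb{s}^{-k_1-k_2}\,ds$. On $[-\alpha/2,\alpha]$ one has $\alpha/2\le\alpha+\beta\le2\alpha$, hence $\jb{\alpha+\beta}\sim\jb{\alpha}$; extracting this factor and splitting the $\beta$-integral at $0$ (and using that $\jb{\cdot}$ is even) bounds that part by $\jb{\alpha}^{-k_1-k_2}\int_0^{\alpha}\jb{\beta}^{-k_1-k_3}\,d\beta$. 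To each of these inner integrals Lemma~\ref{lem1} applies with $a=0$ and $b\in\{\alpha/2,\alpha\}$, for which $\jb{b}\sim\jb{\alpha}$ and $\Psi(0,b)\sim\log(2+\alpha)$; depending on whether $k_1+k_2$ (respectively $k_1+k_3$) is $<1$, $=1$ or $>1$, this produces a factor $\jb{\alpha}^{1-k_1-k_2}$, $\log(2+\alpha)$ or $1$ (respectively the same with $k_2$ and $k_3$ interchanged).

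It then remains to add the two resulting bounds and to check that their sum is dominated by $\jb{\alpha}^{-k_1}\Phi_{1-(k_1+k_2+k_3)}(\alpha)$ when $k_1+k_2+k_3\neq1$, and by $\jb{\alpha}^{-k_1}\log(2+\alpha)$ when $k_1+k_2+k_3=1$. Since the two contributions are mirror images of each other (they differ only by the exchange $k_2\leftrightarrow k_3$, and the claimed bound is symmetric in $k_2,k_3$), it is enough to treat one of them. The points to verify are: (i) the inequality $\jb{\alpha}^{-k_1-k_2}\jb{\alpha}^{1-k_1-k_3}=\jb{\alpha}^{1-2k_1-k_2-k_3}\le\jb{\alpha}^{-k_1}\Phi_{1-(k_1+k_2+k_3)}(\alpha)$, which holds unconditionally because $1-(k_1+k_2+k_3)\le\max\{0,1-(k_1+k_2+k_3)\}$; (ii) a surviving logarithm arises only in the borderline case $k_1+k_j=1$, and there it is either exactly the asserted bound (when the third exponent vanishes) or gets absorbed into a spare factor $\jb{\alpha}^{-k_\ell}$ with $k_\ell>0$; and (iii) the cases $k_1+k_j>1$ only contribute terms $\lesssim\jb{\alpha}^{-k_1}$. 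The range $\alpha\lesssim1$ is trivial, since then all weights and $\Phi_{1-(k_1+k_2+k_3)}(\alpha)$ are $\sim1$ while the integral is $\le2\alpha$.

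I expect the only genuinely delicate point to be the bookkeeping in (i)--(iii): one must make sure the extra decay $\jb{\alpha}^{-k_1}$ is really captured, not merely the weaker $\jb{\alpha}^{-k_1}\Phi_{1-(k_2+k_3)}(\alpha)$ one would get by naively pulling $\jb{\alpha}^{-k_1}$ out of the product $\jb{\alpha+\beta}^{-k_1}\jb{\beta}^{-k_1}$. The gain comes precisely from the overlap $[\alpha/2,\alpha]\subset[-\alpha/2,\alpha]$, on which \emph{both} weights are simultaneously $\sim\jb{\alpha}$, so that the length-$\alpha$ integration there contributes only $\alpha\,\jb{\alpha}^{-2k_1-k_2-k_3}\lesssim\jb{\alpha}^{1-2k_1-k_2-k_3}$, which is exactly the borderline term in the target estimate. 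Everything else is routine.
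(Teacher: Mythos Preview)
Your proposal is correct and follows essentially the same route as the paper's proof (given in \cite{KK22}, and visible in commented form in the source): both split the integral at $\beta=-\alpha/2$ so that on each piece one of the weights is $\sim\jb{\alpha}$. The only difference is bookkeeping after the split: the paper extracts just a factor $\jb{\alpha}^{-k_1}$ and then invokes the auxiliary estimate
\[
\int_{-\alpha}^{\alpha}\jb{\alpha+\beta}^{-a}\jb{\beta}^{-b}\,d\beta
\lesssim
\begin{cases}
\Phi_{1-(a+b)}(\alpha),& a+b\neq 1,\\
\log(2+\alpha),& a+b=1,
\end{cases}
\]
(which follows from the pointwise inequality $\jb{\alpha+\beta}^{-a}\jb{\beta}^{-b}\lesssim\jb{\alpha+\beta}^{-a-b}+\jb{\beta}^{-a-b}$), whereas you extract the full comparable weight and reduce directly to Lemma~\ref{lem1}, paying with a short case analysis in $k_1+k_j$. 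Both arguments yield the same bound; your final paragraph about the overlap $[\alpha/2,\alpha]$ is unnecessary, since the estimate for $I_2$ already accounts for it.
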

\begin{lm}\label{lem1.1}
Let $0\leq a \leq b$ and $k > 1$. Then we have
\begin{align}
	\int_a^b (1+x)^{-k} \log(1+x) dx
	\lesssim (b-a) 
	\langle b \rangle^{-1} \langle a \rangle^{-k+1}\log (2+a).
	\label{dai25}
\end{align}
\end{lm}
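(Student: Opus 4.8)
The plan is to estimate $\int_a^b (1+x)^{-k}\log(1+x)\,dx$ for $k>1$ by exploiting that the logarithm grows slowly enough to be absorbed into a slightly weaker power, and then to reduce to the already-proven case $k>1$ of Lemma \ref{lem1}. First I would fix a small number $\theta\in(0,k-1)$ and use the elementary bound $\log(1+x)\lesssim_\theta (1+x)^{\theta}$ for $x\ge 0$. This gives
\[
\int_a^b (1+x)^{-k}\log(1+x)\,dx \lesssim \int_a^b (1+x)^{-(k-\theta)}\,dx,
\]
and since $k-\theta>1$, Lemma \ref{lem1} applied with exponent $k-\theta$ yields a bound of the form $(b-a)\langle b\rangle^{-1}\langle a\rangle^{-(k-\theta)+1}$. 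The remaining task is to trade the extra factor $\langle a\rangle^{\theta}$ back for the desired $\langle a\rangle^{-k+1}\log(2+a)$; but this is immediate because $\langle a\rangle^{\theta}\lesssim_\theta \log(2+a)\cdot$(something)? — no, that is false, so a crude global argument of this type loses too much.

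So instead I would argue more carefully by splitting the domain at a fixed threshold, say according to whether $a\le 1$ or $a\ge 1$ (equivalently $b\le 1$ or $b\ge 1$ after a further split), mirroring exactly the case analysis already used to prove Lemma \ref{lem1}. In the region $x\le 1$ the integrand is bounded (since $\log(1+x)\lesssim 1$ there) and $\langle b\rangle, \langle a\rangle, \log(2+a)$ are all comparable to constants, so the claimed estimate reduces to the trivial $\int_a^b 1\,dx = b-a$. In the region $x\ge 1$ I would integrate by parts once, writing $(1+x)^{-k} = -\frac{1}{k-1}\frac{d}{dx}(1+x)^{-(k-1)}$, to obtain
\[
\int_a^b (1+x)^{-k}\log(1+x)\,dx
= \Bigl[-\tfrac{1}{k-1}(1+x)^{-(k-1)}\log(1+x)\Bigr]_a^b
+ \tfrac{1}{k-1}\int_a^b (1+x)^{-k}\,dx.
\]
The boundary term at $a$ is $\tfrac{1}{k-1}(1+a)^{-(k-1)}\log(1+a)$, and the boundary term at $b$ has a favorable sign (it is negative), so it can be dropped; the leftover integral is handled directly by the $k>1$ case of Lemma \ref{lem1}, giving $\lesssim (b-a)\langle b\rangle^{-1}\langle a\rangle^{-(k-1)}$.

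At this point I would need to show the boundary term $(1+a)^{-(k-1)}\log(1+a)$ is itself dominated by $(b-a)\langle b\rangle^{-1}\langle a\rangle^{-(k-1)}\log(2+a)$, which is where a short subcase analysis on the size of $b-a$ versus $b$ enters — exactly as in the proof of Lemma \ref{lem1}. If $a\ge b/2$ then $(1+a)^{-(k-1)}\lesssim \langle a\rangle^{-(k-1)}$ and one needs $1\lesssim (b-a)\langle b\rangle^{-1}$, which fails when $b-a$ is small; so in that subcase one instead estimates the original integral directly, bounding the integrand on $[a,b]$ by its value near $a$: $\int_a^b(1+x)^{-k}\log(1+x)\,dx \lesssim (b-a)(1+a)^{-k}\log(2+a) \lesssim (b-a)\langle b\rangle^{-1}\langle a\rangle^{-(k-1)}\log(2+a)$, using $a\sim b$. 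If $a\le b/2$ (so $b-a\ge b/2 \gtrsim \langle b\rangle$ when $b\ge 1$), the factor $(b-a)\langle b\rangle^{-1}$ is $\gtrsim 1$ and the boundary term is absorbed without loss. The main obstacle is purely bookkeeping: making sure every subcase produces precisely the factor $(b-a)\langle b\rangle^{-1}\langle a\rangle^{-k+1}\log(2+a)$ and not something marginally weaker; the analytic content is entirely contained in Lemma \ref{lem1} together with the monotonicity $\log(1+x)\le\log(2+a)\cdot$(ratio), so no genuinely new difficulty arises beyond careful case splitting.
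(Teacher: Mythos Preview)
Your approach is correct but takes a different and somewhat more laborious route than the paper. You integrate by parts, throw away the boundary term at $b$ by sign, and then recover the factor $(b-a)\langle b\rangle^{-1}$ in front of the surviving boundary term $(1+a)^{-(k-1)}\log(1+a)$ by splitting into the subcases $a\ge b/2$ (where you re-estimate the original integral pointwise using $a\sim b$) and $a\le b/2$, $b\ge 1$ (where $(b-a)\langle b\rangle^{-1}\gtrsim 1$), together with the trivial region $b\le 1$. All of this works after the bookkeeping is filled in.

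The paper avoids every one of these case splits by \emph{not} discarding the boundary term at $b$. Keeping both boundary terms gives
\[
\frac{1}{k-1}\Bigl[(1+a)^{-(k-1)}\log(1+a)-(1+b)^{-(k-1)}\log(1+b)\Bigr]
\le \frac{\log(1+a)}{(1+a)^{k-1}}\Bigl\{1-\Bigl(\tfrac{1+a}{1+b}\Bigr)^{k-1}\Bigr\},
\]
using only $\log(1+b)\ge\log(1+a)$; then the elementary inequality $1-s^{l}\le\max\{1,l\}(1-s)$ for $0\le s\le 1$, $l\ge 0$, applied with $s=(1+a)/(1+b)$ and $l=k-1$, produces the factor $(b-a)/(1+b)$ directly. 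The remaining integral is handled by Lemma~\ref{lem1} exactly as you do. So the paper's version is a single clean line where yours requires three subcases; conversely, your argument is more self-contained in that it does not rely on the auxiliary inequality $1-s^{l}\lesssim 1-s$.
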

%%%%%%%%%%%%%%%%%%%%%%%%%%%%%%%%%%%%%%%%%
\begin{proof}
Integrating by parts, we get
\begin{align*}
\int_a^b (1+x)^{-k} \log(1+x) dx
=&\frac{1}{1-k}\left[ (1+x)^{1-k} \log(1+x) \right]_{a}^{b}\\
&+\frac{1}{k-1} \int_{a}^{b}(1+x)^{-k} dx\\
\lesssim & \frac{\log(1+a)}{(1+a)^{k-1}}
\left\{1- \left( \frac{1+a}{1+b} \right)^{k-1} \right\}\\
&+\int_{a}^{b}(1+x)^{-k} dx.
\end{align*}
Using the inequality:
\begin{align*}
1-s^l \leq \max \{ 1,l \} (1-s) \quad \mbox{for} \quad l\geq0,\ 0 \leq s \leq 1%.\label{dai9}
\end{align*}
in the first term, and Lemma \ref{lem1} in the second term, we get \eqref{dai25}.
This completes the proof.
\end{proof}
%%%%%%%%%%%%%%%%%%%%%%%%%%%%%%%%%%%%%%%%%%%%
\begin{lm}\label{lem2}
Let $0 \leq a \leq \alpha$ and $k \geq 0$. 
Then we have
\begin{align*}
	f_{k}(\alpha,a):=\int_{a}^{\alpha}
	\left\{\Psi(\beta,\alpha)
	\right\}^{k} d\beta \leq C_{k}(\alpha-a),
\end{align*}
where $\Psi(\beta, \alpha)$ is defined in \eqref{dai96}.
\end{lm}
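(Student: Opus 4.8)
The plan is to reduce the claim to the case of an integer exponent and then obtain the bound by an induction based on a single integration by parts.

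First I would record the sign information available on the interval of integration. Since $0\le a\le \beta\le\alpha$ we have $|\beta|=\beta$ and $(1+\alpha)/(1+\beta)\ge 1$, so
\[
\Psi(\beta,\alpha)=1+\log\frac{1+\alpha}{1+\beta}\ge 1 \qquad (a\le \beta\le\alpha).
\]
Consequently $\{\Psi(\beta,\alpha)\}^k\le \{\Psi(\beta,\alpha)\}^{m}$ for any integer $m\ge k$, hence $f_k(\alpha,a)\le f_{\lceil k\rceil}(\alpha,a)$, and it is enough to establish the estimate for $k=m\in\{0,1,2,\dots\}$, taking afterwards $C_k:=C_{\lceil k\rceil}$.

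For integer $m$ I would argue by induction on $m$. The case $m=0$ is immediate, since $f_0(\alpha,a)=\alpha-a$, so $C_0=1$. For the inductive step I would integrate by parts in $f_m(\alpha,a)=\int_a^\alpha\{\Psi(\beta,\alpha)\}^m\,d\beta$, using $1+\beta$ as an antiderivative of $d\beta$ and $\partial_\beta\log\frac{1+\alpha}{1+\beta}=-(1+\beta)^{-1}$; this gives
\[
f_m(\alpha,a)=\Bigl[(1+\beta)\{\Psi(\beta,\alpha)\}^m\Bigr]_{\beta=a}^{\beta=\alpha}+m\int_a^\alpha\{\Psi(\beta,\alpha)\}^{m-1}\,d\beta.
\]
Because $\Psi(\alpha,\alpha)=1$ and $\Psi(a,\alpha)\ge 1$, the boundary term equals $(1+\alpha)-(1+a)\{\Psi(a,\alpha)\}^m\le(1+\alpha)-(1+a)=\alpha-a$. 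Hence $f_m(\alpha,a)\le(\alpha-a)+m\,f_{m-1}(\alpha,a)$, and the induction hypothesis $f_{m-1}(\alpha,a)\le C_{m-1}(\alpha-a)$ yields $f_m(\alpha,a)\le(1+mC_{m-1})(\alpha-a)$; thus one may take $C_m=1+mC_{m-1}$.

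I do not anticipate any genuine obstacle; the estimate is elementary. The only two points that need a moment of attention are the reduction to integer exponents — which relies on $\Psi(\beta,\alpha)\ge 1$, valid precisely because $a\ge 0$ — and the verification that the boundary contribution in the integration by parts is $\le\alpha-a$, which again uses $\Psi(a,\alpha)\ge 1$. As an alternative route one could substitute $t=(1+\beta)/(1+\alpha)$ to rewrite $f_k(\alpha,a)=(1+\alpha)\int_{s}^{1}(1-\log t)^k\,dt$ with $s=(1+a)/(1+\alpha)$, and then bound $\int_s^1(1-\log t)^k\,dt\le C_k(1-s)$ by splitting into the cases $s\ge 1/2$ (where $1-\log t$ is bounded on $[s,1]$) and $s<1/2$ (where $1-s\ge 1/2$ while $\int_0^1(1-\log t)^k\,dt=\int_0^\infty(1+x)^k e^{-x}\,dx<\infty$); the integration‑by‑parts recursion above seems the shortest, however.
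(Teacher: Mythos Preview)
Your proposal is correct and follows essentially the same approach as the paper: reduce to integer exponents using $\Psi\ge 1$, then induct via a single integration by parts to obtain $f_m(\alpha,a)\le(\alpha-a)+m\,f_{m-1}(\alpha,a)$. The only cosmetic difference is that the paper takes $\beta$ rather than $1+\beta$ as the antiderivative of $d\beta$, which produces the extra harmless factor $\beta/(1+\beta)\le 1$ in the integral term; the resulting recursion and constants are the same.
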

%%%%%%%%%%%%%%%%%%%%%%%%%%%%%%%%%%%%%%%%%%%%%%%%
\begin{proof}
It suffices to prove the estimate when $k$ is a non-negative integer, because $\Psi(\beta,\alpha) \geq 1$ for $a \leq \beta \leq \alpha$. It is clear that $f_0(\alpha,a)=\alpha-a$. Suppose that $f_{k-1}(\alpha,a) \leq C_{k-1}(\alpha-a)$ holds. Then, by the
integration by parts, we have
\begin{align*}
f_{k}(\alpha,a)
&=\left[\beta  \left\{1+\log \left( \frac{1+\alpha}{1+\beta} \right)
	\right\}^{k} \right]_{a}^{\alpha}\\
	&+k
	\int_{a}^{\alpha}\frac{\beta}{1+\beta}
	\left\{1+\log \left( \frac{1+\alpha}{1+\beta} \right)
	\right\}^{k-1} d\beta\\
	&\leq \alpha-a +k f_{k-1}(\alpha,a).
\end{align*}
Thus we get the desired estimate by the induction.
This completes the proof.
\end{proof}
\begin{lm}\label{lem3}
Let $r_1 \neq 1$, $r_2 \geq 0$ and $\alpha \geq 0$. Then we have
\begin{align*}
\int_{-\alpha}^{\alpha}
	\langle \alpha+ \beta \rangle^{-r_1} \left\{ \Psi(\beta,\alpha)
	\right\}^{r_2} d\beta 
	\lesssim  \Phi_{1-r_1}(\alpha).
\end{align*}
\end{lm}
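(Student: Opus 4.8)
The plan is to split the $\beta$-integral at $\beta=-\alpha/2$, writing $I=I_1+I_2$ where $I_1$ denotes the part over $[-\alpha,-\alpha/2]$ and $I_2$ the part over $[-\alpha/2,\alpha]$, and then to bound each piece by $\Phi_{1-r_1}(\alpha)$ using Lemma \ref{lem2} and Lemma \ref{lem1} respectively. Throughout I would use only the elementary inequalities $\alpha\le\jb{\alpha}$, $\jb{2\alpha}\le 2\jb{\alpha}$ and $\tfrac12\jb{\alpha}\le\jb{\alpha/2}\le\jb{\alpha}$, which hold for all $\alpha\ge 0$.

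For $I_2$ I would argue as follows. On $[-\alpha/2,\alpha]$ we have $\alpha/2\le\alpha+\beta\le 2\alpha$, hence $\jb{\alpha+\beta}\sim\jb{\alpha}$ and so $\jb{\alpha+\beta}^{-r_1}\lesssim\jb{\alpha}^{-r_1}$ regardless of the sign of $r_1$. Pulling this weight out, enlarging the domain to $[-\alpha,\alpha]$, and using that $\Psi(\beta,\alpha)$ depends only on $|\beta|$ so that the integral over $[-\alpha,\alpha]$ is twice the one over $[0,\alpha]$, I obtain $I_2\lesssim\jb{\alpha}^{-r_1}\int_0^{\alpha}\{\Psi(\beta,\alpha)\}^{r_2}\,d\beta=\jb{\alpha}^{-r_1}f_{r_2}(\alpha,0)$. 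By Lemma \ref{lem2} this is $\lesssim\jb{\alpha}^{-r_1}\alpha\le\jb{\alpha}^{1-r_1}\le\Phi_{1-r_1}(\alpha)$.

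For $I_1$ the point is that the logarithmic weight is now harmless: on $[-\alpha,-\alpha/2]$ one has $|\beta|\ge\alpha/2$, so $1+|\beta|\ge\tfrac12(1+\alpha)$ and therefore $\Psi(\beta,\alpha)=1+\log\frac{1+\alpha}{1+|\beta|}\le 1+\log 2$, a constant. After the substitution $u=\alpha+\beta\in[0,\alpha/2]$ this gives $I_1\lesssim\int_0^{\alpha/2}\jb{u}^{-r_1}\,du$, and Lemma \ref{lem1} (which applies precisely because $r_1\ne 1$) bounds the right-hand side by $(\alpha/2)\jb{\alpha/2}^{-r_1}\lesssim\jb{\alpha}^{1-r_1}$ when $r_1<1$, and by $(\alpha/2)\jb{\alpha/2}^{-1}\jb{0}^{1-r_1}\lesssim 1$ when $r_1>1$; in both cases the bound is $\lesssim\Phi_{1-r_1}(\alpha)$. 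Adding the two estimates yields $I\lesssim\Phi_{1-r_1}(\alpha)$, as claimed.

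The proof is essentially bookkeeping, and the only delicate spot is the sub-region $\beta\approx-\alpha$, where $\jb{\alpha+\beta}$ degenerates to size $1$ and cannot be absorbed into a power of $\jb{\alpha}$; isolating this region (this is exactly what the cut at $-\alpha/2$ accomplishes) and treating it with Lemma \ref{lem1} is where the assumption $r_1\ne 1$ enters and where the dichotomy $\Phi_{1-r_1}(\alpha)=\jb{\alpha}^{1-r_1}$ for $r_1<1$ versus $\Phi_{1-r_1}(\alpha)=1$ for $r_1>1$ originates.
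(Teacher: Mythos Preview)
Your proof is correct and follows essentially the same route as the paper: the same split at $\beta=-\alpha/2$, the same use of Lemma~\ref{lem2} on $I_2$ after pulling out $\jb{\alpha+\beta}^{-r_1}\sim\jb{\alpha}^{-r_1}$, and the same observation that $\Psi(\beta,\alpha)\le 1+\log 2$ on $I_1$ reduces that piece to an elementary integral of $\jb{\alpha+\beta}^{-r_1}$. The only cosmetic difference is that for $I_1$ the paper enlarges the domain back to $[-\alpha,\alpha]$ and integrates directly, whereas you substitute $u=\alpha+\beta$ and invoke Lemma~\ref{lem1}; both arrive at $\Phi_{1-r_1}(\alpha)$ in the same way.
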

\begin{proof}
We set
\begin{align*}
	I_1&:=\int_{-\alpha}^{-\alpha/2} \langle \alpha + \beta \rangle^{-r_1} \left\{\Psi(\beta, \alpha) \right\}^{r_2} d\beta,\\
	I_2&:=\int_{-\alpha/2}^{\alpha} \langle \alpha + \beta \rangle^{-r_1} \left\{ \Psi(\beta, \alpha) \right\}^{r_2} d\beta.
\end{align*}
We have from \eqref{dai96}
\begin{align*}
I_{1} \lesssim (1+ \log 2)^{r_2} \int_{-\alpha}^{\alpha} \langle \alpha + \beta \rangle^{-r_1} d\beta
	\lesssim  \Phi_{1-r_1}(\alpha).
\end{align*}
From Lemma \ref{lem2}, we obtain
\begin{align*}
I_2 \lesssim \langle \alpha \rangle^{-r_1} \int_{-\alpha}^{\alpha} \left\{ \Psi(\beta, \alpha)
	\right\}^{r_2} d\beta
	\lesssim  \langle \alpha \rangle^{1-r_1}.
\end{align*}
Combining these estimates, we finish the proof.
\end{proof}

For the proofs of Lemma \ref{KO} and Lemma \ref{lem:boi-1}, 
see \cite{KubOht06}, Lemma 2.2 and Lemma 2.3, respectively.

\begin{lm} \label{KO}
Let $\rho_1 \geq 0$ and  $\rho_2 \ge 0$. Then there exists a constant
$C=C(\rho_1, \rho_2)>0$ such that
\begin{align*}
\int_a^b\frac{(y-a)^{\rho_2}}{y^{\rho_1}}d y
\ge \frac{C}{a^{\rho_1-\rho_2-1} }\left(1-\frac{a}{b}\right)^
{\rho_2+1}
\end{align*}
for $0< a< b$.
\end{lm}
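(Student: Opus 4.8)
The plan is to keep only a well-chosen sub-interval of $[a,b]$ on which the weight $y^{-\rho_1}$ is comparable to $a^{-\rho_1}$, and to evaluate the remaining elementary integral explicitly. First I would set $c:=\min\{b,2a\}$, so that $a<c\le b$; the point of this truncation level is that every $y\in[a,c]$ satisfies $a\le y\le 2a$, whence $y^{-\rho_1}\ge(2a)^{-\rho_1}$ because $\rho_1\ge0$. Discarding the part of the (nonnegative) integrand over $[c,b]$ and inserting this pointwise bound gives
\begin{align*}
\int_a^b\frac{(y-a)^{\rho_2}}{y^{\rho_1}}\,dy
\;\ge\;(2a)^{-\rho_1}\int_a^c(y-a)^{\rho_2}\,dy
\;=\;\frac{(2a)^{-\rho_1}}{\rho_2+1}\,(c-a)^{\rho_2+1}.
\end{align*}

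Next I would compare $c-a$ with $a\bigl(1-a/b\bigr)$. Since $c-a=\min\{a,\,b-a\}$ and $0<a<b$, we have $a\bigl(1-a/b\bigr)=a(b-a)/b\le a$ and $a(b-a)/b\le b-a$, hence $c-a\ge a\bigl(1-a/b\bigr)$. As $\rho_2+1>0$, raising to the power $\rho_2+1$ preserves this inequality, and substituting into the display above yields
\begin{align*}
\int_a^b\frac{(y-a)^{\rho_2}}{y^{\rho_1}}\,dy
\;\ge\;\frac{2^{-\rho_1}}{\rho_2+1}\,a^{\rho_2+1-\rho_1}\Bigl(1-\frac{a}{b}\Bigr)^{\rho_2+1}
\;=\;\frac{C}{a^{\rho_1-\rho_2-1}}\Bigl(1-\frac{a}{b}\Bigr)^{\rho_2+1},
\end{align*}
with $C:=2^{-\rho_1}/(\rho_2+1)$, which depends only on $\rho_1$ and $\rho_2$. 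This is the asserted bound.

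I do not expect a genuine obstacle: the only mild point is that the single choice $c=\min\{b,2a\}$ covers both regimes at once. When $b>2a$ one is really integrating over $[a,2a]$ and the surviving factor $(1-a/b)^{\rho_2+1}$ is harmless (it lies in $[2^{-\rho_2-1},1]$), so the claimed lower bound amounts to $a^{\rho_2+1-\rho_1}$ up to a constant; when $b\le 2a$ one keeps all of $[a,b]$, and $(1-a/b)^{\rho_2+1}$ is exactly what remains from $(b-a)^{\rho_2+1}=b^{\rho_2+1}(1-a/b)^{\rho_2+1}$. If a sharp constant were needed one would instead evaluate the integral exactly (e.g.\ via an incomplete Beta function after the substitution $y=a/t$), but for the stated qualitative estimate the crude truncation suffices.
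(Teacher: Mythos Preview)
Your argument is correct. The truncation at $c=\min\{b,2a\}$ is a clean device: on $[a,c]$ you have $y\le 2a$, so $y^{-\rho_1}\ge(2a)^{-\rho_1}$, and the verification that $c-a\ge a(1-a/b)$ in both cases ($b\ge 2a$ and $b\le 2a$) is straightforward and valid. The resulting explicit constant $C=2^{-\rho_1}/(\rho_2+1)$ is a pleasant bonus.

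As for comparison with the paper: the paper does not give its own proof of this lemma, but refers the reader to \cite{KubOht06}, Lemma~2.2. So a direct comparison with the paper's argument is not possible from the text at hand. Your elementary truncation is certainly in the spirit of the kind of pointwise lower bounds used elsewhere in the paper (cf.\ the restriction to $t+r>3(t-r)$ in the blow-up arguments), and it avoids any special-function machinery such as the incomplete Beta representation you mention at the end.
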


\begin{lm} \label{lem:boi-1}
Let $C_1$, $C_2>0$, $\alpha$, $\beta\ge 0$, $\theta \le 1$,
$\varepsilon\in (0,1]$, and $p>1$.
Suppose that $f(y)$ satisfies
$$
f(y)\ge C_1\varepsilon^{\alpha},\quad
f(y)\ge C_2\varepsilon^{\beta}\int_{1}^{y}\left(1-\frac{\xi}{y}\right)
\frac{f(\xi)^{p}}{\xi^{\theta}}\, d\xi,\quad y\ge 1.
$$
Then, $f(y)$ blows up in a finite time $T_*(\varepsilon)$. Moreover,
there exists a constant $C^*=C^*(C_1,C_2,p,\theta)>0$ such that
$$T_*(\varepsilon)\le \left\{ \begin{array}{ll}
\exp(C^*\varepsilon^{-\{(p-1)\alpha+\beta\}})
& \mbox{if} \hspace{2mm} \theta=1, \\
C^*\varepsilon^{-\{(p-1)\alpha+\beta\}/(1-\theta)}
& \mbox{if} \hspace{2mm} \theta <1.
\end{array} \right.
$$
\end{lm}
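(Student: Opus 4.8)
The plan is to prove blow-up by a John-type iteration of pointwise lower bounds for $f$. I read the two hypotheses as a base bound $f(y)\ge C_1\ve^\alpha$ together with a self-improving integral inequality: writing $1-\xi/y=(y-\xi)/y$, the second hypothesis reads
\[
f(y)\ge\frac{C_2\ve^\beta}{y}\int_1^y(y-\xi)\,\xi^{-\theta}f(\xi)^p\,d\xi,\qquad y\ge1.
\]
(Equivalently, with $H(y)=\int_1^y(y-\xi)\xi^{-\theta}f(\xi)^p\,d\xi$ one has $f\ge C_2\ve^\beta H/y$ and $H''=f^py^{-\theta}$, hence the convex differential inequality $H''\ge(C_2\ve^\beta)^pH^py^{-\theta-p}$; either form will serve.) Substituting the constant base bound and invoking Lemma \ref{KO} produces a first genuine lower bound growing in $y$, which the iteration then amplifies. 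I keep the base point fixed at $y=1$ so that every improved bound lives on the same half-line.

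In the subcritical range $\theta<1$ I will iterate the ansatz $f(y)\ge D_j(y-1)^{b_j}y^{-c_j}$. Plugging it in, bounding $\xi^{-\theta-pc_j}\ge y^{-\theta-pc_j}$ (valid since the exponent is $\le0$) and evaluating $\int_1^y(y-\xi)(\xi-1)^{pb_j}\,d\xi=(y-1)^{pb_j+2}/[(pb_j+1)(pb_j+2)]$ exactly, I obtain the globally valid recursions $b_{j+1}=pb_j+2$, $c_{j+1}=pc_j+\theta+1$ and $D_{j+1}=C_2\ve^\beta(pb_j+1)^{-1}(pb_j+2)^{-1}D_j^p$, starting from $b_0=c_0=0$, $D_0=C_1\ve^\alpha$. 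These give $b_j,c_j\sim p^j$ and, after telescoping the logarithms, $\log D_j=p^j(S_\infty+o(1))$ with $S_\infty=\frac{(p-1)\alpha+\beta}{p-1}\log\ve+O(1)$. Evaluating at a fixed $Y$ and using $2\log(Y-1)-(\theta+1)\log Y\sim(1-\theta)\log Y$, the lower bound $\log\!\big(D_j(Y-1)^{b_j}Y^{-c_j}\big)\sim p^j\big(S_\infty+\tfrac{(1-\theta)\log Y}{p-1}\big)$ diverges as $j\to\infty$ once $(1-\theta)\log Y>\{(p-1)\alpha+\beta\}\log(1/\ve)$. Since $f(Y)$ is finite, this is impossible, forcing $T_*(\ve)\le C^*\ve^{-\{(p-1)\alpha+\beta\}/(1-\theta)}$.

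The critical case $\theta=1$ is the main obstacle, and it is where the conclusion changes shape. Here $b_j-c_j=\frac{1-\theta}{p-1}(p^j-1)$ vanishes, the power profile saturates at $(y-1)/y$, and the polynomial iteration stops gaining; the amplification must now come from a logarithm. I will therefore feed the constant base bound directly into the inequality, which yields $f(y)\gtrsim\ve^{\beta+p\alpha}\log y$, and iterate the ansatz $f(y)\ge A_j(\log y)^{n_j}$, obtaining $n_{j+1}=pn_j+1$ (so $n_1=1$) and $A_{j+1}=c\,C_2\ve^\beta(pn_j+1)^{-1}A_j^p$. The delicate feature, absent in the subcritical case, is that the estimate $\int_1^y(y-\xi)\xi^{-1}(\log\xi)^m\,d\xi\gtrsim y(\log y)^{m+1}/(m+1)$ holds only while $\log y\gtrsim m\sim pn_j$, so at a fixed target $Y$ the iteration runs for only about $N\sim\log\log Y/\log p$ steps. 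Performing the resulting finite summation at this maximal $N$, the accumulated exponent equals $p^N\big(\tfrac{\log\log Y}{p-1}+\tfrac{(p-1)\alpha+\beta}{p-1}\log\ve+O(1)\big)$, which is large precisely when $\log\log Y\gtrsim\{(p-1)\alpha+\beta\}\log(1/\ve)$, i.e.\ $Y\gtrsim\exp(C^*\ve^{-\{(p-1)\alpha+\beta\}})$. Matching the number of effective steps to the target time, so as to land on a single rather than a double exponential, is the crux; starting the log-iteration from the constant bound (not after a polynomial step) is exactly what yields the sharp exponent $(p-1)\alpha+\beta$.
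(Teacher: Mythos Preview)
The paper does not prove this lemma at all; it simply cites \cite{KubOht06}, Lemma~2.3.  So there is no ``paper's own proof'' to compare against, and your sketch is the only argument on the table.  The John-type iteration you outline is indeed the standard route, and your treatment of the subcritical range $\theta<1$ is essentially correct: the ansatz $f\ge D_j(y-1)^{b_j}y^{-c_j}$ is preserved on all of $[1,\infty)$, the recursions $b_{j+1}=pb_j+2$, $c_{j+1}=pc_j+\theta+1$ are right, and letting $j\to\infty$ at a fixed $Y$ above the threshold forces $f(Y)=\infty$.  (One small caveat: the step $\xi^{-\theta-pc_j}\ge y^{-\theta-pc_j}$ needs $\theta+pc_j\ge0$, which fails at $j=0$ when $\theta<0$; this is easily repaired by using $\xi^{-\theta}\ge1$ for the first few steps until $c_j$ is large enough.)

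The critical case $\theta=1$, however, has a genuine gap.  Your log-iteration produces, at a target $Y$, the bound $f(Y)\ge A_N(\log Y)^{n_N}$ with $N\sim\log\log Y/\log p$; but this is a \emph{finite} number, and a large finite lower bound on $f(Y)$ is not a contradiction---$f(Y)$ is allowed to be large.  Unlike the subcritical case, here you cannot send the iteration index to infinity at a fixed $Y$, because the $j$-th bound is only valid for $\log y\gtrsim n_j$ and this threshold overtakes $Y$ after $N(Y)$ steps.  So the argument as written stops short of forcing $f(Y)=\infty$.  What is missing is a second stage: once the finite log-iteration yields a genuine power growth $f(y)\gtrsim y^{c}$ (with $c>0$ depending on $\ve$) beyond some $Y_0$, feed this back into the nonlinear inequality $f\ge C_2\ve^\beta\int(1-\xi/y)\xi^{-1}f(\xi)^p\,d\xi$; the exponent then gets multiplied by $p$ at each step with only a geometric growth of the base point, and \emph{that} iteration can be run indefinitely at a fixed $Y$, giving $f(Y)=\infty$ for $Y$ a bounded multiple of $Y_0$.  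Alternatively one can pass to the ODI $H''=y^{-1}f^p$ and argue by comparison, which is likely closer to what \cite{KubOht06} actually does.  Either way, the single finite log-sweep you describe does not by itself close the argument.
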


\section{Global existence and lower bounds of the lifespan}

\subsection{Proof of Theorem \ref{ge1} and  Theorem \ref{bw3} for $1<p<p_F(\kappa)$
and $p \nu<1$}

Our first step is to obtain the estimates for the homogeneous part of the solution
to the problem \eqref{eq.io0}.\\
We put 
\begin{align}
w_1(t,r)= r\, \langle r \rangle^{-\mu/2}
\times \left\{
\begin{array}{ll}
\langle t+r \rangle^{-1-\nu} & (\nu<0),\\
\langle t+r \rangle^{-1} \Psi(t-r,t+r)
& (\nu=0),\\
\langle t+r \rangle^{-1}  \langle t-r \rangle^{-\nu} 
& (\nu>0),
\end{array}
\right.\label{weight1}
\end{align}
where $\nu$ and $\Psi(\beta,\alpha)$ are defined in \eqref{dai98} and \eqref{dai96}, respectively.

We define the following weighted $L^\infty$-norm:
\begin{align}
\| u \|_1 =\sup_{ (t,r) \in I \times [0,\infty)}
\{ w_{1}(t,r)^{-1}
|u(t,r)| \},\label{norm1}
\end{align}
where we put
\begin{align*}
	I:=\left\{
	\begin{array}{ll}
	[0,\infty) & (p>p_{S}(3+\mu) \ \mbox{and} \ p \geq p_{F}(\kappa)),\\ \relax
	[0,T) & (1<p \leq p_{S}(3+\mu) \ \mbox{or} \ 1<p < p_{F}(\kappa)).
	\end{array}
	\right.
\end{align*}

For the proof of Lemma \ref{fil1}, see \cite{KK22}, Lemma 6.

\begin{lm} \label{fil1}
Assume that \eqref{1.3} holds and $ \varphi \in C^1([0,\infty))$, $ \psi \in C^0([0,\infty))$ satisfy
\begin{align*} %\label{eq.ad2}
|\varphi(r)| \lesssim \, r \, \langle r \rangle^{-\kappa}, \quad
|\psi(r)+\varphi^\prime(r)+w(r)\varphi(r) | \lesssim  \langle r \rangle^{-\kappa} \ \mbox{for} \ r \ge 0
\end{align*}
with some positive constant $\kappa$.
Then we have
\begin{align} \label{eq.io2}
| u_L (t,r)| \leq  \widetilde{C_0} w_1(t,r), \quad (t,r) \in [0,\infty) \times [0,\infty)
\end{align}
with some positive constant $\widetilde{C_0}$. Here $u_L$ is defined in \eqref{eq.io00}
\end{lm}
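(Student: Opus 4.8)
The plan is to insert the two-sided bound \eqref{eq.le2} for $E_-$ into the representation \eqref{eq.io00}, split $u_L=J+B$ into its integral term
$$
J(t,r)=\frac12\int_{|t-r|}^{t+r}E_-(t,r,y)\bigl(\psi(y)+\varphi'(y)+w(y)\varphi(y)\bigr)\,dy
$$
and its boundary term $B(t,r)=\chi(r-t)E_-(t,r,r-t)\varphi(r-t)$, and then bound $J$ and $B$ separately against $w_1$ from \eqref{weight1}, distinguishing the three regimes $\nu<0$, $\nu=0$, $\nu>0$; recall $\nu=\kappa-\mu/2-1>-1$ by the standing hypothesis $\kappa>\mu/2$.

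First I would treat $J$. The key point is that on the entire range of integration one has $0\le r-t+y\lesssim\jb{y}$: indeed $y\ge|t-r|$ forces $r-t+y\le y$ when $t\ge r$ and $r-t+y\le 2y$ when $t<r$. Hence \eqref{eq.le2} and the hypothesis on $\psi+\varphi'+w\varphi$ give
$$
|J(t,r)|\lesssim\jb{r}^{-\mu/2}\int_{|t-r|}^{t+r}\jb{r-t+y}^{\mu}\jb{y}^{-\kappa-\mu/2}\,dy\lesssim\jb{r}^{-\mu/2}\int_{|t-r|}^{t+r}\jb{y}^{-1-\nu}\,dy,
$$
since $\mu/2-\kappa=-1-\nu$. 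Applying Lemma \ref{lem1} with $a=|t-r|$, $b=t+r$, $k=1+\nu$, and then using $t+r-|t-r|=2\min\{t,r\}\le 2r$ together with $\Psi(|t-r|,t+r)=\Psi(t-r,t+r)$, reproduces exactly the three branches of \eqref{weight1}; in particular the logarithmic branch $k=1$ of Lemma \ref{lem1} matches the $\nu=0$ case.

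Next I would treat $B$, which is supported in $\{r\ge t\}$; there $\jb{t+r}\sim\jb{r}$ (since $r\le t+r\le 2r$), and evaluating \eqref{eq.le2} at $y=r-t$ gives $E_-(t,r,r-t)\sim\jb{r-t}^{\mu/2}\jb{r}^{-\mu/2}$, so the hypothesis $|\varphi(r-t)|\lesssim(r-t)\jb{r-t}^{-\kappa}$ yields $|B(t,r)|\lesssim\jb{r}^{-\mu/2}(r-t)\jb{r-t}^{-1-\nu}$. Since $s\mapsto s\jb{s}^{-\theta}$ is nondecreasing on $[0,\infty)$ for $0\le\theta\le1$ and $r-t\le r$, comparing term by term with each branch of $w_1$ — taking $\theta=1+\nu$ when $\nu\in(-1,0)$, and $\theta=1$ together with $\Psi\ge1$ and $\jb{r-t}\le\jb{r}$ when $\nu\ge0$ — gives $|B(t,r)|\lesssim w_1(t,r)$. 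Adding the two estimates yields \eqref{eq.io2}.

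This is more a matter of careful bookkeeping than of overcoming a real obstacle; the points that need the most care are the borderline case $\nu=0$, where one must check that the logarithm generated by Lemma \ref{lem1} is precisely the one built into $w_1$ and that the boundary term contributes no spurious logarithm, and the near-light-cone region $r\approx t$, where $B$ and $J$ are of comparable size and the identifications $\jb{t+r}\sim\jb{r}$ and $\jb{t-r}\sim\jb{r-t}$ must be used consistently.
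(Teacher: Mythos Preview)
Your proposal is correct and follows essentially the same route as the paper's proof (given in \cite{KK22}): split $u_L$ into the integral term and the boundary term, reduce the integral term to $\jb{r}^{-\mu/2}\int_{|t-r|}^{t+r}\jb{y}^{-1-\nu}\,dy$ via the kernel bound $|E_-(t,r,y)|\lesssim\jb{y}^{\mu/2}\jb{r}^{-\mu/2}$, and invoke Lemma~\ref{lem1} to recover the three branches of $w_1$. The only cosmetic difference is in the boundary term: the paper handles $B$ by a case split $r\ge 1$ versus $0<r\le 1$, whereas you use the monotonicity of $s\mapsto s\jb{s}^{-\theta}$ for $\theta\le 1$ together with $\jb{t+r}\sim\jb{r}$ on $\{r\ge t\}$; both yield the same conclusion.
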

Our next step is to consider the integral operator appeared in \eqref{eq.io0}:
\begin{align*} %\label{eq.io1}
I_-(F)(t,r)=
\frac12 \iint_{\Delta_-(t,r)} E_-(t-\sigma,r,y) F(\sigma, y) dy d\sigma.
\end{align*}
For $(\sigma,y) \in \Delta_{-}(t,r)$ we have $y \geq |t-r-\sigma|$, so that \eqref{eq.le2} yields
\begin{align*}
	|E_{-}(t-\sigma,r,y)| \lesssim \jb{r}^{-\mu/2} \jb{y}^{\mu/2}
	\quad \mbox{for} \quad (\sigma,y)\in \Delta_{-}(t,r).
\end{align*}
Hence we get
\begin{align} \label{eq.io1.1}
|I_-(F)(t,r)| \lesssim \jb{r}^{-\mu/2}
\iint_{\Delta_-(t,r)}  \jb{y}^{\mu/2} |F(\sigma, y)| dy d\sigma.
\end{align}
%%%%%%%%%%%%%%%%%%%%%%%%%%
We set
\begin{align}\label{dai99}
D_1(T)=\left\{
\begin{array}{ll}
1 & (p \geq p_{F}(\kappa)),\\
	(1+T)^{2-(p-1)\kappa} & (1<p<p_{F}(\kappa)).
\end{array}
\right.
\end{align}
In order to prove Theorem \ref{ge1} and Theorem \ref{bw3} with $p \nu <1$, we prepare the following Lemma \ref{lem4}
and Lemma \ref{lem6}.
%%%%%%%%%%%%%%%%%%%%%%%%%%%%%%%%%%%%%%%%
\begin{lm}\label{lem4}
Let $-1< \nu <0$ and $p>1$. Then, there exists a positive constant $\widetilde{C_1}$ such that
\begin{align}
	\|I_{-}(F)\|_{1} \leq \widetilde{C_1} \| u \|_1^p D_1(T) \label{dai1}
\end{align}
with $F(t,r)= |u(t,r)|^{p}/r^{p-1}$.
\end{lm}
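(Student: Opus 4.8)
The plan is to linearise around the weight $w_1$ that defines $\|\cdot\|_1$, use \eqref{eq.io1.1} to pass to a purely two–dimensional integral over $\Delta_-(t,r)$, and then estimate that integral by two successive applications of Lemma \ref{lem1} (together with Lemma \ref{lem1.1} and Lemma \ref{lem3} for the logarithmic borderline cases). Since $-1<\nu<0$, \eqref{weight1} and \eqref{norm1} give $|u(\sigma,y)|\le\|u\|_1\,y\,\jb{y}^{-\mu/2}\jb{\sigma+y}^{-1-\nu}$, so with $m:=\tfrac{\mu}{2}(p-1)\ge0$,
\[
\jb{y}^{\mu/2}|F(\sigma,y)|=\jb{y}^{\mu/2}\frac{|u(\sigma,y)|^p}{y^{p-1}}\ \le\ \|u\|_1^p\,y\,\jb{y}^{-m}\,\jb{\sigma+y}^{-(1+\nu)p}.
\]
By \eqref{eq.io1.1} it therefore suffices to prove
\[
J(t,r):=\iint_{\Delta_-(t,r)}y\,\jb{y}^{-m}\,\jb{\sigma+y}^{-(1+\nu)p}\,dy\,d\sigma\ \lesssim\ r\,\jb{t+r}^{-1-\nu}\,D_1(T)\qquad\text{on }I\times[0,\infty).
\]

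First I would change to the null coordinates $\alpha=\sigma+y$, $\beta=\sigma-y$, in which $\Delta_-(t,r)=\{\,|t-r|<\alpha<t+r,\ -\alpha<\beta<t-r\,\}$, $y=(\alpha-\beta)/2$ and $dy\,d\sigma=\tfrac12\,d\alpha\,d\beta$. For fixed $\alpha$ the variable $y$ runs over $(z_0,\alpha)$ with $z_0:=(\alpha-t+r)/2\ge0$, so after using $y\le\jb{y}$ and Lemma \ref{lem1},
\[
\int_{z_0}^{\alpha}z\,\jb{z}^{-m}\,dz\ \lesssim\ (\alpha-z_0)\,\jb{\alpha}^{1-m}\qquad(m<2),
\]
with $\Psi(z_0,\alpha)$ resp. $\jb{z_0}^{2-m}$ replacing $\jb{\alpha}^{1-m}$ when $m=2$ resp. $m>2$; here $\alpha-z_0=(\alpha+t-r)/2\le\min\{\alpha,t\}$. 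This reduces $J$ to the one–dimensional estimate
\[
J(t,r)\ \lesssim\ \int_{|t-r|}^{t+r}(\alpha-z_0)\,\jb{\alpha}^{\,1-m-(1+\nu)p}\,d\alpha
\]
(up to logarithmic weights in the borderline cases, handled by Lemma \ref{lem1.1} and Lemma \ref{lem3}).

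The second step is to estimate this $\alpha$–integral. Its range has length $(t+r)-|t-r|=2\min\{t,r\}\le2r$, which is the origin of the prefactor $r$ in $w_1$ once Lemma \ref{lem1} is applied. The algebraic input is the identity $(1+\nu)p+\tfrac{\mu}{2}(p-1)=\kappa p-\tfrac{\mu}{2}$, by which the power of $\jb{t+r}$ coming out of Lemma \ref{lem1} is $-1-\nu$ together with an excess exactly equal to $\gamma_F(p,\kappa)=2-(p-1)\kappa$. Keeping the bound $\alpha-z_0\le\min\{\alpha,t\}$ (rather than the cruder $\alpha-z_0\le\jb{\alpha}$), so that this excess attaches to $t$ and not to $r$, and applying Lemma \ref{lem1} once more, one arrives at
\[
J(t,r)\ \lesssim\ r\,\jb{t+r}^{-1-\nu}\,\Phi_{\gamma_F(p,\kappa)}(t).
\]
Finally $\Phi_{\gamma_F(p,\kappa)}(t)\lesssim D_1(T)$: if $p\ge p_F(\kappa)$ then $\gamma_F(p,\kappa)\le0$ and $\Phi_{\gamma_F(p,\kappa)}(t)=1=D_1(T)$; if $1<p<p_F(\kappa)$ then $\gamma_F(p,\kappa)>0$ and, because $(t,r)\in[0,T)\times[0,\infty)$, $\Phi_{\gamma_F(p,\kappa)}(t)=\jb{t}^{\gamma_F(p,\kappa)}\lesssim(1+T)^{2-(p-1)\kappa}=D_1(T)$. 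Recalling \eqref{dai99}, this proves \eqref{dai1}.

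The step I expect to require the most care is the last one, namely arranging the bookkeeping so that the excess factor is $\Phi_{\gamma_F(p,\kappa)}(t)$, carried by the \emph{bounded} variable $t$: with the crude bound $\alpha-z_0\le\jb{\alpha}$ one would instead produce $\jb{t+r}^{\gamma_F(p,\kappa)}$, which is not bounded on $I\times[0,\infty)$ when $\gamma_F(p,\kappa)>0$, so it is essential to retain $\alpha-z_0\le t$ in the region where $r$ dominates. The remaining complications — the several sub-cases according to whether $m$ and the combined exponents lie below or above $1$ or $2$, where the logarithmic weights of Lemma \ref{lem1.1} and Lemma \ref{lem3} enter — are routine bookkeeping.
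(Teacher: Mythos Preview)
Your approach is essentially the paper's: reduce to the same integral $I(t,r)$ via \eqref{eq.io1.1}, pass to characteristic coordinates, and estimate by Lemma~\ref{lem1}. The algebraic identity you use, $(1+\nu)p+\tfrac{\mu}{2}(p-1)=\kappa p-\tfrac{\mu}{2}$, is equivalent to the paper's \eqref{dai105}, and your crucial observation that the excess $\Phi_{\gamma_F(p,\kappa)}$ must attach to $t$ rather than $t+r$ is exactly right.

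The organizational difference is that the paper splits into $t\ge r/2$ versus $r/2\ge t$ \emph{before} estimating the inner integral, and this is not merely cosmetic. In the regime $t\ge r/2$ it first peels off $\jb{\alpha}^{-(1+\nu)}$ using $z\le\alpha$, so the inner $z$-integral has exponent $(p-1)\kappa-1$ rather than your $m-1$; the only borderline is then $p=p_F(\kappa)$, which is disposed of by a $\delta$-shift rather than by Lemma~\ref{lem1.1} or Lemma~\ref{lem3}. In the regime $r/2\ge t$ it uses the pointwise bounds $z\le\alpha$ and $\jb{z}\ge\jb{z_0}\gtrsim\jb{t+r}$ instead of Lemma~\ref{lem1}, so no borderline in $m$ arises at all.

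Your route hits an extra snag you gloss over: when $m>2$, Lemma~\ref{lem1} produces a factor $\jb{z_0}^{2-m}$ that depends on $\alpha$ through $z_0=(\alpha-t+r)/2$, so the outer integral is no longer of the clean form you wrote. To control it you will in effect have to reintroduce the $t$ versus $r$ split (using $\jb{z_0}\gtrsim\jb{t+r}$ when $r\ge 2t$, and $\jb{z_0}^{2-m}\le1$ otherwise), at which point you are doing the paper's argument. Likewise, your invocation of Lemma~\ref{lem3} for the logarithmic cases is not quite apt: that lemma treats integrals in $\beta$ with a $\Psi(\beta,\alpha)$ weight, whereas here the log factor is $\Psi(z_0,\alpha)$ sitting inside an $\alpha$-integral. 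These are repairable, but they are not the ``routine bookkeeping'' you suggest; the paper's early case split is precisely what makes them routine.
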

%%%%%%%%%%%%%%%%%%%%%%%%%%%%%%%%%%%%%%
\begin{proof}
From \eqref{weight1} and \eqref{norm1}, we obtain
\begin{align*}
	\langle r \rangle^{p \mu/2 } 
	\langle t+r \rangle^{p(1+\nu)}|F(t,r)|
	\leq r \| u \|_{1}^{p}.
\end{align*}
It follows from (\ref{eq.io1.1}) that
\begin{align*}
	|I_{-}(F)(t,r)| \lesssim \langle r \rangle^{-\mu/2} \| u \|_{1}^p I(t,r),
\end{align*}
where we put
\begin{align}
I(t,r)= \iint_{\Delta_{-}(t,r)}
\frac{y}{\langle y \rangle^{(p-1)\mu/2}
\langle \sigma+y \rangle^{p(1+\nu)}} dy d\sigma.\label{dai3}
\end{align}
From \eqref{norm1}, \eqref{weight1} and \eqref{dai99}, in order to \eqref{dai1}, it is enough to prove 
\begin{align}
	I(t,r) \lesssim \frac{r}{\langle t+r \rangle^{1+\nu}}
	\times \left\{
	\begin{array}{ll}
	1 & (p\geq p_{F}(\kappa)),\\
	(1+t)^{2-(p-1)\kappa} & (1<p<p_{F}(\kappa)).
	\end{array}
	\right.\label{dai4}
\end{align}
To evaluate the integral (\ref{dai3}), we pass to the coordinates
\begin{align*}
	\alpha=\sigma+y,\quad z=y
\end{align*}
and deduce
\begin{align}
	I(t,r) \lesssim \int_{|r-t|}^{t+r} 
	\frac{1}{\langle \alpha \rangle^{p(1+\nu)}}
	\int_{(\alpha-t+r)/2}^{\alpha}
	\frac{z}{\langle z \rangle^{(p-1)\mu/2}} dz d\alpha.
	\label{dai5}
\end{align}

First, we suppose $t \geq r/2$ and divide the proof into two cases.\\
(i) \ $p\neq p_{F}(\kappa)$.\\
First of all, we note that \eqref{dai98} implies
\begin{align}\label{dai105}
	-(p-1)\mu/2=(p-1)(1+\nu)-(p-1)\kappa.
\end{align}
Since $(p-1)(1+\nu)>0$ and $(p-1)\kappa \neq 2$, we get from (\ref{dai5}), \eqref{dai105}
and \eqref{dai104}
\begin{align*}
	I(t,r) &\lesssim \int_{|r-t|}^{t+r} 
	\frac{1}{\langle \alpha \rangle^{1+\nu}}
	\int_{0}^{\alpha} \frac{1}{\langle z \rangle^{(p-1)\kappa-1}}
	dz d\alpha\\
	&\lesssim \int_{|r-t|}^{t+r} 
	\frac{1}{\langle \alpha \rangle^{1+\nu}}
	\Phi_{2-(p-1)\kappa}(\alpha) d\alpha\\
	&\lesssim \Phi_{2-(p-1)\kappa}(t)
	\int_{|r-t|}^{t+r} 
	\frac{1}{\langle \alpha \rangle^{1+\nu}}
	d\alpha.
\end{align*}
Here, $\Phi_{\rho}(s)$is defined in \eqref{dai104}.	
Since $\nu<0$, it follows from Lemma \ref{lem1} that
\begin{align}
I(t,r)
&\lesssim \frac{r}{\langle t+r \rangle^{1+\nu}} \Phi_{2-(p-1)\kappa}(t).
\label{dai6}
\end{align}
(ii) \ $p = p_{F}(\kappa)$.\\
Taking $\delta>0$ so small that
$(p-1)(1+\nu)-\delta>0$.
Since $(p-1)\kappa=2$, from (\ref{dai5}), \eqref{dai105} and Lemma \ref{lem1}, we have
\begin{align}
	I(t,r) &\lesssim \int_{|r-t|}^{t+r} 
	\frac{1}{\langle \alpha \rangle^{1+\nu+\delta}}
	\int_{0}^{\alpha} \frac{1}{\langle z \rangle^{1-\delta}} dz
	d \alpha\nonumber\\
	&\lesssim \int_{|r-t|}^{t+r} 
	\frac{1}{\langle \alpha \rangle^{1+\nu}}
	d \alpha\nonumber\\
	&\lesssim \frac{r}{\langle t+r \rangle^{1+\nu}}.\label{dai7}
\end{align}
Therefore, from (\ref{dai6}) and (\ref{dai7}), we obtain (\ref{dai4}) for $t \geq r/2$.

Next, suppose $r/2 \geq t$. Since $(r+t)/3\leq r-t$, we obtain
from (\ref{dai5}) and \eqref{dai105}
\begin{align*}
I(t,r) 
&\lesssim \int_{r-t}^{t+r} \frac{\alpha (\alpha+t-r)}{\langle \alpha \rangle^{p(1+\nu)} 
\langle \alpha-t+r \rangle^{(p-1)\mu/2}} d\alpha\\
&\lesssim
\frac{t+r}{\langle t+r \rangle^{1+\nu+(p-1)\kappa}} 
\int_{r-t}^{t+r} (\alpha+t-r) d\alpha\\
&\lesssim \frac{r}{\langle t+r \rangle^{1+\nu}}
(1+t)^{2-(p-1)\kappa}.
\end{align*}
Hence we get (\ref{dai4}) for $r/2 \geq t$.
This completes the proof.
\end{proof}
%%%%%%%%%%%%%%%%%%%%%%%%%%%%%%%%%%%%%%%%%%%%%%
%%%%%%%%%%%%%%%%%%%%%%%%%%%%%%%%%%%%%%
\begin{lm}\label{lem6}
	Let $0 \leq \nu < 1/p$ if $1<p\leq p_S(3+\mu)$ and $0 \leq \nu \leq \eta$ if $p>p_S(3+\mu)$. Then, there exists a positive constant $\widetilde{C_1}$ such that
	\begin{align}
	\|I_{-}(F)\|_{1} \leq \widetilde{C_1} \| u \|_1^p \times
	D_1(T) \label{dai8}
\end{align}
with $F(t,r)= |u(t,r)|^{p}/r^{p-1}$.
Here $\eta=(\mu/2+1)p-(\mu/2+2)$.
\end{lm}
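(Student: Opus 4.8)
The plan is to imitate the proof of Lemma \ref{lem4}, now for the weight $w_{1}$ of \eqref{weight1} in the range $\nu\ge0$, whose new features relative to the case $\nu<0$ are the extra factor $\jb{t-r}^{-\nu}$ when $\nu>0$ (resp.\ $\Psi(t-r,t+r)$ when $\nu=0$) and the exponent $-1$ on $\jb{t+r}$ in place of $-1-\nu$. First I would reduce \eqref{dai8} to a scalar estimate. From \eqref{norm1} and \eqref{weight1},
\begin{align*}
|F(\sigma,y)|=\frac{|u(\sigma,y)|^{p}}{y^{p-1}}
\le \|u\|_{1}^{p}\,\frac{y\,G(\sigma-y,\sigma+y)}{\jb{y}^{p\mu/2}\,\jb{\sigma+y}^{p}},
\end{align*}
where $G(\beta,\alpha)=\jb{\beta}^{-p\nu}$ if $\nu>0$ and $G(\beta,\alpha)=\{\Psi(\beta,\alpha)\}^{p}$ if $\nu=0$; inserting this in \eqref{eq.io1.1} gives $|I_{-}(F)(t,r)|\lesssim\jb{r}^{-\mu/2}\|u\|_{1}^{p}\,I(t,r)$ with
\begin{align*}
I(t,r):=\iint_{\Delta_{-}(t,r)}
\frac{y\,G(\sigma-y,\sigma+y)}{\jb{y}^{(p-1)\mu/2}\,\jb{\sigma+y}^{p}}\,dy\,d\sigma ,
\end{align*}
so that, by \eqref{norm1}, \eqref{weight1} and \eqref{dai99}, \eqref{dai8} follows once we show
\begin{align*}
I(t,r)\lesssim r\,\jb{t+r}^{-1}\,H(t-r,t+r)\,D_{1}(T),\qquad
H(\beta,\alpha)=\begin{cases}\jb{\beta}^{-\nu}&(\nu>0),\\ \Psi(\beta,\alpha)&(\nu=0).\end{cases}
\end{align*}

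Next I would pass to the coordinates $\alpha=\sigma+y$, $\beta=\sigma-y$, under which $\Delta_{-}(t,r)$ becomes $\{\,|t-r|<\alpha<t+r,\ -\alpha<\beta<t-r\,\}$, so that $|\beta|<\alpha$ and $y=(\alpha-\beta)/2$. As in Lemma \ref{lem4}, the key device is to trade powers of $\jb{\alpha-\beta}$ for powers of $\jb{\alpha}$ by means of $\jb{\alpha-\beta}\lesssim\jb{\alpha}$, the identity $\kappa=\mu/2+1+\nu$ (which is \eqref{dai105}), and its consequence $(p-1)\mu/2=\eta-p+2$ coming from \eqref{dai89}; concretely,
\begin{align*}
\frac{\alpha-\beta}{\jb{\alpha-\beta}^{(p-1)\mu/2}}
=(\alpha-\beta)\,\jb{\alpha-\beta}^{p-1}\,\jb{\alpha-\beta}^{-(\eta+1)}
\lesssim \jb{\alpha}^{p-1}\,\jb{\alpha-\beta}^{-\eta},
\end{align*}
which brings $I(t,r)$ to $\displaystyle\int_{|t-r|}^{t+r}\jb{\alpha}^{-1}\Bigl(\int_{-\alpha}^{\alpha}\jb{\alpha-\beta}^{-\eta}\,G(\beta,\alpha)\,d\beta\Bigr)d\alpha$. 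The inner $\beta$-integral I would treat by Lemma \ref{lem3.1} when $\nu>0$ and by Lemma \ref{lem3} when $\nu=0$ (first absorbing $\jb{\alpha-\beta}^{-\eta}\lesssim\jb{\alpha}^{-\eta}$ in the case $\eta<0$, which occurs only for $1<p\le p_{S}(3+\mu)$). Here the hypothesis $\nu\le\eta$, valid when $p>p_{S}(3+\mu)$, is exactly what lets Lemma \ref{lem3.1} extract a factor $\jb{\alpha}^{-\nu}$, and one is left with a one-dimensional integral of the schematic form $\int_{|t-r|}^{t+r}\jb{\alpha}^{-1-\nu}\,\Phi_{2-(p-1)\kappa}(\alpha)\,d\alpha$, up to logarithmic corrections in the borderline cases (for instance $(p-1)\kappa=2$, or $\nu=0$, where Lemma \ref{lem1} produces $\Psi(t-r,t+r)$ in place of a power).

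This last integral is estimated as at the end of the proof of Lemma \ref{lem4}, splitting according to $t\ge r/2$ and $r/2\ge t$. When $t\ge r/2$ one has $\jb{\alpha}\sim\jb{t+r}\sim\jb{t}$, so $\Phi_{2-(p-1)\kappa}(\alpha)\lesssim1$ if $p\ge p_{F}(\kappa)$ and $\Phi_{2-(p-1)\kappa}(\alpha)\lesssim(1+t)^{2-(p-1)\kappa}=D_{1}(T)$ if $1<p<p_{F}(\kappa)$, while Lemma \ref{lem1} (with Lemma \ref{lem2} and Lemma \ref{lem1.1} handling the logarithmic terms) gives $\int_{|t-r|}^{t+r}\jb{\alpha}^{-1-\nu}\,d\alpha\lesssim r\,\jb{t+r}^{-1}H(t-r,t+r)$. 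When $r/2\ge t$ one has $r-t\ge(t+r)/3$ and $|\beta|\ge r-t$ throughout the domain, so every $\jb{\,\cdot\,}$-factor in the integrand may be replaced by its value at $t+r$ and $I(t,r)$ reduces to an elementary integral over a region of area $\sim t^{2}$; since moreover $\jb{t+r}\gtrsim1+t$, the leftover $t^{2}\jb{t+r}^{-(p-1)\kappa}$ is bounded by $(1+t)^{2-(p-1)\kappa}$, hence by $D_{1}(T)$. Combining the two regions gives \eqref{dai8}.

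The step I expect to be the main obstacle is making the factor $H(t-r,t+r)$ of $w_{1}$ come out exactly, with no residual growth in $\jb{t+r}$ beyond the admissible $D_{1}(T)$; this is where all the hypotheses are consumed. The identity $\gamma_{S}(p,3+\mu)=2(1-p\eta)$ shows that $p>p_{S}(3+\mu)$ means precisely $p\eta>1$, which together with $\nu\le\eta$ controls the part of $I(t,r)$ coming from a neighbourhood of the light cone and makes the extraction of $\jb{\alpha}^{-\nu}$ in Lemma \ref{lem3.1} legitimate; and $1<p\le p_{S}(3+\mu)$ with $\nu<1/p$ keeps $p\nu<1$, so the only borderline logarithmic loss, which would occur at $p\nu=1$, does not arise. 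Finally, the identity $2-(p-1)\kappa=1-\eta-(p-1)\nu$ is what makes the powers of $1+t$ produced in both regions match $D_{1}(T)$ exactly, and it is because the exponent $-1-\nu$ collapses to $-1$ that the borderline case $\nu=0$ yields the factor $\Psi(t-r,t+r)$ of \eqref{weight1}.
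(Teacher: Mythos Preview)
Your plan coincides with the paper's: reduce to the double integral $I(t,r)$, pass to $(\alpha,\beta)$-coordinates, extract $\jb{\alpha}^{-1}$ via \eqref{dai89}, handle the inner $\beta$-integral with Lemma~\ref{lem3.1} (resp.\ Lemma~\ref{lem3} when $\nu=0$), and finish with Lemma~\ref{lem1}. Your split into $t\ge r/2$ versus $r/2\ge t$ differs from the paper's $r\ge t$ versus $t\ge r$, but it serves the same purpose and is equally valid.

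Two subcases need more care than you indicate. First, when $1<p\le p_S(3+\mu)$ and $0<\eta<\nu<1/p$, neither your choice $k_1=\nu$ in Lemma~\ref{lem3.1} (which requires $\nu\le\eta$) nor your absorption trick (which requires $\eta<0$) applies; take instead $k_1=\eta$, $k_2=0$, $k_3=p\nu-\eta$, which still yields exactly your schematic $\alpha$-integral thanks to the identity \eqref{dai21} (the paper treats this explicitly as its case (iii) for $t\ge r$). Second, at the borderline $p=p_F(\kappa)$ with $p>p_S(3+\mu)$, Lemma~\ref{lem3.1} returns a factor $\log(2+\alpha)$ that is \emph{not} absorbed by $D_1(T)=1$; the paper avoids this by shifting the exponent by a small $\delta>0$ with $\eta-\delta>\nu$ before invoking Lemma~\ref{lem3.1}, and you should do likewise rather than appealing to unspecified ``logarithmic corrections''.
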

%%%%%%%%%%%%%%%%%%%%%%%%%%%%%%%%%%%%%%%%%%%%
\noindent
{\bf Remark}. \
When $p>p_S(3+\mu)$ (resp. $1<p<p_S(3+\mu)$), we have
$\eta>1/p$ (resp. $\eta<1/p$).
%%%%%%%%%%%%%%%%%%%%%%%%%%%%%%%
\begin{proof}
It follows from \eqref{weight1} and  \eqref{norm1} that
\begin{align*}
	\langle r \rangle^{p \mu/2 } 
	\langle t+r \rangle^{p}\langle t-r \rangle^{p \nu}|F(t,r)|
	\leq r 
	\left\{ \Psi(t-r,t+r) \right\}^{p q_1}
	\| u \|_{1}^{p},
\end{align*}
where the number $q_1$ is defined as follows:
$$q_1=1 \ \mbox{for }\ \nu=0, \ \mbox{and} \ q_1=0 \ \mbox{for} \ \nu \neq 0.
$$ 
We get from (\ref{eq.io1.1})
\begin{align*}%\label{dai47}
	|I_{-}(F)(t,r)|\lesssim \langle r \rangle^{-\mu/2} \| u \|_{1}^p  I(t,r),
\end{align*}
where we put
\begin{align}
I(t,r)= \iint_{\Delta_{-}(t,r)}
\frac{y \left\{ \Psi(\sigma-y,\sigma+y) \right\}^{p q_1}}{\langle y \rangle^{(p-1)\mu/2}
\langle \sigma+y \rangle^{p}
\langle \sigma-y \rangle^{p \nu}} dy d\sigma.
\label{dai10}
\end{align}
In order to show \eqref{dai8}, from \eqref{weight1} and  \eqref{norm1}, it is enough to show
\begin{align}
	I(t,r)\lesssim \frac{r  \left\{ \Psi(t-r,t+r) \right\}^{q_1}}{
	\langle t+r \rangle
	\langle t-r \rangle^{\nu}}
	\times \left\{
	\begin{array}{ll}
	1 & (p\geq p_{F}(\kappa)),\\
	(1+t)^{2-(p-1)\kappa} & (1<p<p_{F}(\kappa)).
	\end{array}
	\right.\label{dai11}
\end{align}

To evaluate the integral (\ref{dai10}), we pass to the coordinates
\begin{align*}
\alpha = \sigma+y,\quad \beta=y-\sigma,
\end{align*}
and deduce
\begin{align}
	I(t,r) &\lesssim \int_{|r-t|}^{t+r} 
	\int_{r-t}^{\alpha}
	\frac{\left\{ \Psi(\beta,\alpha) \right\}^{p q_1}}
	{
	\langle \alpha+\beta \rangle^{(p-1)\mu/2-1}
	\langle \alpha \rangle^{p}
	\langle \beta \rangle^{p \nu}} d\beta d\alpha.
	\label{dai20}
\end{align}

First, suppose $r \geq t$. 
Then, we get from \eqref{dai20} and \eqref{dai89}
\begin{align}
I(t,r)
&\lesssim \int_{r-t}^{t+r} 
\frac{1}{
\langle \alpha \rangle^{1+\eta}}
\int_{r-t}^{\alpha} 
\frac{\left\{ \Psi(\beta,\alpha) \right\}^{p q_1}}{\langle \beta \rangle^{p\nu}} d\beta d\alpha.\label{dai30}
\end{align}
For $\alpha \geq r-t \geq 0$, we have from Lemma \ref{lem1} and Lemma \ref{lem2}
\begin{align}\label{dai12}
\int_{r-t}^{\alpha} \frac{\left\{ \Psi(\beta,\alpha) \right\}^{p q_1}}{\langle \beta \rangle^{p\nu}} d\beta
\lesssim \left\{
\begin{array}{ll}
\dfrac{\alpha-r+t}{\langle \alpha \rangle^{p\nu}} & (p \nu<1),\\
1+\log(1+\alpha) & (p \nu=1),\\
1 & (p \nu >1).
\end{array}
\right.
\end{align}
We divide the proof into two cases.\\
(i) \ $1/p \leq \nu \leq \eta$ and $p>p_S(3+\mu)$.\\
Since $\nu<\eta$ if $p \nu=1$,
we obtain from \eqref{dai30}, \eqref{dai12} and Lemma \ref{lem1}
\begin{align*}
	I(t,r) %&\lesssim \int_{r-t}^{t+r} \frac{\log(1+\alpha)}
	%{\jb{\alpha}^{1+\eta}}d\alpha\\
	\lesssim \int_{r-t}^{t+r} \frac{1}
	{\jb{\alpha}^{1+\nu}}d\alpha
	\lesssim \frac{r}{\langle t+r \rangle 
\langle t-r \rangle^{\nu}}.
\end{align*}
(ii) \ $0 \leq \nu <1/p$.\\
Since $\eta+p\nu=\nu+(p-1)\kappa-1$, 
we have from \eqref{dai30} and \eqref{dai12}
\begin{align}\label{dai18}
I(t,r) 
	&\lesssim  \int_{r-t}^{t+r} 
	\frac{\alpha-r+t}{\langle \alpha \rangle^{\nu+(p-1)\kappa}}d\alpha.
\end{align}
If $(p-1)\kappa\geq 2$, it follows from \eqref{dai18} and  Lemma \ref{lem1} that
\begin{align}
	I(t,r) 
	&\lesssim  \int_{r-t}^{t+r} 
	\frac{1}{\langle \alpha \rangle^{\nu+(p-1)\kappa-1}}d\alpha\nonumber\\
	&\lesssim \frac{r}{\langle t+r \rangle
	\langle r-t \rangle^{\nu}}
	\times
	\left\{
	\begin{array}{ll}
	1 & ((p-1)\kappa>2 \ \mbox{or} \ \nu>0),\\
	\Psi(t-r,t+r) & ((p-1)\kappa=2 \ \mbox{and} \ \nu=0).
	\end{array}
	\right.\label{dai13}
\end{align}
On the other hand, if $0 < (p-1)\kappa <2$, we obtain from \eqref{dai18} and Lemma \ref{lem1}
\begin{align}
	I(t,r) 
	&\lesssim \frac{1}{\langle r-t \rangle^{\nu}}  \int_{r-t}^{t+r} 
	\frac{\alpha-r+t}{\langle \alpha \rangle^{\kappa(p-1)}}d\alpha\nonumber\\
	&\lesssim \langle r-t \rangle^{-\nu} \times
	\left\{
	\begin{array}{ll}
	t \langle t+r \rangle^{1-(p-1)\kappa} & (1 \leq  (p-1)\kappa<2),\\
	t^2 \langle t+r \rangle^{-(p-1)\kappa} & (0< (p-1)\kappa<1).
	\end{array}
	\right.
	\label{dai14}
\end{align}
Let $r \geq 1$.
Since $r \sim \langle t+r \rangle$ for $r \geq t$, we get from (\ref{dai14})
\begin{align}
	I(t,r) 
	&\lesssim \frac{r}{\langle t+r \rangle \langle t-r \rangle^{\nu}}(1+t)^{2-(p-1)\kappa}.
	\label{dai15}
\end{align}
Let $0\leq r \leq 1$. Since $\langle t+r \rangle \sim 1$ for $r \geq t$, we have from (\ref{dai14})
\begin{align}
	I(t,r) 
	& \lesssim \frac{r}{\langle t+r \rangle
	\langle t-r \rangle^{\nu}}.\label{dai16}
\end{align}
Hence, from (\ref{dai13}), (\ref{dai15}) and (\ref{dai16}), we obtain (\ref{dai11}) for $r \geq t$.

Next, we suppose $t \geq r$ and divide the proof into three cases.
We remark that those cases assure the assumptions on $\nu$, because
$1<p\le p_S(3+\mu)$ and $p \nu<1$ implies $p<p_F(\kappa)$.\\
(i) \  $0 \leq \nu \leq \eta$ and $p \neq p_F(\kappa)$.\\
Since $\langle\alpha\rangle \ge \langle (\alpha+\beta)/2\rangle$ for
$-\alpha\le \beta \le \alpha$, we have from \eqref{dai20} and \eqref{dai89}
\begin{align}
I(t,r) \lesssim \int_{t-r}^{t+r} 
	\frac{1}{\langle \alpha \rangle}
	\int_{-\alpha}^{\alpha}
	\frac{\left\{ \Psi(\beta,\alpha) \right\}^{p q_1}}
	{ \langle \alpha+\beta \rangle^{\eta}
	\langle \beta \rangle^{p \nu}} d\beta d\alpha.
	\nonumber
\end{align}
We shall use Lemma \ref{lem3} when $\nu=0$, and Lemma \ref{lem3.1}
with $k_1=\nu$, $k_2=\eta-\nu$, $k_3=(p-1)\nu$ when $\nu > 0$.
Then we get
\begin{align}
I(t,r) \lesssim \int_{t-r}^{t+r} 
	\frac{1}{\langle \alpha \rangle^{1+\nu}} \Phi_{2-(p-1)\kappa}(\alpha)
	 d\alpha,\nonumber
\end{align}
because $(p-1)\kappa \neq 2$ when $p\ne p_F(\kappa)$, and 
\begin{align}
 \eta+(p-1)\nu=-1+(p-1)\kappa.
 \label{dai21}
\end{align}
Now, Lemma \ref{lem1} yields
\begin{align}
I(t,r) \lesssim \frac{r \left\{ \Psi(t-r,t+r) \right\}^{q_1}}{\langle t+r \rangle \langle t-r \rangle^{\nu}} \Phi_{2-(p-1)\kappa}(t),\label{dai22}
\end{align}
(ii) \ $0 \le \nu \le \eta$, $p=p_F(\kappa)$ and $p>p_S(3+\mu)$.\\
%$\nu \geq 0$, $p=p_F(\kappa)$ and $p>p_S(3+\mu)$.\\
Note that $p=p_F(\kappa)$ and $p>p_S(3+\mu)$ leads to $\nu<1/p<\eta$.
Therefore, we can take $\delta>0$ so small that $p-1-\delta>0$ and 
$\eta-\delta>\nu$.
Then, similarly to the previous case, 
we get from (\ref{dai20}), \eqref{dai21}, Lemma \ref{lem3} 
and Lemma \ref{lem3.1}
\begin{align}
I(t,r) &\lesssim  \int_{t-r}^{t+r} 
	\frac{1}
	{\langle \alpha \rangle^{1+\delta}}
	\int_{-\alpha}^{\alpha}
	\frac{ \left\{ \Psi(\beta,\alpha) \right\}^{p q_1}}
	{ \langle \alpha+\beta \rangle^{\eta-\delta}
	\langle \beta \rangle^{p \nu}} d\beta d\alpha
	\nonumber\\
	& \lesssim \int_{t-r}^{t+r} 
	\frac{1}{\langle \alpha \rangle^{\nu+1+\delta}} \Phi_{\delta}(\alpha)
	 d\alpha\nonumber\\
	 & \lesssim \frac{r \left\{ \Psi(t-r,t+r) \right\}^{ q_1}}{\langle t+r \rangle \langle t-r \rangle^{\nu}}.\label{dai23}
\end{align}
(iii) \ $\max\{\eta, 0\} \leq \nu <1/p$  and $1<p\le p_S(3+\mu)$.\\
Since $p-1-\eta+\nu>0$ and $(p-1)(\mu/2+1)=1+\eta$, we get from (\ref{dai20})
\begin{align}
I(t,r) \lesssim \int_{t-r}^{t+r} \frac{1}{\langle \alpha \rangle^{1+\eta-\nu}}
\int_{-\alpha}^{\alpha}
\frac{\left\{ \Psi(\beta,\alpha) \right\}^{p q_1}}
{\langle \alpha+\beta \rangle^{\nu} \langle \beta \rangle^{p \nu}}d\beta d\alpha.
\nonumber
\end{align}
Using Lemma \ref{lem2} when $\nu=0$, and Lemma \ref{lem3.1} with
$k_1=\nu$, $k_2=0$, $k_3=(p-1) \nu$ when $\nu > 0$, 
we obtain
\begin{align}
I(t,r) &\lesssim \int_{t-r}^{t+r} \frac{1}{\langle \alpha \rangle^{\eta+p \nu}} d\alpha
=
 \int_{t-r}^{t+r} \frac{1}{\langle \alpha \rangle^{\nu-1+(p-1)\kappa}} d\alpha,
\nonumber
\end{align}
by $p\nu<1$ and \eqref{dai21}.
Recalling the fact that $(p-1)\kappa<2$ in this case, we find 
\begin{align}
I(t,r) &\lesssim \langle t+r \rangle^{2-\kappa(p-1)} \int_{t-r}^{t+r} \frac{1}{\langle \alpha \rangle^{\nu +1}} d\alpha\nonumber\\
&\lesssim \frac{r \left\{ \Psi(t-r,t+r) \right\}^{q_1}}{\langle t+r \rangle \langle t-r \rangle^{\nu}}(1+t)^{2-(p-1)\kappa}.
\label{dai24}
\end{align}
Hence, from (\ref{dai22}), (\ref{dai23}) and (\ref{dai24}), we obtain (\ref{dai11}) for $t \geq r$.
This completes the proof.
\end{proof}
%%%%%%%%%%%%%%%%%%%%%%%%%%%%%%%%%%%%%%%%%%%%%%
%%%%%%%%%%%%%%%%%%%%%%%%%%%%%%%%%%%%%%%%%%%%%%

%%%%%%%%%%%%%%%%%%%%%%%%%
\noindent
{\bf End of the proof of Theorem \ref{ge1} and 
Theorem \ref{bw3} for $1<p<p_F(\kappa)$ and $p \nu<1$}.
Let $X$ be the linear space defined by
\begin{align*}
X=\left\{  u(t,x) \in C(I \times (0,\infty)) \ ; \ \| u \|_1 < \infty \right\}.
\end{align*}
We can verify easily that $X$ is complete with respect  the norm $\| \cdot \|_1$. We define the sequence of functions $\{ u_n \}$ by
\begin{align*}
u_0=\varepsilon u_{L},\quad u_{n+1}= \varepsilon u_{L} +I_{-}(|u_{n}|^p/r^{p-1}) \quad (n=0,1,2,\cdots).
\end{align*}
It follows from Lemma \ref{fil1} that $\| u_0 \|_{1} \leq \varepsilon \widetilde{C}_0$. Hence $u_{0} \in X$. 

In the following, we assume that
\begin{align}
2^p p \widetilde{C}_{1} D_1(T) (\varepsilon \widetilde{C}_{0})^{p-1} \leq 1 \quad 
\mbox{and} \quad \varepsilon \widetilde{C}_0 \leq \frac{1}{2},\label{dai90}
\end{align}
where $\widetilde{C}_1$  is the constant in Lemma \ref{lem4} and Lemma \ref{lem6}.
Then, we have
\begin{align}
2^p p \widetilde{C}_{1} D_1(T) \| u_{0} \|_1^{p-1} \leq 1 \quad 
\mbox{and} \quad \| u_{0} \|_1 \leq \frac{1}{2}.\label{dai91}
\end{align}

Now, we conclude the proof of the theorems.
First, we prove Theorem \ref{ge1}.
Suppose that $\kappa >\mu/2$, $p>p_S(3+\mu)$ and $p \geq p_{F}(\kappa)$ hold.
Then $\nu>-1$ and $D_1(T)=1$.
In addition, we replace $\kappa$ by $\tilde{\kappa}=\min\{\kappa, (\mu/2+1)p-1\}$ 
in \eqref{eq.zj1d}, so that $\tilde{\nu}:=\tilde{\kappa}-\mu/2-1=\min\{\nu,\eta\}$.
% without loss of generality,
% we may assume $\kappa \leq (\mu/2+1)p-1$ (i.e. $\nu \leq \eta$),
% because $2/(p-1) < (\mu/2+1)p-1$ if $p>p_S(3+\mu)$.
As in \cite{Joh79}, we see that if $u_{0}$ satisfies (\ref{dai91}), then 
it follows from Lemma \ref{lem4} and Lemma \ref{lem6} that 
$\{ u_n\}$ is a Cauchy sequence in $X$. 
Since $X$ is complete, there exists $u \in X$ such that $u_{n}$ converges to $u$ uniformly.
Clearly $u$ satisfies \eqref{eq.io0}.
Thus we get Theorem \ref{ge1},

Next,  we move on to the proof of Theorem \ref{bw3} for the case
where $\kappa >\mu/2$,  $1<p<p_F(\kappa)$ and $p \nu<1$ hold.
In this case, we have $\nu>-1$ and $D_1(T)=(1+T)^{2-(p-1)\kappa}$.
Similarly to the above, we obtain a unique local solution of (\ref{eq.io0}),
provided (\ref{dai90}) holds.
This means (\ref{llifespan}) with $1<p<p_{F}(\kappa)$ and $p \nu <1$ is valid. 
This competes the proof.\\

%%%%%%%%%%%%%%%%%%%%%%%%%%%%%%%%%%%
\subsection{Proof of Theorem \ref{bw3} with $p \nu \geq 1$}
%%%%%%%%%%%%%%%%%%%%%%%%%%%%%%%%%%%%

It is convenient to transform the equation \eqref{eq.io0} to
the following integral equation:
\begin{align}
v=I_{-}(|\varepsilon u_{L} +v|/r^{p-1}),\label{dai92}
\end{align}
as in the proof of Theorem 2.3 in \cite{KubOht06}.
We observe that the maximal existence time of 
the solution of \eqref{eq.io0} is the same as that of the solution $v=v(t,r)$ of \eqref{dai92}.
Indeed, if we would obtain a solution $v$ of \eqref{dai92}, then $u:=\varepsilon u_{L} +v$
is the solution of \eqref{eq.io0}, so that the desired conclusion follows.

In order to solve \eqref{dai92}, we introduce a weight function:
\begin{align}
w_{2}(t,r):=\frac{r}{\langle r \rangle^{\mu/2}} \times \left\{
\begin{array}{lr}
\dfrac{ \left\{ \Psi(t-r,t+r) \right\}^{q_2} \left\{\log(2+t+r) \right\}^{q_3} }{\langle t+r \rangle^{1+\eta}}
& (\eta \leq 0),\\
\dfrac{ \left\{\log(2+|t-r|) \right\}^{q_3} }{\jb{t+r}\langle t-r \rangle^{\eta}}
& (\eta >0),
\end{array}
\right.\nonumber\\
\label{weight2}
\end{align}
where we put
\begin{align*}
&q_2=\left\{
\begin{array}{ll}
1 & (\eta=0),\\
0 &  (\eta \neq 0),
\end{array}
\right. \quad
q_3=\left\{
\begin{array}{ll}
1 & (p \nu=1),\\
0 &  (p \nu >1).
\end{array}
\right.%\label{symbol1}
\end{align*}
By using the following weighted $L^{\infty}$ norm:
\begin{align}\label{norm2}
	\| v \|_{2}:=\sup_{(t,r) \in [0,T) \times [0,\infty)} w_{2}(t,r)^{-1}|v(t,r)|,
\end{align}
we shall prove Lemma \ref{lem8} and Lemma \ref{lem9} below.
%%%%%%%%%%%%%%%%%%%%%%%%%%%%%%%%%%%%%%%%%%%%%%%%
%%%%%%%%%%%%%%%%%%%%%%%%%%%%%%%%%%%%%%%%%%%%%%
%%%%%%%%%%%%%%%%%%%%%%%%%%%%%%%%%%%%%%%%%%%%%
\begin{lm}\label{lem8}
If $p\nu \geq 1$ and $1<p \leq p_S(3+\mu)$, then there exists a positive constant $\widetilde{C}_1$ such that
\begin{align}
	\| I_{-}(|u_L|^p/r^{p-1}) \|_{2} \leq  \widetilde{C}_1 \| u_L\|_{1}^{p}.
	\label{dai45}
\end{align}
\end{lm}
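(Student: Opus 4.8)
The plan is to bound $I_-(|u_L|^p/r^{p-1})$ pointwise by $w_2(t,r)$ times $\|u_L\|_1^p$, using the basic estimate \eqref{eq.io1.1} and the fact that, by Lemma \ref{fil1}, $u_L$ obeys the $w_1$-weight. First I would record that, from the definition \eqref{weight1} of $w_1$ and the norm \eqref{norm1},
\begin{align*}
\langle r\rangle^{p\mu/2}\langle t+r\rangle^{p}\langle t-r\rangle^{p\nu}\,|u_L(t,r)|^p
\lesssim r\,\{\Psi(t-r,t+r)\}^{pq_1}\,\|u_L\|_1^p,
\end{align*}
with $q_1=1$ if $\nu=0$ and $q_1=0$ otherwise (when $\nu<0$ the $\langle t-r\rangle^{p\nu}$ factor is simply dropped, or absorbed since $p\nu\ge1$ is assumed here, so actually we are in the regime $\nu>0$). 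Plugging this into \eqref{eq.io1.1} gives $|I_-(|u_L|^p/r^{p-1})(t,r)|\lesssim \langle r\rangle^{-\mu/2}\|u_L\|_1^p\, I(t,r)$ with
\begin{align*}
I(t,r)=\iint_{\Delta_-(t,r)}\frac{y\,\{\Psi(\sigma-y,\sigma+y)\}^{pq_1}}{\langle y\rangle^{(p-1)\mu/2}\langle\sigma+y\rangle^{p}\langle\sigma-y\rangle^{p\nu}}\,dy\,d\sigma,
\end{align*}
exactly the integral already analyzed in the proof of Lemma \ref{lem6} (display \eqref{dai10}). So the task reduces to showing
\begin{align*}
I(t,r)\lesssim \frac{r\,\{\Psi(t-r,t+r)\}^{q_2}\{\log(2+t+r)\}^{q_3}}{\langle t+r\rangle^{1+\eta}}\quad(\eta\le0),\qquad
I(t,r)\lesssim \frac{r\,\{\log(2+|t-r|)\}^{q_3}}{\langle t+r\rangle\langle t-r\rangle^{\eta}}\quad(\eta>0),
\end{align*}
which then matches $w_2$ via \eqref{weight2} and \eqref{norm2}.

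The key computational step is to pass to characteristic coordinates $\alpha=\sigma+y$, $\beta=y-\sigma$ as in \eqref{dai20}, splitting into $r\ge t$ and $t\ge r$. In the region $r\ge t$ I would use the identity $(p-1)(\mu/2+1)=1+\eta$ (from \eqref{dai89}) to write the $\alpha$-power as $\langle\alpha\rangle^{-1-\eta}$ after performing the inner $\beta$-integral, where the inner integral $\int_{r-t}^\alpha \{\Psi(\beta,\alpha)\}^{pq_1}\langle\beta\rangle^{-p\nu}\,d\beta$ is controlled by Lemma \ref{lem1}/Lemma \ref{lem2}: since now $p\nu\ge1$, this inner integral is $\lesssim 1+\log(1+\alpha)$ when $p\nu=1$ and $\lesssim 1$ when $p\nu>1$ — this is precisely where the $q_3$ logarithm is born. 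Then the outer $\alpha$-integral $\int_{r-t}^{t+r}\langle\alpha\rangle^{-1-\eta}\{\log(2+\alpha)\}^{q_3}\,d\alpha$ is evaluated by Lemma \ref{lem1} when $q_3=0$, and by Lemma \ref{lem1.1} (the logarithmic refinement) when $q_3=1$; the sign of $\eta$ determines whether the answer carries $\langle t+r\rangle^{-1-\eta}$ (with a $\Psi$ factor in the borderline case $\eta=0$, i.e.\ $q_2=1$) or $\langle t+r\rangle^{-1}\langle t-r\rangle^{-\eta}$. In the region $t\ge r$ I would instead use $\langle\alpha\rangle\ge\langle(\alpha+\beta)/2\rangle$ to convert $\langle\alpha\rangle^{-p}$ into $\langle\alpha\rangle^{-1}\langle\alpha+\beta\rangle^{-(p-1)}$, split off $\langle\alpha+\beta\rangle^{-\eta}\langle\beta\rangle^{-p\nu}$ and estimate the $\beta$-integral by Lemma \ref{lem3} (when $q_1=1$) or Lemma \ref{lem3.1} (when $q_1=0$), using the relation $\eta+p\nu=\nu-1+(p-1)\kappa$ together with $p\le p_S(3+\mu)$ to control the exponents; since $1<p\le p_S(3+\mu)$ forces $(p-1)\kappa<2$ here, the surviving $\langle t+r\rangle$-power works out, again producing the required $\log$ or $\Psi$ factors in the critical cases.

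The main obstacle I anticipate is the bookkeeping of the borderline powers: when $\eta=0$ and/or $p\nu=1$ the naive integral estimates diverge logarithmically, and one must show that the logarithms produced by the inner $\beta$-integral and by the outer $\alpha$-integral combine into exactly the weights $\{\Psi(t-r,t+r)\}^{q_2}$ and $\{\log(2+\cdot)\}^{q_3}$ appearing in $w_2$ — no more and no less. This requires the refined one-dimensional estimates (Lemma \ref{lem1.1}, Lemma \ref{lem2}, Lemma \ref{lem3}) rather than their crude versions, and careful attention to whether the argument of the logarithm is $t+r$ or $|t-r|$ depending on the sign of $\eta$. Once these borderline cases are handled, the off-critical cases $\eta\ne0$, $p\nu>1$ follow by the same scheme with only power-law integrals, and combining the $r\ge t$ and $t\ge r$ bounds gives \eqref{dai45}.
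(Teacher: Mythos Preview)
Your setup is correct: $p\nu\ge1$ forces $\nu>0$, so $q_1=0$ and the $\Psi$-factor is absent; the reduction to the integral $I(t,r)$ of \eqref{dai10} is exactly what the paper does. But from there the paper takes a shorter and cleaner route than you propose, and your $t\ge r$ sketch contains a false assertion.

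The paper does \emph{not} split into $r\ge t$ and $t\ge r$. Instead, starting from \eqref{dai20} it enlarges the inner $\beta$-range to $[-\alpha,\alpha]$ uniformly and writes
\[
I(t,r)\lesssim\int_{|t-r|}^{t+r}\frac{1}{\langle\alpha\rangle}\int_{-\alpha}^{\alpha}\frac{d\beta}{\langle\alpha+\beta\rangle^{\eta}\langle\beta\rangle^{p\nu}}\,d\alpha.
\]
The key observation is that $1<p\le p_S(3+\mu)$ is equivalent to $p\eta\le1$, hence $\eta\le1/p\le\nu<p\nu$; Lemma~\ref{lem3.1} with $k_1=\eta$, $k_2=0$, $k_3=p\nu-\eta$ then gives
\[
\int_{-\alpha}^{\alpha}\frac{d\beta}{\langle\alpha+\beta\rangle^{\eta}\langle\beta\rangle^{p\nu}}\lesssim\langle\alpha\rangle^{-\eta}\{\log(2+\alpha)\}^{q_3},
\]
since $k_1+k_2+k_3=p\nu\ge1$. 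One application of Lemma~\ref{lem1} (or Lemma~\ref{lem1.1} when $q_3=1$ and $\eta>0$) to the remaining integral $\int_{|t-r|}^{t+r}\langle\alpha\rangle^{-1-\eta}\{\log(2+\alpha)\}^{q_3}\,d\alpha$ produces the $w_2$-weight directly, in both sign regimes for $\eta$. Neither $\nu$ nor $\kappa$ appears after Lemma~\ref{lem3.1} is invoked.

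Your claim ``$1<p\le p_S(3+\mu)$ forces $(p-1)\kappa<2$ here'' is false: take $\mu=0$, $p=2<p_S(3)=1+\sqrt{2}$, $\kappa=10$; then $\nu=9$, $p\nu=18\ge1$, yet $(p-1)\kappa=10$. The quantity $(p-1)\kappa$ is simply irrelevant for this lemma --- it entered in Lemma~\ref{lem6} because the target there was $w_1$ with decay $\langle t-r\rangle^{-\nu}$, whereas here the target is $w_2$ with decay $\langle t-r\rangle^{-\eta}$, and choosing $k_1=\eta$ in Lemma~\ref{lem3.1} extracts $\langle\alpha\rangle^{-\eta}$ directly without any detour through $\kappa$. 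Your $r\ge t$ sketch is fine; your $t\ge r$ sketch would be salvageable with this correction, but the paper's unified treatment avoids the case split entirely.
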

%%%%%%%%%%%%%%%%%%%%%%%%%%%%%%%%%%%%%%%%%%%%%%%%%
\begin{proof}
Since $\nu>0$, similarly to the proof of Lemma \ref{lem6}, we get
$$
| I_{-}(|u_L|^p/r^{p-1})(t,r)| \lesssim 
	\langle r \rangle^{\mu/2} \| u_L\|_{1}^{p} I(t,r),
$$
where $I(t,r)$ is the one defined in \eqref{dai10}.
In view of \eqref{weight2} and \eqref{norm2}, 
for getting \eqref{dai45}, it is enough to prove
\begin{align}
	I(t,r)
	\lesssim 
	\langle r \rangle^{\mu/2} w_2(t,r).\label{dai48}
\end{align}
It follows from \eqref{dai20} and \eqref{dai89} that
\begin{align*}
	I(t,r)	&\lesssim \int_{|r-t|}^{t+r} 
	\frac{1}{\langle \alpha \rangle}
	\int_{-\alpha}^{\alpha}
	\frac{1}
	{\langle \alpha+\beta \rangle^{\eta}
	\langle \beta \rangle^{p \nu}} d\beta d\alpha.
\end{align*}
From %$p\nu \geq 1$ and 
$1<p \leq p_S(3+\mu)$, we have
$\eta \le 1/p$, so that $p\nu> \nu \ge 1/p \ge \eta$.
Therefore, using
%Since $\eta \leq \nu$ and $p \nu \geq 1$, we have from
Lemma \ref{lem3.1} with $k_1=\eta$, $k_2=0$, $k_3=p\nu-\eta$, we get
\begin{align*}
	\int_{-\alpha}^{\alpha}
	\frac{1}
	{\langle \alpha+\beta \rangle^{\eta}
	\langle \beta \rangle^{p \nu}} d\beta
	\lesssim \langle \alpha \rangle^{-\eta} 
	\left\{ \log(2+\alpha) \right\}^{q_3}.
\end{align*}
From Lemma \ref{lem1} and Lemma \ref{lem1.1}, we obtain
\begin{align*}
I(t,r) &\lesssim \int_{|t-r|}^{t+r}\frac{ \left\{ \log(2+\alpha) \right\}^{q_3}}{\jb{\alpha}^{1+\eta}}
d \alpha\\
&\lesssim r \times
\left\{
\begin{array}{ll}
\langle t+r \rangle^{-(1+\eta)}  \{ \Psi(t-r,t+r) \}^{q_2}  \left\{ \log(2+t+r) \right\}^{q_3}  & (\eta \leq 0),\\
\langle t+r \rangle^{-1} \langle t-r \rangle^{-\eta} \left\{ \log(2+|t-r|) \right\}^{q_3} & (\eta >0).
\end{array}
\right.
\end{align*}
Hence \eqref{dai48} is deduced. This completes the proof.
\end{proof}

%%%%%%%%%%%%%%%%%%%%%%%%%%%%%%%%%%%%%%%%%%%%%%%%%%
We set
\begin{align}\label{defD2}
D_2(T)=\left\{
\begin{array}{ll}
	(1+T)^{\gamma_S(p,3+\mu)/2} 
	\left\{ \log(2+T) \right\}^{(p-1)q_3}
	& (1<p<p_S(3+\mu)),\\
	\left\{ \log(2+T) \right\}^{1+(p-1)q_3} & (p=p_S(3+\mu)).
\end{array}
\right.
\end{align}

%%%%%%%%%%%%%%%%%%%%%%%%%%%%%%%%%%%%%%%%%%%%%%%%%%
\begin{lm}\label{lem9}
Let $p_1=1$ or $p$.
If $p \nu \geq 1$ and $1<p \leq p_S(3+\mu)$, then there exists a positive constant $\widetilde{C}_2$ such that
\begin{align}\label{dai103}
	\| I_{-}(G) \|_{2} \leq  \widetilde{C}_2 \| u_{L} \|_{1}^{p-p_1}
 \| v \|_2^{p_1}
	D_2(T)^{{p_1}/p}.
\end{align}
with $ G(t,r)=|u_L(t,r)|^{p-p_1}|v(t,r)|^{p_1}/r^{p-1}$.
\end{lm}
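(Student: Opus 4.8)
The plan is to run exactly the scheme already used for Lemma \ref{lem6} and Lemma \ref{lem8}, carrying along in addition the logarithmic factors by means of Lemma \ref{lem1.1} and Lemma \ref{lem2}. Note first that $p\nu\geq1$ forces $\nu>0$, so $u_L$ is controlled by the third line of \eqref{weight1}, while $v$ is controlled by \eqref{weight2} according to the sign of $\eta$; moreover, since $1<p\leq p_S(3+\mu)$ one has $\eta\leq1/p$.

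\textbf{Reduction.} By the definitions \eqref{norm1} and \eqref{norm2},
$$|u_L(\sigma,y)|\leq\|u_L\|_1\,w_1(\sigma,y),\qquad|v(\sigma,y)|\leq\|v\|_2\,w_2(\sigma,y),$$
whence $|G(\sigma,y)|\leq\|u_L\|_1^{p-p_1}\|v\|_2^{p_1}\,w_1(\sigma,y)^{p-p_1}w_2(\sigma,y)^{p_1}/y^{p-1}$. Plugging this into \eqref{eq.io1.1}, the estimate \eqref{dai103} reduces to the pointwise bound
$$I(t,r):=\iint_{\Delta_-(t,r)}\langle y\rangle^{\mu/2}\,\frac{w_1(\sigma,y)^{p-p_1}w_2(\sigma,y)^{p_1}}{y^{p-1}}\,dy\,d\sigma\ \lesssim\ \langle r\rangle^{\mu/2}\,w_2(t,r)\,D_2(T)^{{p_1}/p}$$
for all $(t,r)\in[0,T)\times[0,\infty)$. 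Inserting the explicit forms of $w_1$, $w_2$ and using the identities $(p-1)(\mu/2+1)=1+\eta$ (immediate from \eqref{dai89}), \eqref{dai105} and \eqref{dai21}, one checks that, up to a constant, the integrand of $I(t,r)$ equals $y\,\langle y\rangle^{-(p-1)\mu/2}\,\langle\sigma+y\rangle^{-m}\,\langle\sigma-y\rangle^{-a}$ times a logarithmic factor, where $(m,a)=(p+p_1\eta,\,(p-p_1)\nu)$ when $\eta\leq0$ and $(m,a)=(p,\,(p-p_1)\nu+p_1\eta)$ when $\eta>0$, and the logarithmic factor is $\{\Psi(\sigma-y,\sigma+y)\}^{p_1q_2}\{\log(2+\sigma+y)\}^{p_1q_3}$ when $\eta\leq0$ and $\{\log(2+|\sigma-y|)\}^{p_1q_3}$ when $\eta>0$. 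Thus $I(t,r)$ has exactly the shape of the integrals handled in the proofs of Lemma \ref{lem6} and Lemma \ref{lem8}, the only differences being that the decay along the light cone is now governed by $\eta\leq1/p$ in place of $\nu\geq1/p$ and that extra logarithmic factors are present.

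\textbf{Evaluation.} As in \eqref{dai20} we pass to $\alpha=\sigma+y$, $\beta=y-\sigma$, so that $y=(\alpha+\beta)/2$ and $\Delta_-(t,r)$ becomes $\{\,|t-r|<\alpha<t+r,\ r-t<\beta<\alpha\,\}$, and we split into the two regions $r\geq t$ and $t\geq r$, each subdivided into the sub-cases $\eta\leq0$ and $\eta>0$ (and, at $p=p_S(3+\mu)$, into $p\nu>1$ and $p\nu=1$, which fixes $q_3$). In each sub-case we first carry out the $\beta$-integration: using $\beta\leq\alpha$ together with \eqref{dai105} to absorb $y\,\langle y\rangle^{-(p-1)\mu/2}$ into a power of $\langle\alpha\rangle$, and estimating $\int\langle\beta\rangle^{-a}(\cdots)\,d\beta$ by Lemma \ref{lem1}, Lemma \ref{lem1.1} and Lemma \ref{lem2} when $r\geq t$, or by Lemma \ref{lem3} and Lemma \ref{lem3.1} when $t\geq r$; in both situations the logarithmic factors cost only a bounded number of extra powers of $\log(2+\alpha)$. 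The remaining $\alpha$-integral is of the form $\int_{|t-r|}^{t+r}\langle\alpha\rangle^{-b}\{\log(2+\alpha)\}^{\ell}\,d\alpha$ and is again handled by Lemma \ref{lem1} and Lemma \ref{lem1.1}. The key point here is the identity $\gamma_S(p,3+\mu)=2-2p\eta$ (a direct computation from \eqref{gSdef} and \eqref{dai89}): since $1<p\leq p_S(3+\mu)$ gives $p\eta\leq1$, i.e. $\gamma_S(p,3+\mu)\geq0$, the exponents are arranged so that the only growth in $t$ produced when the $\langle\alpha\rangle^{-p\eta}$-type factors are integrated up to $\alpha\sim t$ is $\langle t\rangle^{\gamma_S(p,3+\mu)/2}$, with one extra logarithm at the borderline $p=p_S(3+\mu)$; using $t<T$ this matches $D_2(T)^{{p_1}/p}$. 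Recombining with the prefactor $r\,\langle r\rangle^{-\mu/2}$ reproduces $\langle r\rangle^{\mu/2}w_2(t,r)$, and collecting the sub-cases yields \eqref{dai103}.

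\textbf{Main obstacle.} The change of variables and the individual integrations are routine; the real work is the exponent bookkeeping. In each of the many sub-cases one must verify that the powers of $\langle t+r\rangle$, $\langle t-r\rangle$ and $\log$ collected after both integrations reassemble \emph{exactly} into $\langle r\rangle^{\mu/2}w_2(t,r)$ — in particular that the indices $q_2$, $q_3$ (and, for $\eta\leq0$, the $\Psi$-factor versus the $\log(2+t+r)$-factor) land in the right slots — and that the residual growth in $t$ is never worse than $D_2(T)^{{p_1}/p}$. The subtle point, absent in Lemma \ref{lem6}, is that the $v$-factor now carries only the \emph{weaker} decay rate $\eta\leq1/p$ rather than $\nu\geq1/p$, so the $\alpha$-integral is genuinely non-integrable at the critical rate and its growth has to be captured precisely, not merely shown to be finite. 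I expect the sub-case $t\geq r$, $\eta\leq0$, $p=p_S(3+\mu)$ (hence $p\nu=1$, $q_3=1$), where one simultaneously juggles the $\Psi$-factor, the $\log(2+t+r)$-factor and the $\log(2+T)$-growth for both values $p_1=1$ and $p_1=p$, to be the most delicate.
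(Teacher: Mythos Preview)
Your plan is essentially the paper's own proof: the same reduction via \eqref{eq.io1.1}, the same $(\alpha,\beta)$ coordinates, the same split into $\eta>0$ versus $\eta\le0$ and then $r\ge t$ versus $t\ge r$, and the same integration lemmas; the only parametrization difference is that the paper, for $\eta\le0$, first weakens the $u_L$-decay from $\langle t-r\rangle^{-\nu}$ to $\langle t-r\rangle^{-1/p}$ (using $p\nu\ge1$, giving $\eta_3=(p-p_1)/p$), whereas you keep the full $\nu$ --- either works. Two small slips to fix as you fill in the details: the growth produced is $\langle t\rangle^{\gamma_S(p,3+\mu)\,p_1/(2p)}$, not $\langle t\rangle^{\gamma_S(p,3+\mu)/2}$ (the paper records this as $1-p_1\eta-\eta_3=\gamma_S(p,3+\mu)\,p_1/(2p)$, cf.\ \eqref{dai106+}), and it is this $p_1$-dependent exponent that matches $D_2(T)^{p_1/p}$; also, the sub-case you flag as most delicate, $\eta\le0$ with $p=p_S(3+\mu)$, cannot occur, since $p=p_S(3+\mu)$ gives $p\eta=1$ and hence $\eta>0$.
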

%%%%%%%%%%%%%%%%%%%%%%%%%%%%%%%%%%%%%%%%%%%%%%%
\begin{proof}
At first, we put
\begin{align*}
\eta_1=p_1\eta+(p-p_1) \nu, \quad %\\ &
\eta_2=p_1 \eta +p,\quad
\eta_3=(p-p_1)/p.%\label{dai108}
\end{align*}
Noting that $1-p\eta=\gamma_S(p,3+\mu)/2$, we get
\begin{align}
&1-\eta_1=\gamma_S(p,3+\mu) p_1/(2p)-(p-p_1)(p\nu-1)/p, \label{dai106}\\
&1-p_1 \eta -\eta_3=\gamma_S(p,3+\mu) p_1/(2p).  \label{dai106+}
\end{align}
Besides, \eqref{dai89} implies
\begin{align}
(p-1)\mu/2+\eta_2=2+(1+p_1)\eta.\label{dai107}
\end{align}
In the following, let $(t,r) \in [0,T)\times (0,\infty)$,
$p\nu \ge 1$, and either $p_1=1$ or $p_1=p$.
We divide the proof into two cases.
\\
(i) \ $\eta >0$.\\
Since $\nu >0$, we have from  \eqref{weight1}, \eqref{norm1} and \eqref{eq.io2}
\begin{align}
	 \langle r \rangle^{\mu/2} \langle t+r \rangle
	\langle t-r \rangle^{\nu}|u_L(t,r)| \leq r \| u_L \|_1.\label{dai109}
\end{align}
Therefore, we get from \eqref{weight2} and \eqref{norm2} 
\begin{align*}
	\langle r \rangle^{p \mu/2 } 
	\langle t+r \rangle^{p}
	\langle t-r \rangle^{\eta_1}|G(t,r)|
	\leq r  \left\{ \log(2+|t-r|) \right\}^{p_1 q_3} 
	\| u_L \|_1^{p-p_1} \| v \|_2^{p_1}.
\end{align*}
Then, it follows from (\ref{eq.io1.1}) that
\begin{align*}
	|I_{-}(G)(t,r)|\lesssim \langle r \rangle^{-\mu/2} 
	\| u_L \|_1^{p-p_1} \| v \|_2^{p_1}  I(t,r),
\end{align*}
where we put
\begin{align*}%\label{dai50}
I(t,r):= \iint_{\Delta_{-}(t,r)}
\frac{y \left\{ \log(2+|\sigma-y|)\right\}^{p_1 q_3}}{\langle y \rangle^{(p-1)\mu/2}
\langle  \sigma +y \rangle^{p}
\langle \sigma-y \rangle^{\eta_1}} dy d\sigma.
\end{align*}
In order to get \eqref{dai103}, by \eqref{weight2} and \eqref{norm2}, it is enough to prove
\begin{align}\label{dai51}
	I(t,r)\lesssim & \frac{r  \left\{ \log(2+|t-r|)\right\}^{q_3}}{
	\langle t+r \rangle
	\langle t-r \rangle^{\eta}}
	D_2(T)^{p_1/p}.
\end{align}
To evaluate the integral, we pass to the coordinates
\begin{align*}
\alpha = \sigma+y,\quad \beta=y-\sigma,
\end{align*}
and deduce
\begin{align}\label{dai52}
	I(t,r) &\lesssim \int_{|r-t|}^{t+r} 
	\int_{r-t}^{\alpha}
	\frac{ \left\{ \log(2+|\beta|)\right\}^{p_1 q_3}}
	{\langle \alpha+\beta \rangle^{(p-1)\mu/2-1}
	\jb{\alpha}^{p} 	\langle \beta \rangle^{\eta_1}} d\beta d\alpha.
\end{align}

%%%%%%%%%%%%
First, suppose $r \geq t$.
Then, we get from \eqref{dai52} and \eqref{dai89}
\begin{align}\label{dai53}
	I(t,r) 
	&\lesssim \int_{|r-t|}^{t+r} 
	\frac{ \left\{ \log(2+\alpha)\right\}^{p_1 q_3}}{\langle \alpha \rangle^{1+\eta}} d \alpha \times
	\int_{r-t}^{t+r} \frac{1}{\langle \beta \rangle^{\eta_1}}
	d\beta.
\end{align}
Since $\eta>0$, we have from Lemma \ref{lem1} and  Lemma \ref{lem1.1}
\begin{align}
\int_{|r-t|}^{t+r} 
	\frac{ \left\{ \log(2+\alpha)\right\}^{p_1 q_3}}{\langle \alpha \rangle^{1+\eta}} d \alpha
	\lesssim& \frac{t \left\{ \log(2+|r-t|)\right\}^{q_3} }{\langle t+r \rangle \langle t-r \rangle^{\eta}}
	 \left\{ \log(2+t+r)\right\}^{(p_1-1)q_3}.\label{dai54}
\end{align}
On the other hand, to evaluate the $\beta$-integral, we shall use the following facts
about $\eta_1$ which can be deduced by \eqref{dai106}:
When $p<p_S(3+\mu)$, $\eta_1=1-\gamma_S(p,3+\mu) p_1/(2p)+(p-p_1)(p\nu-1)/p \ge 1-\gamma_S(p,3+\mu) p_1/(2p)$.
When $p=p_S(3+\mu)$, $p_1=1$ and $p\nu>1$, $\eta_1=1+(p-1)(p\nu-1)/p >1$.
And, when $p=p_S(3+\mu)$ and either $p_1=p$ or $p\nu=1$, $\eta_1=1$. 
These facts lead to
\begin{align} \nonumber
\int_{r-t}^{t+r} \frac{1}{\langle \beta \rangle^{\eta_1}}
	d\beta 
 &	\lesssim 
	\left\{
	\begin{array}{ll}
	\langle t+r \rangle^{\gamma_S(p,3+\mu)  p_1/(2p)} 
	& (p < p_S(3+\mu)),\\
	1 & (p=p_S(3+\mu),\ p_1=1 \ \mbox{and} \ p \nu >1),\\
	\log(2+t+r) & (p=p_S(3+\mu), \ \mbox{either} \ p_1=p \ \mbox{or} \ p \nu =1)
	\end{array}
	\right.
\\  \label{dai55}
 & = \langle t+r \rangle^{\gamma_S(p,3+\mu)  p_1/(2p)} \left\{ \log(2+t+r)\right\}^{q_4},
\end{align}
where we defined
$$
q_4=	\left\{
	\begin{array}{ll}
	0 & (\mbox{either}\ p < p_S(3+\mu)
	\ \mbox{or} \ p=p_S(3+\mu),\ p_1=1 \ \mbox{and} \ p \nu >1), \\
	1 & (p=p_S(3+\mu), \ \mbox{either} \ p_1=p \ \mbox{or} \ p \nu =1).
	\end{array}
	\right.
$$
%because \eqref{dai106} implies that $\eta_1>1$ for $p=p_S(3+\mu)$, $p_1=1$ and $p \nu>1$, and 
%$\eta_1 \geq 1-\gamma_S(p,3+\mu)p_1/(2p)$.
Therefore, from \eqref{dai53}, \eqref{dai54} and \eqref{dai55}, we find
\begin{align}\label{dai55+}
	I(t,r) 
	\lesssim  \frac{t  \left\{ \log(2+|r-t|)\right\}^{q_3}
	}
	{\langle t-r \rangle^{\eta}} \times
	\frac{\left\{ \log(2+t+r)\right\}^{(p_1-1)q_3+q_4} }
	{\langle t+r \rangle^{1-\gamma_S(p,3+\mu)p_1/(2p)}}.
	%\\ \nonumber
	%& \times 
	%\left\{
	%\begin{array}{ll}
	%1 & (p < p_S(3+\mu)),\\
	%\left\{ \log(2+t+r)\right\}^{q_4} & (p=p_S(3+\mu)),
	%\end{array}
	%\right.
\end{align}
%where we put
%\begin{align*}
%q_4=\max\{(p_1-1)/(p-1), q_3\}.
%\end{align*}
We note that if $r_1>0$, $r_2\geq0$, then $g(s)
=s^{-r_1} \left\{ \log (1+s) \right\}^{r_2}$ \ $(s \geq 1)$ is decreasing for large $s$.
Since $1-\gamma_S(p,3+\mu)p_1/(2p)
%=\eta_1-(p-p_1)(p\nu-1)/p
=p_1\eta+(p-p_1)/p
> \eta>0$,
by recalling \eqref{defD2}, we then get
\begin{align*}
		\frac{\langle t \rangle \left\{ \log(2+t+r)\right\}^{(p_1-1)q_3+q_4} }
	{\langle t+r \rangle^{1-\gamma_S(p,3+\mu)p_1/(2p)}} 
      %\times 
	%\left\{
	%\begin{array}{ll}
	%1 & (p < p_S(3+\mu)),\\
      %\left\{ \log(2+t+r) \right\}^{q_4} & (p=p_S(3+\mu))
	%\end{array}
	%\right.
	\lesssim D_2(t)^{p_1/p}.
	\end{align*}
Moreover, for $r \ge t$, we have
$$
\frac{t}{\langle t \rangle} \lesssim \frac{r}{\langle t+r \rangle}.
$$
Indeed, if $r \geq 1$, then we have 
$r \sim \jb{t+r}$, so that the estimate holds.
On the other hand, if $0\leq r \leq 1$, then $\jb{t+r} \sim 1$,
thus we obtain the needed estimates. 
Now, 
from (\ref{dai55+}) %, (\ref{dai54}), and (\ref{dai55})
%\begin{align*}
%	I(t,r) 
%	\lesssim \frac{r  \left\{ \log(2+|r-t|)\right\}^{q_3}}{\langle t+r \rangle
%	\langle t-r \rangle^{\eta}} D_2(T)^{p_1/p}.
%\end{align*}
%Hence we obtain 
we get (\ref{dai51}) for $r \geq t$.

%%%%%%%%%%%%%%%%%%%%%%%%%%%%%
Next, suppse $t \geq r$.
We get from (\ref{dai52}) and \eqref{dai89}
\begin{align*}
	I(t,r) 
	&\lesssim \int_{t-r}^{t+r} 
	\frac{ \left\{ \log(2+\alpha)\right\}^{p_1 q_3}}{\jb{\alpha}}
	\int_{-\alpha}^{\alpha}
	\frac{1}
	{\langle \alpha+\beta \rangle^{\eta}
	\langle \beta \rangle^{\eta_1}} d\beta d\alpha.
\end{align*}
Since $0<\eta <1-\gamma_S(p.3+\mu) p_1/(2p) \leq \eta_1$, it follows from Lemma \ref{lem3.1} with $k_1=\eta$, $k_2=0$, $k_3=1-\gamma_S(p,3+\mu)p_1/(2p)-\eta$ (resp. $k_3=\eta_1-\eta$) when  $p<p_S(3+\mu)$ 
(resp. $p=p_S(3+\mu)$) that
\begin{align*}
\int_{-\alpha}^{\alpha}\frac{1}{\jb{\alpha+\beta}^{\eta} \jb{\beta}^{\eta_1}} d\beta
\lesssim \jb{\alpha}^{-\eta}
\jb{\alpha}^{\gamma_S(p,3+\mu) p_1/(2p)}
\left\{ \log(2+ \alpha) \right\}^{q_4}.
%\left\{
%\begin{array}{ll}
%\jb{\alpha}^{\gamma_S(p,3+\mu) p_1/(2p)} & (p<p_S(3+\mu)),\\
%\left\{ \log(2+ \alpha) \right\}^{q_4} & (p=p_S(3+\mu)).
%\end{array}
%\right.
\end{align*}
Therefore, we get
from \eqref{defD2}, Lemma \ref{lem1} and Lemma \ref{lem1.1}
\begin{align*}
	I(t,r) 
	&\lesssim 
	     \int_{t-r}^{t+r} \frac{\jb{\alpha}^{\gamma_S(p,3+\mu) p_1/(2p)}  \left\{  \log(2+ \alpha)\right\}^{p_1 q_3+q_4}}
	{\jb{\alpha}^{1+\eta}} d \alpha 
%	\left\{ 
%        \begin{array}{ll}
%       \displaystyle 
%        \int_{t-r}^{t+r} \frac{\jb{\alpha}^{\gamma_S(p,3+\mu) p_1/(2p)}  \left\{  \log(2+ \alpha)\right\}^{p_1 q_3}}
%	{\jb{\alpha}^{1+\eta}} d \alpha & (p<p_S(3+\mu)),\\
%	\displaystyle 
%	 \int_{t-r}^{t+r} \frac{\left\{ \log(2+ \alpha)\right\}
%	 ^{p_1 q_3+q_4}}
%	{\jb{\alpha}^{1+\eta}} d \alpha & (p=p_S(3+\mu))
%\end{array}
%\right.
\\
	&\lesssim D_2(t+r)^{p_1/p}
	\int_{t-r}^{t+r} \frac{\left\{ \log(2+ \alpha)\right\}^{q_3}}
	{\jb{\alpha}^{1+\eta}} d \alpha\\
	&\lesssim
	\frac{ r \left\{ \log(2+|t-r|)\right\}^{q_3}}{\langle t+r \rangle
	\langle t-r \rangle^{\eta}} D_2(T)^{p_1/p}.
\end{align*}
Hence we obtain (\ref{dai51}).\\
%%%%%%%%%%%%%%%%%%%%%%%%%%%%%%%%%%%%%%
(ii) \ $\eta \leq 0$.\\
Since $p\nu \geq 1$, we obtain from \eqref{dai109}
\begin{align*}
	 \langle r \rangle^{\mu/2} \langle t+r \rangle
	\langle t-r \rangle^{1/p}|u_L(t,r)| \leq r \| u_L \|_1.
\end{align*}
Similarly to the argument in the case of $\eta>0$, we obtain 
% Since $p \nu \geq 1$, we obtain from \eqref{dai108}, \eqref{dai109}, \eqref{weight2} and \eqref{norm2}
%\begin{align*}
%	\langle r \rangle^{p \mu/2 } 
%	\langle t+r \rangle^{\eta_2} \langle t-r \rangle^{\eta_3}|G(t,r)|
%	\leq & r
%	\left\{
%	\Psi(t-r,t+r)
%	\right\}^{p_1q_2}
%	\left\{ \log(2+t+r) \right\}^{p_1 q_3} \\
%	& \times \|  u_L \|_1^{p-p_1} \| v \|_2^{p_1}.
%\end{align*}
%It follows from (\ref{eq.io1.1}) that
\begin{align*}
	|I_{-}(G)(t,r) |\lesssim \langle r \rangle^{-\mu/2}  
	\| u_L \|_1^{p-p_1} \| v \|_2^{p_1} I(t,r),
\end{align*}
where we put
\begin{align} \nonumber %\label{dai56}
I(t,r)& := \iint_{\Delta_{-}(t,r)}
\frac{y 
\left\{
 \Psi(\sigma-y,\sigma+y)
	\right\}^{p_1 q_2} 
\left\{\log(2+\sigma+y)\right\}^{p_1 q_3}}
{\langle y \rangle^{(p-1)\mu/2}
\langle \sigma+y \rangle^{\eta_2}
\langle \sigma-y \rangle^{\eta_3}
} dy d\sigma
\\
\label{dai58}
       &\lesssim \int_{|r-t|}^{t+r} 
	\int_{r-t}^{\alpha}
	\frac{(\alpha+\beta) 
	\left\{
	\Psi(\beta, \alpha)
	\right\}^{p_1 q_2}
	\left\{ \log(2+\alpha) \right\}^{p_1 q_3}}
	{\langle \alpha+\beta \rangle^{(p-1)\mu/2}
	\langle \alpha \rangle^{\eta_2}
	\langle \beta \rangle^{\eta_3} } d\beta d\alpha.
\end{align}
We shall show
\begin{align}
	I(t,r)\lesssim& \frac{r\left\{
	\Psi(t-r,t+r)
	\right\}^{q_2} \left\{ \log (2+t+r) \right\}^{q_3}}{
	\langle t+r \rangle^{1+\eta}} D_2(T)^{p_1/p}
	.\label{dai57}
\end{align}
%To evaluate the integral, we pass to the coordinates
%\begin{align*}
%\alpha = \sigma+y,\quad \beta=y-\sigma,
%\end{align*}
%and deduce
%\begin{align}\label{dai58}
%	I(t,r) &\lesssim \int_{|r-t|}^{t+r} 
%	\int_{r-t}^{\alpha}
%	\frac{(\alpha+\beta) 
%	\left\{
%	\Psi(\beta, \alpha)
%	\right\}^{p_1 q_2}
%	\left\{ \log(2+\alpha) \right\}^{p_1 q_3}}
%	{\langle \alpha+\beta \rangle^{(p-1)\mu/2}
%	\langle \alpha \rangle^{\eta_2}
%	\langle \beta \rangle^{\eta_3} } d\beta d\alpha.
%\end{align}

First, suppose $r/2 \geq t$.
Since $\eta>-1$ for $p>1$, we have $\eta_2=p_1\eta+p \ge 0$.
Besides, we have $\eta_3 \geq 0$.
Therefore, % and \eqref{dai107}, 
we get from (\ref{dai58}), \eqref{dai107},  and Lemma \ref{lem2}
\begin{align*}
	I(t,r) 
	&\lesssim \frac{r \left\{ \log(2+t+r) \right\}^{p_1 q_3}}
	{\langle r-t \rangle^{2+(1+p_1) \eta+\eta_3}} \int_{r-t}^{t+r} 
	\int_{r-t}^{\alpha}
	\left\{
	\Psi(\beta,\alpha)
	\right\}^{p_1 q_2}
	d\beta d\alpha\\
	&\lesssim \frac{r t^2 \left\{ \log(2+t+r) \right\}^{p_1 q_3}}
	{\langle r-t \rangle^{2+(1+p_1) \eta+\eta_3}} \\
        &\lesssim \frac{r\jb{t}^2 \left\{ \log(2+t+r) \right\}^{q_3}}{\langle t+r \rangle^{2+(1+p_1) \eta+\eta_3}}
         \left\{ \log(2+t+r) \right\}^{(p_1-1) q_3}\\
         &\lesssim \frac{r \left\{ \log(2+t+r) \right\}^{q_3}}{\langle t+r \rangle^{1+\eta}}
         \langle t \rangle^{1-p_1\eta-\eta_3}\left\{ \log(2+t) \right\}^{(p_1-1) q_3},
\end{align*}
because $(r+t)/3 \leq r-t$ for $r/2 \ge t$, and $1+p_1 \eta+\eta_3>0$
which follows from the equality $1+p\eta=(p-1)((\mu/2+1)p-1)$.
Then, from \eqref{dai106+}, we obtain \eqref{dai57}.

Next, suppose $t \geq r/2$.
We note that $\eta \leq 0$ leads to $(p-1)\mu/2 \leq  1$.
Then, we get from (\ref{dai58})
\begin{align*}
	I(t,r) 	
\lesssim \int_{|r-t|}^{t+r} 
	\int_{-\alpha}^{\alpha}
	\frac{ \langle \alpha \rangle^{1-(p-1)\mu/2}
	\left\{
	\Psi(\beta, \alpha)
	\right\}^{p_1 q_2}
	\left\{ \log(2+\alpha) \right\}^{p_1 q_3}}
	{\langle \alpha \rangle^{\eta_2}
	\langle \beta \rangle^{\eta_3} } d\beta d\alpha.
\end{align*}
Since $\left\{ \Psi(\beta,\alpha) \right\}^{p_1q_2}
/\jb{\beta}^{\eta_3}$ is an even function of $\beta$, we obtain 
\begin{align*}
	I(t,r) 
	\lesssim \left\{ \log(2+t+r) \right\}^{p_1 q_3} \int_{|r-t|}^{t+r} 
	\frac{1}{\langle \alpha \rangle^{1+(1+p_1) \eta}}
	\int_{0}^{\alpha}
	\frac{\left\{
	\Psi(\beta,\alpha)
	\right\}^{p_1q_2}}{\langle \beta \rangle^{\eta_3}}
	d\beta d\alpha,
\end{align*}
by \eqref{dai107}.
When $p_1=p$, we have $\eta_3=0$, so that  Lemma \ref{lem2} can be applied.
On the other hand, when $p_1=1$, we have $0<\eta_3<1$ and use the following inequality
\begin{align}
\int_{0}^{\alpha}
\log  \left( \frac{1+\alpha}{1+\beta} \right) (1+\beta)^{-\eta_3} d \beta 
&\lesssim(1+\alpha)^{1-\eta_3}, \quad \alpha \geq 0, \ 0<\eta_3<1,
\label{dai70+}
\end{align}
which is verified by the integration by parts.
Then we get from \eqref{dai96} and \eqref{dai70+}
\begin{align}
\int_{0}^{\alpha}
	\frac{
	\left\{ \Psi(\beta,\alpha) \right\}^{p_1 q_2}}{\jb{\beta}^{\eta_3}}
	d\beta \lesssim \jb{\alpha}^{1-\eta_3}.\label{dai70}
\end{align}
Hence, recalling \eqref{dai106+}, we get from \eqref{dai70} and Lemma \ref{lem1}
\begin{align*}
I(t,r) 
&\lesssim  \left\{ \log(2+t+r) \right\}^{p_1 q_3}  \int_{|r-t|}^{t+r} 
	\frac{1}{\langle \alpha \rangle^{1+(1+p_1) \eta-1+\eta_3}}d\alpha\\
&\lesssim \left\{ \log(2+t+r) \right\}^{p_1 q_3}
\langle t+r \rangle^{1-p_1 \eta-\eta_3}
\int_{|r-t|}^{t+r} 
	\frac{1}{\langle \alpha \rangle^{1+\eta}}d\alpha\\
	&\lesssim  \frac{r 
	\left\{ \Psi(t-r,t+r) \right\}^{q_2}
	\left\{ \log(2+t+r) \right\}^{p_1 q_3}
	}{\langle t+r \rangle^{1+\eta}} (1+t)^{\gamma_S(p,3+\mu)p_1/(2p)}\\
	&\lesssim  \frac{r \left\{ \Psi(t-r,t+r) \right\}^{q_2}\left\{ \log(2+t+r) \right\}^{ q_3}
	}{\langle t+r \rangle^{1+\eta}} D_2(T)^{p_1/p}.
\end{align*}
Therefore we obtain \eqref{dai57}. This completes the proof.
\end{proof}
%%%%%%%%%%%%%%%%%%%%%%%%%%%%%%%%%%%%%%
%%%%%%%%%%%%%%%%%%%%%%%%%%%%%%%%%%%%%%%%%%%%%

\noindent
{\bf End of the proof of Theorem \ref{bw3} with $p \nu \geq 1$}.\,
Let $Y$ be the linear space defined by
\begin{align*}
Y=\left\{ v(t,r) \in C([0,T) \times (0,\infty)) \ ; \ \| v \|_{2} < \infty \right\}.
\end{align*}
We shall construct a local solution of integral equation \eqref{dai92} in the Banach space $(Y,\| \cdot \|_2)$. We define the sequence of functions $\{ v_n\}$ by
\begin{align*}
v_{1}=0,\quad  v_{n+1}=I_{-}(|\varepsilon u_{L} + v_n|^{p}/r^{p-1})
\quad (n=1,2,3,\cdots).
\end{align*}
We set
\begin{align*}
M_0&=2^{p-1} \widetilde{C}_1 \widetilde{C}_0^p,\\
\widetilde{C}_3&=(2^{2(p+1)}p)^p \max \{\widetilde{C}_2 M_0^{p-1}, (\widetilde{C}_2 \widetilde{C}_0^{p-1})^p \},
\end{align*}
where $\widetilde{C}_i$ \ $(0\leq i \leq 2)$ are positive constants given in Lemma \ref{fil1}, Lemma \ref{lem8} and Lemma \ref{lem9}. 
Then, analogously to the proof of Theorem 2.1 in \cite{KTW19} and Theorem 2.2 in \cite{IKTW20}, we see that $\{ v_n \}$ is a Cauchy sequence in $Y$ provided that the 
inequality
\begin{align}
\widetilde{C}_3 \varepsilon^{p(p-1)}D_2(T) \leq 1\label{dai93}
\end{align}
holds, where $D_2(T )$ is defined in \eqref{defD2}. Since $Y$ is complete, there exists a function $v \in Y$ such that $v_n$ converges to $v$ in $Y$. Therefore $v$ satisfies \eqref{dai92}.

Note that \eqref{llifespan} follows  from (\ref{dai93}). We shall show this fact only in the case of $1<p<p_{S}(3+\mu)$ and 
$p\nu=1$, since the other cases can be proved similarly. By definition of 
$b(\varepsilon)$ in \eqref{bdef}, we know that $b(\varepsilon)$ is decreasing in $\varepsilon$ and 
$\displaystyle \lim_{\varepsilon \to 0+0} b(\varepsilon)=\infty$.
Let us fix $\varepsilon_0 > 0$ as
\begin{align}
1<\widetilde{C}_4 b(\varepsilon_0),\label{dai94}
\end{align}
where $\widetilde{C}_4:=\min\{2^{-1}, (2 \widetilde{C}_3^{2/\gamma_S(p,3+\mu)})^{-1}\}$. For $0<\varepsilon \leq \varepsilon_0$,
we take $T$ to satisfy
\begin{align}
1 \leq T <\widetilde{C}_4  b(\varepsilon), \label{dai95}
\end{align}
so that $1+T \le 2T \le 2\widetilde{C}_4  b(\varepsilon)$.
Then, since $q_3=1$ by $p\nu=1$, $\widetilde{C}_3 (2\widetilde{C}_4)^{\gamma_S(p,3+\mu)/2} \leq 1$, 
and $2\widetilde{C}_4 \leq 1$, it follows that %from \eqref{dai95} that
\begin{align*}
\widetilde{C}_3 \varepsilon^{p(p-1)}D_2(T )
&\leq \widetilde{C}_3 \varepsilon^{p(p-1)}(2T)^{\gamma_S(p,3+\mu)/2} \left\{ \log(1+2T) \right\}^{p-1}\\
&\leq \widetilde{C}_3 (2\widetilde{C}_4)^{\gamma_S(p,3+\mu)/2}\varepsilon^{p(p-1)}
b(\varepsilon)^{\gamma_S(p,3+\mu)/2} \left\{ \log(1+2\widetilde{C}_4 
b(\varepsilon) ) \right\}^{p-1}\\
&\leq \varepsilon^{p(p-1)}
b(\varepsilon)^{\gamma_S(p,3+\mu)/2}  \left\{ \log(1+b(\varepsilon) ) \right\}^{p-1}\\
& = 1,
\end{align*}
by \eqref{bdef}.
Hence, if we assume \eqref{dai94} and \eqref{dai95}, then \eqref{dai93} holds. Therefore \eqref{llifespan} in the case is obtained for $0<\varepsilon \leq \varepsilon_0$. This completes the proof.

\section{Upper bounds of the lifespan}
%%%%%%%%%%%%%%%%%%%%%%%%%%%%%%%%%%
Let $u$ denote the solution of the problem \eqref{eq.io0}
in what follows.
When $\varphi\equiv 0$ and $\psi \geq 0$,
it follows from \eqref{eq.io0}, \eqref{eq.io00} and \eqref{eq.le2} that
\begin{align} \label{eq.lb2}
& u(t,r) \gtrsim \varepsilon u_L(t,r)
+\widetilde{I_-}(|u|^p/y^{p-1})(t,r),
\\  \label{eq.lb20}
& u_L(t,r) \gtrsim \widetilde{J_-}(\psi)(t,r)
\end{align}
hold for $t$, $r>0$, where we put
\begin{align}\label{eq.lb2a}
& \widetilde{I_-}(F)(t,r) =  \iint_{\Delta_-(t,r)} 
\frac{\langle -t+\sigma +r+ y \rangle^{\mu}}{\langle r \rangle^{\mu/2} \langle y \rangle^{\mu/2}}
F(\sigma, y) dy d\sigma,
\\
\label{eq.lb2b}
& \widetilde{J_-}(\psi)(t,r) = \int_{ |t-r| }^{t+r} 
\frac{\langle r-t + y \rangle^{\mu}}{\langle r \rangle^{\mu/2} \langle y \rangle^{\mu/2}}
\psi(y) dy.
\end{align}

Our first step is to obtain basic lower bounds of 
$u_L$ defined by \eqref{eq.io00}.

\begin{lm}\label{Lemma3}
Assume that \eqref{hyp1} holds.
Then there exists $M=M(\mu,\kappa)>0$ such that
\begin{align*}
u_L(t,r)\ge \frac{M}{r^{\mu/2}(t-r)^{\nu} }
\quad\mbox{for}\ t\ge r+1  \ \mbox{and} \ t\le 2r.
\end{align*}
Here $\nu$ is defined by \eqref{dai98}, i.e., $\nu=\kappa-(\mu/2)-1$.
\end{lm}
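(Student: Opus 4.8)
The plan is to estimate from below the first (integral) term in the formula \eqref{eq.io00} for $u_L$, discarding the boundary term $\chi(r-t)E_-(t,r,r-t)\varphi(r-t)$, which vanishes because $\varphi\equiv 0$ and in any case because $t\ge r+1$ forces $r-t<0$. Thus it suffices to bound below
\[
u_L(t,r)=\frac12\int_{t-r}^{t+r} E_-(t,r,y)\,\psi(y)\,dy,
\]
using $\varphi\equiv 0$ again to drop $\varphi'$ and $w\varphi$. By the hypothesis \eqref{hyp1} we have $\psi(y)\ge(1+y)^{-\kappa}\gtrsim\langle y\rangle^{-\kappa}$, and by \eqref{eq.le2} we have the two-sided bound $E_-(t,r,y)\sim\langle r-t+y\rangle^\mu\langle r\rangle^{-\mu/2}\langle y\rangle^{-\mu/2}$. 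So the task reduces to a lower bound for
\[
\int_{t-r}^{t+r}\frac{\langle r-t+y\rangle^{\mu}}{\langle r\rangle^{\mu/2}\langle y\rangle^{\mu/2}}\,\langle y\rangle^{-\kappa}\,dy.
\]

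First I would restrict the $y$-integration to a subinterval on which all factors are easy to control. The natural choice, given the constraint $t\le 2r$ (so that $t-r\le r$ and $t+r\ge 3(t-r)$ when $t\ge r$), is to integrate over $y\in[t-r,\,2(t-r)]$ or similar; on such an interval one has $y\sim t-r$, hence $\langle y\rangle\sim\langle t-r\rangle$, and $r-t+y$ ranges over an interval of length $\sim t-r$ starting at $0$. This gives
\[
\int_{t-r}^{2(t-r)}\frac{\langle r-t+y\rangle^{\mu}}{\langle r\rangle^{\mu/2}\langle y\rangle^{\mu/2+\kappa}}\,dy
\gtrsim\frac{1}{\langle r\rangle^{\mu/2}\langle t-r\rangle^{\mu/2+\kappa}}\int_0^{t-r}\langle s\rangle^\mu\,ds
\gtrsim\frac{(t-r)^{\mu+1}}{r^{\mu/2}(t-r)^{\mu/2+\kappa}}=\frac{(t-r)^{\mu/2+1-\kappa}}{r^{\mu/2}}.
\]
Since $t-r\ge 1$ we may replace $\langle t-r\rangle$ by $(t-r)$ up to constants, and $\langle r\rangle$ by $r$ since $r\ge(t-r)\ge 1$ under $t\ge r+1$. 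Recalling $\nu=\kappa-\mu/2-1$ from \eqref{dai98}, the exponent $\mu/2+1-\kappa$ equals $-\nu$, so the right-hand side is exactly $r^{-\mu/2}(t-r)^{-\nu}$, which is the claimed bound; the constant $M$ depends only on $\mu$ and $\kappa$ through the implied constants in \eqref{eq.le2} and the elementary integral of $\langle s\rangle^\mu$.

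The main thing to be careful about — rather than a deep obstacle — is the bookkeeping of the region of integration so that the comparisons $y\sim t-r$ and $r\sim\langle r\rangle$ are legitimate. The condition $t\le 2r$ is what guarantees $t+r\ge 3(t-r)$ when $t\ge r$, ensuring the subinterval $[t-r,2(t-r)]$ (or a comparable one) actually lies inside $[t-r,t+r]$; and $t\ge r+1$ is what lets us pass freely between $\langle\cdot\rangle$ and the bare variable. One should also note the degenerate possibility that $t-r$ is small: here $t-r\ge 1$ rules it out, so no separate small-argument case is needed. The rest is the routine integral $\int_0^{t-r}\langle s\rangle^\mu ds\gtrsim(t-r)^{\mu+1}$, valid for all $\mu\ge 0$. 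I do not anticipate any genuine difficulty beyond assembling these elementary pieces.
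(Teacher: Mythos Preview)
Your proof is correct and follows essentially the same approach as the paper: reduce $u_L$ to the integral term via $\varphi\equiv 0$, insert the lower bounds from \eqref{hyp1} and \eqref{eq.le2}, and estimate the resulting integral. The only difference is cosmetic: the paper keeps the full range $[t-r,t+r]$ and invokes Lemma~\ref{KO} with $a=t-r$, $b=t+r$, $\rho_1=\kappa+\mu/2$, $\rho_2=\mu$ to obtain the bound $\gtrsim (t-r)^{-\nu}(1-(t-r)/(t+r))^{\mu+1}$, then uses $t\le 2r$ to control the last factor by a constant; you instead restrict to the subinterval $[t-r,2(t-r)]$ and compute directly, which amounts to redoing the content of Lemma~\ref{KO} inline for the special case needed here. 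Both routes are equally elementary and yield the same constant dependence on $\mu,\kappa$.
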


\begin{proof}
Let $t\ge r+1$ and $t\le2r$.
By \eqref{hyp1}, \eqref{eq.lb20} and \eqref{eq.lb2b}
we have
\begin{align} \notag
u_L(t,r)\gtrsim
& \int_{t-r}^{t+r}
\frac{\langle r-t+y\rangle^\mu}{ \langle r\rangle^{\mu/2} \langle y 
\rangle^{\mu/2}}
\frac{1}{(1+y)^\kappa}
dy
\\ \notag
\gtrsim & 
\frac{1}{\langle r\rangle^{\mu/2}}
\int_{t-r}^{t+r}
\frac{(y-t+r)^\mu}{y^{\kappa+(\mu/2)}}
dy,
\end{align}
because $y \ge t-r \ge 1$.
Applying Lemma \ref{KO}, we get
\begin{align} \notag
u_L(t,r) \gtrsim \,
\frac{1}{\langle r\rangle^{\mu/2}(t-r)^{\nu}}
\left(1-\frac{t-r}{t+r}\right)^{\mu+1},
\end{align}
which implies the desired estimate, because $r \ge 1$ and 
$(t-r)/(t+r)\le1/3$ for  $t\ge r+1$, $t\le2r$.
This completes the proof.
\end{proof}

Our next step is to derive iterative lower bounds of 
the solution to \eqref{eq.io0}.

\begin{lm}\label{Lemma6}
For $T_0>1$ we set
$$
\Sigma:=\{(t, r) \in[0,\infty)\times [0,\infty); \ t\ge r+T_0, \, t\le 2r\}.
$$
Let $u(t,r)$ be the continuous solution of  \eqref{eq.io0}
and $a$, $b\ge 0$, $M_1>0$. 
If $u_L(t,r) \geq 0$ and 
\begin{align} \label{hyp2}
u(t,r)\ge \frac{M_1(t-r-T_0)^a}{r^{\mu/2}(t-r)^b}
\quad \mbox{for}\ (t,r)\in \Sigma,
\end{align}
then there exists $C>0$, which is independent of
$a$, $b$ and $M_1$, such that
\begin{align*}
u(t,r)\ge \frac{CM_1^p (t-r-T_0)^{pa+2} }{(pa+2)^2 r^{\mu/2}(t-r)^{pb+(p-1)(\mu/2 +1)} }
\quad \mbox{for}\ (t,r)\in \Sigma.
\end{align*}
\end{lm}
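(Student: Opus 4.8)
The plan is to insert the lower bound \eqref{hyp2} into the integral inequality \eqref{eq.lb2}, exploit the positivity of $u_L$ to drop the linear term, and then estimate the resulting integral of $\widetilde{I_-}$ over a sub-region of $\Delta_-(t,r)$ on which the integrand is easy to bound from below. Concretely, for $(t,r)\in\Sigma$, I would restrict the $(\sigma,y)$-integration in $\widetilde{I_-}(|u|^p/y^{p-1})(t,r)$ to the subdomain $\{(\sigma,y)\,:\, \sigma-y\ge T_0,\ y\le t-r,\ \sigma+y\le t+r\}$ (or a similar wedge) which lies in $\Delta_-(t,r)$, and on which each point $(\sigma,y)$ again belongs to $\Sigma$ so that \eqref{hyp2} applies with $(t,r)$ replaced by $(\sigma,y)$. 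On such a wedge one has $y\sim t-r$ and $\langle -t+\sigma+r+y\rangle \gtrsim \langle t-r\rangle$ (since $-t+\sigma+r+y = (r-t)+(\sigma-y)+2y \gtrsim t-r$), so the kernel factor $\langle -t+\sigma+r+y\rangle^\mu/(\langle r\rangle^{\mu/2}\langle y\rangle^{\mu/2})$ contributes a clean power $\langle t-r\rangle^{\mu/2}/\langle r\rangle^{\mu/2}$, comparable to $r^{-\mu/2}(t-r)^{\mu/2}$ for $(t,r)\in\Sigma$.

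Next I would change variables to $\alpha=\sigma+y$, $\beta=\sigma-y$ and carry out the two one-dimensional integrals. The $y$-integral (in $\alpha$, with $\beta$ essentially fixed near its extreme value) produces a factor of order $t-r$ from the Lebesgue measure of the wedge in the $\alpha$-direction, after using $y^{-((p-1)(\mu/2+1)-1)}\sim (t-r)^{-((p-1)(\mu/2+1)-1)}$; the $\beta$-integral runs over $T_0\le \beta\le t-r-\cdots$, and inserting $(\beta-T_0)^{pa}$ from the $p$-th power of \eqref{hyp2} yields
\[
\int_{T_0}^{t-r}(\beta-T_0)^{pa}\,d\beta = \frac{(t-r-T_0)^{pa+1}}{pa+1}.
\]
Combining these, the $r$-dependence collects to $r^{-\mu/2}$, the factor $M_1^p$ comes out of the $p$-th power, and the accumulated powers of $t-r$ and $t-r-T_0$ are exactly $pb+(p-1)(\mu/2+1)$ in the denominator and $pa+2$ in the numerator, with a loss of at most $(pa+1)^2\lesssim (pa+2)^2$ in the denominator (one power from the $\beta$-integration as above, a second power either from a more careful double integration on the wedge or from comparing $t-r-T_0$ against $t-r$). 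This reproduces the claimed bound with an absolute constant $C$.

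The main obstacle I expect is bookkeeping the geometry: one must choose the sub-wedge of $\Delta_-(t,r)$ so that (i) it genuinely sits inside $\Delta_-(t,r)$, (ii) every point of it lies in $\Sigma$ (so the hypothesis \eqref{hyp2} is legitimately applicable there), and (iii) on it the lower bound $y\gtrsim t-r$ and $-t+\sigma+r+y\gtrsim t-r$ both hold, since these are what convert the kernel and the $y^{-(p-1)}$ weight into the right powers of $t-r$. Getting the constant $C$ independent of $a$, $b$, $M_1$ forces one to track where each such parameter enters: $M_1$ is pulled out cleanly, $b$ only shifts the fixed power of $t-r$ in the denominator, and $a$ survives solely through the two elementary integrals $\int(\beta-T_0)^{pa}\,d\beta$ (and possibly one more), producing the harmless $(pa+2)^{-2}$. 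The rest is the routine $\alpha,\beta$ change of variables already used repeatedly in the paper.
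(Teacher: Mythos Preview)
Your overall strategy is exactly the paper's: drop $\varepsilon u_L\ge 0$, restrict $\widetilde{I_-}$ to a sub-wedge of $\Delta_-(t,r)$ contained in $\Sigma$, use \eqref{hyp2} there, and compute in $(\alpha,\beta)=(\sigma+y,\sigma-y)$ coordinates. However, two concrete details in your plan do not work as written.

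First, the inequality on $y$ in your wedge is reversed. With $y\le t-r$ the inclusion $(\sigma,y)\in\Sigma$ fails in general (you need $\sigma\le 2y$, i.e.\ $\sigma-y\le y$, which is not implied), and your kernel estimate $-t+\sigma+r+y\gtrsim t-r$ is unjustified. The paper takes instead
\[
Q=\{(\sigma,y):\, y\ge t-r,\ \sigma+y\le 3(t-r),\ T_0\le\sigma-y\le t-r\},
\]
which sits in $\Delta_-(t,r)$ (using $t\le 2r$ to get $3(t-r)\le t+r$) and in $\Sigma$ (since $\sigma-y\le t-r\le y$ gives $\sigma\le 2y$). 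On $Q$ one has $y\sim t-r$ and $\alpha-t+r\ge t-r$, which are exactly the bounds you want.

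Second, your bookkeeping of the factor $(pa+2)^{-2}$ is not quite right. In the paper's wedge the $\alpha$-range for fixed $\beta$ is $[2(t-r)+\beta,\,3(t-r)]$, of length $(t-r)-\beta$, not of order $t-r$. This produces the weighted integral
\[
\int_{T_0}^{t-r}(\beta-T_0)^{pa}(t-r-\beta)\,d\beta=\frac{(t-r-T_0)^{pa+2}}{(pa+1)(pa+2)},
\]
which is where both powers of $(pa+2)^{-1}$ come from. Your alternative of ``comparing $t-r-T_0$ against $t-r$'' cannot manufacture the missing $1/(pa+2)$: it only trades a power of $t-r$ for one of $t-r-T_0$ with no gain in the $a$-dependent constant. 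So the ``more careful double integration'' you allude to is not optional---it is the mechanism.

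With $y\ge t-r$ in place and the $\alpha$-length tracked as $(t-r)-\beta$, your outline coincides with the paper's proof.
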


\begin{proof}
For $(t,r)\in \Sigma$, we put
$$
Q:=\{(\sigma,y) \in [0,\infty) \times[0,\infty); \ t-r\le y, \, \sigma+y
\le 3(t-r), \, T_0\le\sigma-y\le t-r\}.
$$
%%%%%%%%
In the following, let $(t,r)\in \Sigma$.
Since $Q\subset\Delta_-(t,r)$ and $Q \subset \Sigma$, we have from \eqref{eq.lb2}, \eqref{eq.lb2a} and \eqref{hyp2}
\begin{align} \notag
u(t,r)\gtrsim
& \iint_{Q}
\frac{\langle -t+\sigma+r+y\rangle^\mu}{ \langle r\rangle^{\mu/2} \langle y \rangle^{\mu/2}}
\frac{|u(\sigma,y)|^p}{y^{p-1}}
dyd\sigma
\\ \notag
\gtrsim & 
\frac{M_1^p}{\langle r\rangle^{\mu/2}}
\iint_{Q}
\frac{\langle -t+\sigma+r+y\rangle^\mu
(\sigma-y-T_0)^{pa}}
{\langle y \rangle^{(p+1)\mu/2+p-1} (\sigma-y)^{pb}}
dyd\sigma.
\end{align}
Changing the variavles by $\alpha=\sigma+y$, $\beta=\sigma-y$, since $y \ge t-r \ge T_0>1$
for $(\sigma,y)\in Q$, we get
\begin{align} \notag
u(t,r)\gtrsim & \,
\frac{M_1^p}{\langle r\rangle^{\mu/2}}
\int_{T_0}^{t-r}d\beta
\int_{2(t-r)+\beta}^{3(t-r)}
\frac{(\alpha-t+r)^\mu(\beta-T_0)^{pa}}
{(\alpha-\beta)^{(p+1)\mu/2+p-1} \beta^{pb}}d\alpha
\\ \notag
\gtrsim & \,
\frac{M_1^p}{\langle r\rangle^{\mu/2} (t-r)^{pb+(p-1)(\mu/2+1)}}
\int_{T_0}^{t-r}
(\beta-T_0)^{pa} (t-r-\beta)
d\beta
\\ \notag
= & \,
\frac{M_1^p (t-r-T_0)^{pa+2}}{ (pa+1)(pa+2)
\langle r\rangle^{\mu/2} (t-r)^{pb+(p-1)(\mu/2+1)}},
\end{align}
by the integration by parts.
Thus we get the desired estimate,
because $r \ge T_0>1$.
This completes the proof.
\end{proof}

%%%%%%%
\noindent
{\bf End of the proof of Theorem \ref{bw1}}.\,
We divide the argument into three cases.\\
(i) \ $1<p<p_F(\kappa)$.

From \eqref{eq.lb2} and Lemma \ref{Lemma3}
\begin{align*}
	u(t,r)\geq \frac{M \varepsilon (t-r-T_0)^{\mu/2+1} }{r^{\mu/2} (t-r)^{\kappa}}
	\quad \mbox{for} \ (t,r) \in \Sigma.
\end{align*}
Therefore, by Lemma \ref{Lemma6}, we obtain
\begin{align} \label{LB1}
u(t,r)\ge
\frac{C_n (t-r-T_0)^{a_n} }{r^{\mu/2}(t-r)^{b_n} }
\quad \mbox{for}\ (t,r)\in \Sigma, \ n=0,1,\cdots,
\end{align}
where we put
\begin{align*}
& a_{n+1}=pa_n+2,\ a_0=\mu/2+1, \quad
\\
& b_{n+1}=pb_n+(\mu/2+1)(p-1),\ b_0=\kappa,
\\
& C_{n+1}\ge \frac{C\, C_n^p}{(pa_n+2)^2},\ C_0=
M\varepsilon,
\end{align*}
where $C$ is the number from Lemma \ref{Lemma6}.
It is easy to see that
$$
a_n=\left( \frac{2}{p-1}+\frac{\mu}{2}+1 \right)p^n-\frac{2}{p-1}, \quad b_n=(\kappa+\mu/2+1)
p^n-(\mu/2)-1,
$$
and hence there exists a constant $0<D<1$ such that 
$$
C_{n+1}\ge \frac{C\, C_n^p}{(a_{n+1})^{2}}
\ge \frac{DC_n^p}{p^{2n}}.
$$
Then, we get
\begin{align} \notag
\log C_n
& \ge \log D\sum_{j=0}^{n-1}p^j
-(2\log p)\sum_{j=1}^{n-1}jp^{n-j-1}+p^n
\log C_0
\\ \notag
& \ge p^n \left\{ \frac{\log D}{p-1} -(2\log p) \sum_{j=1}^{\infty}j p^{-j-1}+
\log (M\varepsilon) \right\}
\\  \notag
& \ge p^n \log (\varepsilon E) 
\end{align}
with a suitable positive constant $E$. 
Therefore, from \eqref{LB1} we get
\begin{align} \label{LB2}
u(t,r)\ge \frac{(t-r-T_0)^{-2/(p-1)}(t-r)^{\mu/2+1}}
{r^{\mu/2}}\exp (p^n \log J(t,r))
\end{align}
for $ (t,r) \in \Sigma$, where we set
\begin{align*} %\label{LB3}
J(t,r)=\varepsilon E(t-r-T_0)^{2/(p-1)+\mu/2+1}(t-r)^{-\kappa-(\mu/2)-1}.
\end{align*}
Let $(t,r)=(\tau,\tau/2)$ and $\tau\ge 4T_0$.
Then we get $t-r-T_0\ge \tau/4$ and $(t,r)\in\Sigma$, so that
$J(\tau,\tau/2) \ge \varepsilon E_1 \tau^{\gamma_F(p,\kappa)/(p-1) }$,
where $\gamma_F(p,\kappa)=2-(p-1)\kappa$ and we put $E_1=2^{\kappa-4/(p-1)-(\mu/2)-1}E$.
Now we specify
$$
\tau= (2^{-1}E_1 \varepsilon)^{-(p-1)/\gamma_F(p,\kappa)}.
$$
Since $\gamma_F(p,\kappa)>0$ for $p<p_F(\kappa)$, we may assume
$\tau\ge 4T_0$, by choosing $\varepsilon$ suitably small.
Thus we get $J(\tau,\tau/2)\ge 2$.
Then, we see from \eqref{LB2} that 
$u(\tau,\tau/2) \to \infty$ as $n \to \infty$.
This means that the lifespan $T(\varepsilon)\le \tau$, which yields the desired estimate.\\
%$$
%\varepsilon_0= (8T_0)^{-\gamma_F(p,\kappa)/(p-1)}
%E_1^{-1},
%$$
%where $E_1=2^{\kappa-4/(p-1)-(\mu/2)-1}E$ and let $0<\varepsilon\le \varepsilon_0$ in the following.
%Suppose that $T(\varepsilon)> \tau$.
%%%%%%%%%%%%%%%
(ii) \ $1<p<p_3(3+\mu)$ and $p \nu=1$.

Since $p \nu =1$, we get from \eqref{eq.lb2} and Lemma \ref{Lemma3}
\begin{align}
	u(t,r)\geq \frac{M \varepsilon}{r^{\mu/2} (t-r)^{1/p}}
	\quad \mbox{for} \ r+1 \le t \le 2r.
	%(t,r) \in \Sigma.
\label{dai114}
\end{align}
Then, similarly to the argument in the proof of Lemma \ref{Lemma6}, we obtain
\begin{align}
u(t,r) &\gtrsim \frac{\varepsilon^{p}}{\jb{r}^{\mu/2}} \int_{1}^{t-r} \int_{2(t-r)+\beta}^{3(t-r)} \frac{(\alpha-t+r)^{\mu}}{(\alpha-\beta)^{(p+1)\mu/2+p-1} \beta} d \alpha d \beta\nonumber\\
&\gtrsim \frac{\varepsilon^p}{r^{\mu/2} (t-r)^{(\mu/2+1)(p-1)}}
\int_{1}^{t-r} \frac{t-r-\beta}{\beta} d \beta
\quad \mbox{for} \ r+1 \le t \le 2r.
\label{dai110}
\end{align}
Let $(t,r) \in \Sigma$ and $T_0 > 2$ in the following. 
%For $r+T_0 \leq t \leq 2r$, using the integration by parts
Then, we have
\begin{align}
\int_{1}^{t-r} \frac{t-r-\beta}{\beta} d \beta
&=\int_{1}^{t-r} \log \beta d \beta %\nonumber\\
\geq \int_{T_0/2}^{t-r} \log \beta d \beta\nonumber\\
&\geq (t-r-T_0/2) \log(T_0/2)
 \ge  \frac{t-r}{2} \log(T_0/2).  %\log \left( \frac{T_0}{2} \right).
 \label{dai111}
\end{align}
Hence, we get from \eqref{dai110} and \eqref{dai111}
\begin{align*}
u(t,r) \ge \frac{\widetilde{M} \log(T_0/2)\varepsilon^p (t-r-T_0)}{r^{\mu/2}(t-r)^{(\mu/2+1)(p-1)}}\quad \mbox{for} \ (t,r) \in \Sigma,
\end{align*}
where $\widetilde{M}$ is a suitable positive constant.
Therefore, by Lemma \ref{Lemma6}, we obtain \eqref{LB1} with
\begin{align*}
a_{0}=1, \ b_{0}=(\mu/2+1)(p-1) \ \mbox{and} \
C_0=\widetilde{M} \log (T_0/2)  \varepsilon^p,
\end{align*}
Then, similar computation as in the previous case leads to
\begin{align*}
	u(t,r) \geq \frac{(t-r-T_0)^{-2/(p-1)}}{r^{\mu/2}}
	\exp(p^n  \log \widetilde{J}(t,r)), \ (t,r)\in\Sigma,
\end{align*}
where we set
\begin{align*}
	\widetilde{J}(t,r)=\widetilde{E} \varepsilon^p \log (T_0/2)
	(t-r-T_0)^{(p+1)/(p-1)} (t-r)^{-(\mu/2+1)p} %\label{dai112}
\end{align*}
with a suitable positive constant $\widetilde{E}$.

Let $(t,r)=(\tau,\tau/2)$ and $\tau=4T_0$.
Then we get $t-r-T_0=T_0$ and $(t,r)\in\Sigma$.
Therefore, we find a constant $\widetilde{E}_1 \in (0,1)$, which is independent of
$\varepsilon$ and $T_0$, such that
\begin{align}
	\widetilde{J}(\tau, \tau/2) \ge 2 \widetilde{E}_{1}\varepsilon^p
	(T_0/4)^{\gamma_S(p,3+\mu)/(2(p-1))} \log (T_0/2),\label{dai113}
\end{align}
where $\gamma_S(p,n)$ is defined by \eqref{gSdef}.
%%%%%%%%%%
%where $\widetilde{E}_{1}=2^{-(\mu/2+1)p-1+\gamma_s(p,3+\mu)/(p-1)} \widetilde{E}$.
%If $\widetilde{E}_1\geq 1$, then choose $T_0=T_0(\varepsilon)$ as
%\begin{align*}
%T_{0}=4b(\varepsilon).
%\end{align*}
%We take $\varepsilon$ small to satisfy
%$b(\varepsilon) \geq 1$.
%Since $T_0\geq 2$ and 
%$T_0/2=2b(\varepsilon) \geq 1+b(\varepsilon)$, we get from \eqref{dai113}
%\begin{align*}
%	\widetilde{J}(\tau,\tau/2)\geq 2 \varepsilon^p 
%	b(\varepsilon)^{\gamma_S(p,3+\mu)/(2(p-1))} \log(1+b(\varepsilon))
%	=2.
%\end{align*}
%%%%%%%%
Now we define $T_0=T_0(\varepsilon)$ by
\begin{align*}
	T_0=4 \widetilde{E}_{1}^{-2(p-1)/\gamma_S(p,3+\mu)} b(\varepsilon).
\end{align*}
Recalling the fact that 
$b(\varepsilon)$ is monotonically decreasing in $\varepsilon$ and 
%\begin{align*}
	$\displaystyle \lim_{\varepsilon \to 0+0} b(\varepsilon)=\infty$,
%\end{align*}
we may assume that $\widetilde{E}_{1}^{-2(p-1)/\gamma_S(p,3+\mu)} b(\varepsilon) \geq 1$,
by choosing $\varepsilon$ sufficiently small.
Then, we see that $T_0 > 2$ and 
\begin{align*}
	{T_0}/{2}=2\widetilde{E}_{1}^{-2(p-1)/\gamma_S(p,3+\mu)} b(\varepsilon)
	\geq 1+b(\varepsilon),
\end{align*}
because $\gamma_S(p,3+\mu)>0$ when $1<p<p_{S}(3+\mu)$. % and $p\nu=1$.
Therefore, we get from \eqref{dai113} and \eqref{bdef}
\begin{align*}
\widetilde{J}(\tau,\tau/2)
%\left\{ \widetilde{E}_{1}^{2(p-1)/\gamma_S(p,3+\mu)} (T_0/4) \right\}^{\gamma_S(p,3+\mu)/(2(p-1))} \log (T_0/2)\\
%&
 \geq 2 \varepsilon^p b(\varepsilon)^{\gamma_S(p,3+\mu)/(2(p-1))}
\log(1+b(\varepsilon))
=2.
\end{align*}
Hence, as in the previous case, we obtain the desired estimate.\\
%%%%%%%%%%
(iii) \ $p=p_{S}(3+\mu)$ and $p\nu=1$.\\  %.\\
%We note that $p=p_{S}(3+\mu)$ in this case.
For $\rho>0$, we define the following quantity:
\begin{align*}
\langle u \rangle(\rho)
=\inf\{ \jb{y}^{\mu/2} (\sigma-y)^{\eta}|u(\sigma,y)|; \ 0 \le \sigma \le 2y, \ \sigma-y \ge \rho\}.
%(\sigma,y)\in {\Sigma}(\rho)\},
\end{align*}
%where we set
%\begin{eqnarray}
%\Sigma(\rho)=\{(\sigma,y); \ 0 \le \sigma \le 2y, \ \sigma-y \ge \rho \}.
%\label{def:Sigma}
%\end{eqnarray}
Since $p \nu=1$ and $\nu = \eta$ by $p=p_{S}(3+\mu)$,
it follows from \eqref{dai114} that
\begin{align*}
	\jb{u}(y)\ge C_1 \varepsilon \quad (y \geq1).
\end{align*}
As is shown in Section 4 in \cite{KK22}, it holds that
\begin{equation*}
\langle u \rangle(y) \ge
C_2 \int_{1}^{y} \left(1-\frac{\xi}{y}\right)
\frac{[\langle u \rangle(\xi)]^p}
{\xi^{p \eta}}\, d\xi, \ \ y \ge 1.
\end{equation*}
Therefore, using Lemma \ref{lem:boi-1} with $\alpha=1$, $\beta=0$, $\theta=p \eta =1$, we get
\begin{align*}
	T_{*}(\varepsilon) \leq \exp(C \varepsilon^{-(p-1)}).
\end{align*}
Since $T(\varepsilon) \leq T_{*}(\varepsilon)$, we obtain the desired estimate.
Summing up the conclusions in (i), (ii) and (iii), we finish
the proof of Theorem \ref{bw1}.
%%%%%%%

\vspace{3mm}
%\noindent
%{\bf Acknowledgement}

%\bibliographystyle{amsplain}
%\bibliographystyle{\alpha}
%\input{damped_1.bbl}
%\bibliography{DWP30(Arxiv)}
%\input{.bbl}

\end{document}